\documentclass[11pt]{article}

\usepackage{url}
\usepackage{hyperref}
\usepackage{amssymb}
\usepackage{amsmath}
\usepackage{theorem}
\usepackage{epsfig}
\usepackage{verbatim}
\usepackage{graphicx}
\usepackage{subfigure}
\usepackage{cancel}
\usepackage{epstopdf}
\usepackage{a4wide}
\usepackage{pdflscape}

\usepackage{color}

\textwidth 155mm \evensidemargin 0.5cm \oddsidemargin 0.5cm
\textheight 21cm 

\DeclareMathOperator\arcsinh{arcsinh}

\newtheorem{theorem}{Theorem}[section]
\newtheorem{lemma}[theorem]{Lemma}

\newtheorem{prop}[theorem]{Proposition}
\newtheorem{corollary}[theorem]{Corollary}

\newtheorem{rem}[theorem]{Remark}

\newcommand{\RR}{\mathbb{R}}

\newcommand{\N}{\mathbb{N}}
\newcommand{\T}{\mathbb{T}}

\newcommand{\supp}{{\rm supp}\thinspace}

\newcommand{\al}{\alpha}

\newcommand{\ep}{\varepsilon}

\newcommand{\trho}{\tilde{\rho}}

\newcommand{\pa}{\partial}

\newcommand{\inti}{\int_{-\infty}^\infty}
\newcommand{\intpi}{\int_{-\pi}^\pi}

\newenvironment{proof}{\begin{trivlist} \item[] {\em Proof:}}{\hfill $\Box$
                       \end{trivlist}}

\newenvironment{proofsth}[1]{\begin{trivlist} \item[] {\textbf{Proof of #1:}}}{\hfill $\Box$
                       \end{trivlist}}

\hyphenation{vi-ce-ver-sa}

\makeatletter
\renewcommand*\l@section{\@dottedtocline{1}{0em}{1.5em}}
\renewcommand*\l@subsection{\@dottedtocline{2}{1.5em}{2.3em}}
\renewcommand*\l@subsubsection{\@dottedtocline{3}{3.8em}{3.7em}}
\makeatother

\numberwithin{equation}{section}

\allowdisplaybreaks[4]

\begin{document}

\title{Global smooth solutions for the inviscid SQG equation}

\author{Angel Castro, Diego C\'ordoba and Javier G\'omez-Serrano}

\maketitle

\begin{abstract}

In this paper, we show the existence of the first non trivial family of classical global solutions of the inviscid surface quasi-geostrophic equation.

\vskip 0.3cm

\textit{Keywords: global existence, surface quasi-geostrophic, incompressible, computer-assisted}

\end{abstract}

\tableofcontents

\section{Introduction}


We consider the initial value problem for the inviscid surface quasi-geostrophic equation (SQG):

\begin{align}
\partial_t \theta (x,t)+ u(x,t) \cdot \nabla \theta(x,t)& = 0, \ \ (x,t) \in \mathbb{R}^2 \times \mathbb{R}_+ \label{sqg} \\
 u(x,t) & = (-R_2(\theta), R_1(\theta))(x,t)\nonumber\\
\theta(x,0)&= \theta_0(x)\nonumber,
\end{align}

where $R_j$ is the $j$-th Riesz transform:

\begin{align*}
 R_j(\theta)(x) = \frac{1}{2\pi}P.V.\int_{\mathbb{R}^{2}} \frac{(x_j - y_j)}{|x-y|^{3}}\theta(y) dy.
\end{align*}

This equation is derived considering small Rossby and Ekman numbers and constant potential vorticity. It models the evolution of the temperature from a general quasi-geostrophic system for atmospheric and oceanic flows (see  \cite{Constantin-Majda-Tabak:formation-fronts-qg,Held-Pierrehumbert-Garner-Swanson:sqg-dynamics,Pedlosky:geophysical,Majda-Bertozzi:vorticity-incompressible-flow} for more details). The numerical and analytical study of the equation was started by Constantin, Majda and Tabak in \cite{Constantin-Majda-Tabak:formation-fronts-qg}, since the SQG system presents an analogy with the 3D Euler equations.

The aim of this paper is to address the main problem of whether its classical solution corresponding to given initial data $\theta(x,0) = \theta_0(x)$ with finite energy exists for all time or not. We remark that both the $L^{p}$ norms of theta $(1 \leq p \leq \infty)$ and the $L^{2}$ norm of $u$ (the energy of the system) are conserved quantities. Moreover, the $L^{p}$ norms of $u$ obey the following bounds:
$$||u(\cdot,t)||_{L^p}\leq C_p ||\theta_0||_{L^p}\quad 1<p<\infty.$$

Local existence of solutions for \eqref{sqg} was first shown in \cite{Constantin-Majda-Tabak:formation-fronts-qg} in Sobolev spaces. By using different functional frameworks local existence has been also addressed in several papers, see for example \cite{Chae:qg-equation-triebel-lizorkin,Li:existence-theorems-2d-sqg-plane-waves,Wu:qg-equations-morrey-spaces,Wu:solutions-2d-qg-holder}.

Resnick, in his thesis \cite{Resnick:phd-thesis-sqg-chicago}, showed global existence of weak solutions in $L^{2}$ using an extra cancellation due to the oddness of the Riesz transform. Marchand \cite{Marchand:existence-regularity-weak-solutions-sqg} extended Resnick's result to the class of initial data belonging to 
$L^{p}$ with $p > 4/3$. The question of non-uniqueness for weak solutions in $L^2$ is still a challenging open problem (see \cite{Isett-Vicol:holder-continuous-active-scalar,Azzam-Bedrossian:bmo-uniqueness-active-scalar-equations,Rusin:logarithmic-spikes-uniqueness-weak-solution-active-scalars} and references therein).

The problem of whether the SQG system presents finite time singularities or there is global existence is open for the smooth case. Kiselev and Nazarov \cite{Kiselev-Nazarov:simple-energy-pump-sqg} constructed solutions that started arbitrarily small but grew arbitrarily big in finite time, and Friedlander and Shvydkoy \cite{Friedlander-Shvydkoy:unstable-spectrum-sqg} showed the existence of unstable eigenvalues of the spectrum. Castro and C\'ordoba constructed singular solutions with infinite energy in \cite{Castro-Cordoba:infinite-energy-sqg} and Dritschel \cite{Dritschel:exact-rotating-solution-sqg} constructed global solutions that have $C^{1/2}$ regularity.


The numerical simulations in \cite{Constantin-Majda-Tabak:formation-fronts-qg} indicated a possible singularity in the form of a hyperbolic saddle closing in finite time. Ohkitani and Yamada \cite{Ohkitani-Yamada:inviscid-limit-sqg} and Constantin et al \cite{Constantin-Nie-Schorghofer:nonsingular-sqg-flow} suggested that the growth was double exponential. The question was settled by Cordoba \cite{Cordoba:nonexistence-hyperbolic-blowup-qg} who bounded the growth by a quadruple exponential and further improved by Cordoba and Fefferman \cite{Cordoba-Fefferman:growth-solutions-qg-2d-euler} to a double exponential (see also \cite{Deng-Hou-Li-Yu:non-blowup-2d-sqg}). The same scenario was recomputed almost 20 years with bigger computational power and improved algorithms by Constantin et al. \cite{Constantin-Lai-Sharma-Tseng-Wu:new-numerics-sqg}, yielding no evidence of blowup and the depletion of the hyperbolic saddle past the previously computed times. In \cite{Majda-Tabak:2d-model-sqg}, Majda and Tabak compared simulations for the SQG scenario with the Euler case. Scott, in \cite{Scott:scenario-singularity-quasigeostrophic}, starting from elliptical configurations, proposed a candidate that develops filamentation and after a few cascades, blowup of $\nabla \theta$.

Several criteria of blowup have been found and blowup can only occur through the blowup of either some geometric quantities or certain space-time norms. For more details see \cite{Chae-Constantin-Wu:deformation-symmetry-inviscid-sqg-3d-euler, Chae:continuation-principles-euler-sqg, Chae:geometric-approaches-singularities-inviscid-fluid-flows, Chae:behavior-solution-euler-related, Cordoba-Fefferman:scalars-convected-2d-incompressible-flow, Hou-Shi:dynamic-growth-vorticity-3d-euler-sqg, Ju:geometric-constraints-global-regularity-sqg,Cannone-Xue:self-similar-solutions-sqg}.

A different approach for the study of the formation of singularities for SQG comes from the patch-problem, i.e., ``sharp fronts'' . In this problem one considers that the scalar $\theta(x,t)$ is the characteristic function of some compact and simple connected domain which depends on time and with smooth boundary. Local existence for the patch was proven by Rodrigo \cite{Rodrigo:evolution-sharp-fronts-qg} for a $C^\infty$ boundary and by Gancedo \cite{Gancedo:existence-alpha-patch-sobolev} in Sobolev spaces. C\'ordoba et al. found,  in  \cite{Cordoba-Fontelos-Mancho-Rodrigo:evidence-singularities-contour-dynamics}, strong numerical evidences of the formation of a singularity in the boundary of the patch. For further numerical simulations addressing formation of singularities see \cite{Scott-Dritschel:self-similar-sqg} and \cite{Scott:scenario-singularity-quasigeostrophic}. The possibility of a splash singularity scenario (i.e. when the interface touches itself on a point but the curve does not lose regularity) was ruled out by Gancedo and Strain \cite{Gancedo-Strain:absence-splash-muskat-SQG}.

Through a different motivation, Cordoba et al. \cite{Cordoba-Fefferman-Rodrigo:almost-sharp-fronts-sqg}, Fefferman and Rodrigo \cite{Fefferman-Rodrigo:almost-sharp-fronts-sqg} and Fefferman et al. \cite{Fefferman-Luli-Rodrigo:spine-sqg-almost-sharp-front} studied the existence of a special type of solutions that are known as ``almost sharp fronts'' for SQG. These solutions can be thought of as a regularization of a front, with a small strip around the front in which the solution changes (reasonably) from one value of the front to the other. These are strong solutions of the equation with large gradient ($\sim \text{(Width of the strip)}^{-1}$).

The main purpose of the paper is to show the following theorem:

\begin{theorem}
\label{globalsqg}
 There is a nontrivial global smooth solution for the SQG equations that has finite energy.
\end{theorem}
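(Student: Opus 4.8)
The plan is to produce a \emph{uniformly rotating} solution: one of the form $\theta(x,t)=\Theta(R_{\Omega t}\,x)$, where $R_\beta$ is the rotation of angle $\beta$ in $\RR^2$ and $\Omega\in\RR$ is a fixed angular velocity. Since $\Lambda^{-1}$ is a radial Fourier multiplier and the Riesz transforms transform covariantly under rotations, such a $\theta$ is automatically an exact — and manifestly global in time — solution of \eqref{sqg} as soon as its profile $\Theta$ solves the stationary ``relative'' equation $\nabla^{\bot}\Psi\cdot\nabla\Theta=0$, where $u=\nabla^{\bot}\Lambda^{-1}\theta$ and $\Psi=\Lambda^{-1}\Theta-\tfrac{\Omega}{2}|x|^2$ is the stream function in the rotating frame. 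Every radial profile solves this equation for every $\Omega$; these are the trivial solutions, and the aim is to bifurcate from them a branch of genuinely non-radial (hence honestly time-dependent) profiles, starting from a carefully chosen compactly supported smooth radial profile $\bar\Theta(|x|)$ that is strictly monotone on an annulus, so that its intermediate level sets are circles.

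Concretely, I would encode the sought solution through its level sets: for $\lambda$ in the range of $\bar\Theta$ on the annulus, describe $\{\Theta=\lambda\}$ in polar form as $r=\bar r(\lambda)+g(\alpha,\lambda)$, so that $g\equiv 0$ is the trivial branch. Writing $\Theta$ as the resulting superposition of nearly-circular ``patches'' and imposing that the relative velocity $\nabla^{\bot}\Psi$ be tangent to each level curve turns $\nabla^{\bot}\Psi\cdot\nabla\Theta=0$ into a single nonlinear functional equation $F(\Omega,g)=0$ for $g$ in a Banach space of $m$-fold symmetric functions of $(\alpha,\lambda)$ of suitable regularity, with $F(\Omega,0)=0$ for all $\Omega$. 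The first technical point is to verify that $F$ is well defined and $C^1$ between the chosen spaces; this is genuinely more delicate than the 2D Euler vortex-patch case, because $\Lambda^{-1}$ gains only one derivative, so the Biot--Savart kernel along and across the level curves is truly singular, and the functional-analytic framework — together with the fact that $\Theta$ is a \emph{smooth} profile rather than a sharp patch — must be arranged so as to absorb this loss.

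I would then compute the linearization $\partial_g F(\Omega,0)$ about the radial branch. Expanding in angular Fourier modes $e^{im\alpha}$, it becomes block-diagonal, each block acting on the radial variable by an explicit integral operator built from the SQG kernel; carrying out the angular integrations produces integrals of products of special functions (complete elliptic integrals / Gauss hypergeometric functions), so the statement ``$\Omega$ is a bifurcation value for the mode $m$'' becomes a transcendental condition on the pair $(\Omega,m)$. \textbf{This is the main obstacle, and the reason the argument is computer-assisted:} one must prove rigorously that there exist a mode $m$ and a value $\Omega=\Omega_m$ for which the kernel of $\partial_g F(\Omega_m,0)$ is exactly one-dimensional (spanned by a single mode-$m$ function), the operator is Fredholm of index zero there, and the transversality condition $\partial_\Omega\partial_g F(\Omega_m,0)[h_0]\notin\mathrm{Range}\,\partial_g F(\Omega_m,0)$ holds. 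Establishing these spectral facts calls for rigorous (interval-arithmetic) enclosures of the relevant special-function integrals, supplemented by analytic tail estimates showing that the high modes $|m|\gg1$ and the remainder of the radial operator cannot spoil the one-dimensionality.

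Granting this spectral input, I would invoke a Crandall--Rabinowitz bifurcation argument to obtain a $C^1$ curve $s\mapsto(\Omega(s),g(s))$ of solutions of $F=0$ with $g(0)=0$ and $g(s)\neq 0$ for $0<|s|$ small; thus $g(s)$ is a nontrivial mode-$m$ deformation of the circular level sets, the corresponding $\Theta$ is non-radial, and $\theta(x,t)=\Theta(R_{\Omega(s)t}x)$ is a genuine, non-stationary global solution. Finally I would upgrade regularity by bootstrapping the (elliptic) profile equation to get $\Theta\in C^\infty$, and observe that since $\Theta$ is compactly supported we have $\theta\in L^2$ while $u=\nabla^{\bot}\Lambda^{-1}\theta$ decays like $|x|^{-2}$ at infinity, so $u\in L^2(\RR^2)$; hence the constructed solution is smooth, nontrivial, global, and of finite energy, which proves Theorem \ref{globalsqg}.
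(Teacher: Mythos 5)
Your plan is essentially the paper's plan: a uniformly rotating level-set ansatz, bifurcation from a compactly supported monotone radial profile via Crandall--Rabinowitz, $m$-fold symmetric function spaces (the paper takes $m=3$), computer-assisted enclosures of the spectral quantities to establish a simple eigenvalue, Fredholmness, and transversality, and finite energy from compact support. The parameterization by radial variable ($\rho$ in the paper, your $\lambda$) and angle, the mode-by-mode diagonalization of the linearization, and the reduction to integrals of elliptic/hypergeometric type are all the same ingredients.

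The one place where you overreach is the final sentence claiming to ``upgrade regularity by bootstrapping the (elliptic) profile equation to get $\Theta\in C^\infty$.'' That step would not go through: the regularity of the constructed solution is capped by the regularity of the prescribed radial profile $f$, which in the paper is an \emph{explicit} $C^4$ function (needed so the quadratures in the computer-assisted part can be carried out), and the paper correspondingly only obtains $H^{4,3}$ level sets, i.e.\ finite (roughly $C^2$) regularity. The relative stream-function equation is a degenerate transport/integro-differential equation, not a classical elliptic PDE, and bootstrapping it cannot produce more derivatives of $\Theta$ than $f$ possesses. The paper's own remark makes exactly this point: higher regularity could in principle be obtained by choosing a smoother explicit profile and larger $H^{k,l}$ spaces, at the cost of redoing the rigorous numerics, but that is a different construction, not a bootstrap. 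Apart from this, your proposal matches the paper's proof.
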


It is well known that radial functions are stationary solutions for \eqref{sqg} due to the structure of the nonlinear term. The solutions that will be constructed in this paper are a smooth perturbation in a suitable direction of a specific radial function. The smooth profile we will perturb satisfies (in polar coordinates)
\begin{align*}
\theta(r)\equiv \left\{ \begin{array}{ccc} 1 & \text{for $0\leq r \leq 1-a$}\\ \text{smooth and decreasing} & \quad \text{for $1-a < r< 1$}  \\ 0 & \text{for $1\leq r <\infty$} \end{array}\right.,
\end{align*}
where $a$ is a small number (below we will impose some more constraints in this profile). In addition the dynamics of these solutions consist of global rotating level sets with constant angular velocity. These level sets are a perturbation of the circle. The limit case $a=0$ gives rise to the well known V-state solution for SQG, i.e., a global rotating patch which solves weakly \eqref{sqg}. The existence of V-states with $C^\infty$ boundary for SQG was proven in \cite{Castro-Cordoba-GomezSerrano:existence-regularity-vstates-gsqg}. It was shown in \cite{Castro-Cordoba-GomezSerrano:analytic-vstates-ellipses} that the boundary of these solutions is actually  analytic.

The proofs of these results are motivated by the ones for 2D incompressible Euler in the simply connected case; Burbea in \cite{Burbea:motions-vortex-patches} proved the existence of V-states for Euler and $C^\infty$-regularity for its boundary was proved by Hmidi at al. in \cite{Hmidi-Mateu-Verdera:rotating-vortex-patch}  (see also \cite{Hassainia-Hmidi:v-states-generalized-sqg}).

The paper is organized as follows: section \ref{sectionequations} is devoted to the reformulation of the equations \eqref{sqg}  in new variables.  In section \ref{CR} we state the main theorem and present the Crandall-Rabinowitz (C-R) theorem which will be the main tool in our proof. In section \ref{checking} we check that our equation satisfies the hypotheses of the C-R theorem. This will be the main part of our work.

In particular section \ref{checking3} is different from previous analysis. We stress the following main differences:

\begin{itemize}
\item The study of the linear problem is now reduced to a functional equation, as opposed to a scalar equation (which was in the patch case). Even the existence of nontrivial elements in the kernel of the linear part is not evident a priori.

 \item There is no algebraic formula for neither the eigenvalue nor the eigenvector, not even in an implicit way (such as in \cite{Castro-Cordoba-GomezSerrano:analytic-vstates-ellipses}). This makes the proof of the dimensionality of the kernel much harder since one needs to show that the eigenvalue is simple and have some control of the rest of the eigenvalues.
\end{itemize}

\begin{rem}In a forthcoming paper  \cite{Castro-Cordoba-GomezSerrano:uniformly-rotating-smooth-euler}, by using the same techniques, we are able to extend our construction to the 2D Incompressible Euler equations.
\end{rem}

Finally, in Appendix \ref{sectionasymptotics}, we compute the asymptotics and bounds on the error terms of some of the elliptic integrals that appear and Appendix \ref{sectioncomputerassisted} is devoted to discuss the details of the computer-assisted code and its implementation, as well as to show the rigorous numerical bounds used in the theorems. Appendix \ref{appendixprojections} contains an explicit description of two big matrices used in the proofs.

A major theme of our work is the interplay between rigorous computer calculations and traditional mathematics. We use interval arithmetics as part of a proof whenever they are needed.

Advances in computing power have made rigorous computer-assisted proofs realizable.  Naturally, floating-point operations can result in numerical errors. In order to overcome these, we will employ interval arithmetics to deal with this issue. The main paradigm is the following: instead of working with arbitrary real numbers, we perform computations over intervals which have representable numbers by the computer as endpoints in order to guarantee that the true result at any point belongs to the interval by which is represented. On these objects, an arithmetic is defined in such a way that we are guaranteed that for every $x \in X, y \in Y$
\begin{align*}
x \star y \in X \star Y,
\end{align*}
for any operation $\star$. For example,
\begin{align*}
[\underline{x},\overline{x}] + [\underline{y},\overline{y}] & = [\underline{x} + \underline{y}, \overline{x} + \overline{y}] \\
[\underline{x},\overline{x}] \times [\underline{y},\overline{y}] & = [\min\{\underline{x}\underline{y},\underline{x}\overline{y},\overline{x}\underline{y},\overline{x}\overline{y}\},\max\{\underline{x}\underline{y},\underline{x}\overline{y},\overline{x}\underline{y},\overline{x}\overline{y}\}].
\end{align*}
We can also define the interval version of a function $f(X)$ as an interval $I$ that satisfies that for every $x \in X$, $f(x) \in I$. Rigorous computation of integrals has been theoretically developed since the seminal works of Moore and many others (see \cite{Berz-Makino:high-dimensional-quadrature,Cordoba-GomezSerrano-Zlatos:stability-shifting-muskat-II,Kramer-Wedner:adaptive-gauss-legendre-verified-computation,Lang:multidimensional-verified-gaussian-quadrature,Moore-Bierbaum:methods-applications-interval-analysis,Tucker:validated-numerics-book} for just a small sample). In our computations, all arithmetic will be double precision (64 bits).

\section{The equations}
\label{sectionequations}

In this section we describe the equation that  a global  rotating solution with constant angular velocity of SQG must satisfy. We will look to the level sets of this solution rather than the solution itself. Let's assume that $\theta(x,t)$ is a smooth solution of \eqref{sqg} with initial data $\theta_0(x)$. On $\theta_0(x)$ we will assume that its level sets can be parameterized by $z_0(\alpha,\rho)$ in such a way that $$\theta_0(z_0(\alpha,\rho))=f(\rho)$$ for some smooth and even
function $f \, : \, \RR\to \RR$. The application $z_0(\alpha,\rho)$ satisfies:
\begin{enumerate}
\item It is one to one from  $\{(\alpha,\rho)\in \RR^2\,:\, -\pi\leq \alpha <\pi, \, 0<\rho<\infty\}$ to $\RR^2\setminus \{0\}$.
\item For all $\alpha$, $z_0(\alpha,0)=(0,0)$.
\item For fixed $\rho>0$, $z^i_0(\alpha,\rho)$, with $i=1,2$, are $2\pi$-periodic and $z_0(\alpha,\rho)$, with $-\pi\leq \alpha <\pi$, is a  closed and $C^1$ curve in $\RR^2$ satisfying the chord-arc condition. Also we parametrize this curve counterclockwise with $\alpha$.
\item It is differentiable with respect to $\alpha$ and $\rho$ in $\T\times [0,\infty)$,  $|z_{0\,\rho}(\alpha,\rho)|>c>0$ in $\T\times [0,\infty)$, $|z_{0\,\alpha}(\alpha,\rho)|>0$ in $\T\times (0,\infty)$ and  $z_{0\,\alpha}^\perp(\alpha,\rho) \cdot z_{0\,\rho}(\alpha,\rho)<0\quad \text{for $-\pi\leq \alpha <\pi$ and $0<\rho<\infty$}.$
\end{enumerate}

Because of the transport character of the equation \eqref{sqg} and by continuity we can assume that the level sets of the solution $\theta(x,t)$ at time $t$ can be parameterized by an application $z(\alpha,\rho,t)$ such that
\begin{align}\theta(z(\al,\rho,t),t) = f(\rho),\label{teta}\end{align}
and it satisfies  properties 1, 3 and 4.

Property 2 changes to $$z(\alpha,0,t)=c(t)\quad \text{for all $\alpha\in \T$},$$
where, here, $c(t)$ is a vector in $\RR^2$.

Differentiating \eqref{teta} with respect to  $\alpha$ and with respect to $\rho$ we have that

\begin{align*}
z_{\al}(\alpha,\rho,t) \cdot \nabla \theta(z(\alpha,\rho,t),t) = 0  \\
z_{\rho}(\alpha,\rho,t) \cdot \nabla \theta(z(\alpha,\rho,t),t) = f_\rho(\rho).
\end{align*}

Therefore

\begin{align}\label{gradiente}
\nabla \theta(z(\alpha,\rho,t)) = \frac{f_\rho(\rho)}{z_{\al}^{\perp} \cdot z_{\rho}(\alpha,\rho,t)}z_{\al}^{\perp}(\alpha,\rho,t),
\end{align}

Taking a time derivative in \eqref{teta} and using \eqref{sqg} and \eqref{gradiente} yields
\begin{align*}
0=&\frac{d }{dt}\theta(z(\alpha,\rho,t),t)=\pa_{t} \theta(z(\al,\rho,t),t) + z_{t}(\alpha,\rho,t) \cdot \nabla \theta(z(\al,\rho,t),t)  \\
=&(-u(z(\al,\rho,t),t) + z_{t}(\alpha,\rho,t) )\cdot \nabla \theta(z(\al,\rho,t),t) \\
=&\left(-u(z(\al,\rho,t),t) + z_{t}(\alpha,\rho,t )\right)\cdot z_{\al}^{\perp}(\alpha,\rho,t) \frac{f_\rho(\rho)}{z_{\al}^{\perp} \cdot z_{\rho}(\alpha,\rho,t)}.
\end{align*}
This last expression is the equation that the level sets $z(\alpha,\rho,t)$ of the solutions $\theta(x,t)$ satisfy.

Notice that since $u(x,t)=R^\perp \theta(x,t)$ we can also write $u(x,t)=-\Lambda^{-1}\nabla^\perp \theta(x,t)$ and therefore
\begin{align*}
u(z(\alpha,\rho,t),t) & = -\frac{1}{2\pi} \int_{\RR^{2}} \frac{1}{|z(\alpha,\rho,t)-y|} \nabla^{\perp} \theta(y,t) dy \\
& = -\frac{1}{2\pi} \int_0^\infty\int_{-\pi}^\pi \frac{1}{|z(\al,\rho,t)-z(\al',\rho',t)|} f_\rho(\rho') z_{\al}(\al',\rho') d\al' d\rho'.\\
\end{align*}
where we just did the change of variables $y=z(\alpha',\rho',t)$ and used \eqref{gradiente}.

Conversely, if $z(\alpha,\rho,t)$ satisfies the equation
\begin{align}\label{leveleq}
&\left(-u(z(\al,\rho,t),t) + z_{t}(\alpha,\rho,t )\right)\cdot z_{\al}^{\perp}(\alpha,\rho,t) \frac{f_\rho(\rho)}{z_{\al}^{\perp} \cdot z_{\rho}(\alpha,\rho,t)}=  0 \\
&z(\alpha,\rho,0)=  z_0(\alpha,\rho)\nonumber
\end{align}
with
\begin{align}
u(z(\alpha,\rho,t),t)=-\frac{1}{2\pi} \int_0^\infty\int_{-\pi}^\pi \frac{1}{|z(\al,\rho,t)-z(\al',\rho',t)|} f_\rho(\rho') z_{\al}(\al',\rho') d\al' d\rho',\label{ulevel}
\end{align}
we can prove that  the  function  $\theta \,:\, \RR^2\times \RR\to \RR $ defined by \eqref{teta}
 is a solution of the equation \eqref{sqg}.

Let us assume now that $\supp(f_\rho) \subset [1-a,1]$, where $0 < a < 1$ will be chosen later. Then, in order to find a solution for SQG, we can solve the equation
\begin{align*}
\left(z_{t}(\alpha,\rho,t) -u(z(\alpha,\rho,t),t)\right) \cdot z_{\al}^{\perp}(\alpha,\rho,t) & = 0,
\end{align*}
with $u(z(\alpha, \rho,t),t)$ given by \eqref{ulevel}, in the domain $\rho \in (1-a,1)$, $\alpha\in \T$. After that we extend the solution $z(\alpha,\rho,t)$ to the domain $0\leq \rho<\infty$, $-\pi\leq \alpha <\pi$ in a smooth way and finally define $\theta(x,t)$ through equation \eqref{teta}.

In addition, we will assume that our solution rotates with angular velocity $\lambda$, counterclockwise. Hence

\begin{align*}
z(\alpha,\rho,t) = \mathcal{O}(t)x(\alpha, \rho),
\quad
\mathcal{O}(t) =
\left(
\begin{array}{cc}
\cos(\lambda t) & -\sin(\lambda t) \\
\sin(\lambda t) & \cos(\lambda t)
\end{array}
\right)
\end{align*}

This implies on one hand

\begin{align*}
z_t(\alpha,\rho,t) = \mathcal{O}_t(t) x(\alpha,\rho) = \lambda
\left(
\begin{array}{cc}
-\sin(\lambda t) & -\cos(\lambda t) \\
\cos(\lambda t) & -\sin(\lambda t)
\end{array}
\right)x(\alpha,\rho), \quad
z_{\al}(\rho,\alpha,t) = \mathcal{O}(t) x_{\al}(\alpha,\rho),
\end{align*}

and

\begin{align*}
&z_{t} \cdot z_{\al}^{\perp}(\alpha,\rho,t)  = \langle \mathcal{O}_t(t) x(\alpha,\rho), (\mathcal{O}(t) x_{\al}(\alpha,\rho))^{\perp} \rangle\\
&= \langle \mathcal{O}_{t}(t) x(\alpha,\rho), \mathcal{O}(t)x_{\al}^{\perp}(\alpha,\rho) \rangle = \langle \mathcal{O}^{T} \mathcal{O}_{t}(t) x(\al,\rho), x_{\al}^{\perp}(\al,\rho) \rangle
= -\lambda \langle x(\alpha,\rho), x_{\al}(\alpha,\rho) \rangle.
\end{align*}

On the other hand from \eqref{ulevel} we see that
\begin{align*}
u(z(\alpha,\rho,t),t)=-\mathcal{O}\frac{1}{2\pi} \int_0^\infty\int_{-\pi}^\pi \frac{1}{|x(\al,\rho)-x(\al',\rho')|} f_\rho(\rho') x_{\al}(\al',\rho') d\al' d\rho'\equiv \mathcal{O}\overline{u}(x(\alpha,\rho)).
\end{align*}
Thus
\begin{align*}
\langle u(z(\alpha,\rho,t),t), z_{\al}^{\perp}(\alpha,\rho,t) \rangle = \langle \mathcal{O} (t)\overline{u}(x(\alpha,\rho)), \mathcal{O}(t) x_{\al}^{\perp}(\alpha,\rho) \rangle = \langle \overline{u}(x(\alpha,\rho)), x_{\al}^{\perp} \rangle,
\end{align*}

which yields having to solve, for the pair $(x(\alpha,\rho), \lambda)$,

\begin{align}\label{equationx}
-\lambda x (\alpha,\rho)\cdot x_{\al}(\alpha,\rho) + \frac{1}{2\pi} x_{\al}^{\perp}(\alpha,\rho) \cdot \int_{0}^\infty \int_{-\pi}^\pi \frac{f_\rho(\rho')}{|x(\al,\rho)-x(\al',\rho')|} x_{\al}(\al',\rho') d\al' d\rho'=0.
\end{align}
We now write $x(\al,\rho)$ in polar coordinates
\begin{align}
x(\alpha,\rho) & = r(\al,\rho)(\cos(\al),\sin(\al)). \label{x}
\end{align}
This choice restricts the class of functions we are considering. However this restriction   will not be strong enough and we will be able to find a solution.

We have the following relations:
\begin{align*}
x_{\al}(\al,\rho)  & = r_{\al}(\al,\rho)(\cos(\al),\sin(\al)) + r(\al,\rho)(-\sin(\al),\cos(\al)) = r_{\al}(\al,\rho) n(\al) + r(\al,\rho) t(\al) \\
x_{\al}^{\perp}(\al,\rho)  & = r_{\al}(\al,\rho) n^{\perp}(\al) + r(\al,\rho) t^{\perp}(\al) = r_{\al}(\al,\rho) t(\al) - r(\al,\rho) n(\al)\\
x_{\al}(\al',\rho') \cdot x_{\al}^{\perp}(\al,\rho) & = r_{\al}(\al,\rho)r_{\al}(\al',\rho') n(\al') \cdot t(\al) - r_{\al}(\al',\rho')r(\al,\rho) n(\al) \cdot n(\al') \\
& + r_{\al}(\al,\rho)r(\al',\rho') t(\al') \cdot t(\al) - r(\al,\rho)r(\al',\rho') n(\al) \cdot t(\al'),
\end{align*}
where
\begin{align*}
n(\al) \cdot n(\al') & = \cos(\al-\al') \\
t(\al) \cdot t(\al') & = \cos(\al-\al') \\
n(\al') \cdot t(\al) & = -\sin(\al-\al') \\
n(\al) \cdot t(\al') & = \sin(\al-\al').
\end{align*}
Moreover, we have that
\begin{align*}
x(\al,\rho) \cdot x_{\al}(\al,\rho) = r(\al,\rho) n(\al) \cdot (r_{\al}(\al,\rho)n(\al) + r(\al,\rho) t(\al)) = r(\al,\rho) r_{\al}(\al,\rho).
\end{align*}
Therefore, equation \eqref{equationx} reads
\begin{align}\label{rotequation}
F[r,\,\lambda]=0 \quad \text{in $\rho\in (1-a,1)$, $\alpha\in\T$},
\end{align}
with
\begin{align}\label{functionalF}
&F[r,\,\lambda] \\&\equiv
\lambda r_{\al}(\al,\rho)  - \frac{1}{2\pi} \int_0^\infty \int_{-\pi}^\pi \frac{f_\rho(\rho')}{|x(\al,\rho)-x(\al',\rho')|}\cos(\al-\al')(r_{\al}(\al',\rho')-r_{\al}(\al,\rho)) d\al' d\rho' \nonumber\\
& + \frac{r_{\al}(\al,\rho)}{2\pi r(\al,\rho)} \int_0^\infty \int_{-\pi}^\pi \frac{f_\rho(\rho')}{|x(\al,\rho)-x(\al',\rho')|}\cos(\al-\al')(r(\al',\rho')-r(\al,\rho)) d\al' d\rho'\nonumber \\
& - \frac{1}{2\pi r(\al,\rho)} \int_0^\infty \int_{-\pi}^\pi \frac{f_\rho(\rho')}{|x(\al,\rho)-x(\al',\rho')|}\sin(\al-\al')(r(\al,\rho)r(\al',\rho') + r_{\al}(\al,\rho)r_{\al}(\al',\rho')) d\al' d\rho',\nonumber
\end{align}
where $x(\alpha,\rho)$ is given by \eqref{x} and we have added and subtracted $$\frac{r_{\al}(\al,\rho)}{2\pi r(\al,\rho)} \int_0^\infty \int_{-\pi}^\pi \frac{f_\rho(\rho')}{|x(\al,\rho)-x(\al',\rho')|}\cos(\al-\al')r(\al,\rho) d\al' d\rho'$$ for cosmetic reasons.

The rest of the paper consists in finding a nontrivial solution  $(r(\alpha,\rho),\lambda)$ of \eqref{rotequation} and with $r_\rho(\alpha,\rho)>c>0.$

\section{Main theorem and Crandall-Rabinowitz (C-R) theorem}\label{CR}
This section is devoted to state the main theorem of the paper. But firstly, we will fix the function $f(\rho)$. The derivative of this  function will be given by the expression
\begin{align}\label{definitiondef}
 &\frac{a}{2}f_\rho\left(\rho\right) \\&=
\left\{
\scriptsize
\begin{array}{cc}
 -\frac{1}{2\beta^{9}}(126 \beta^4 - 420 \beta^3 (1 + \tilde{\rho}) + 540 \beta^2 (1 + \tilde{\rho})^2 - 315 \beta (1 + \tilde{\rho})^3 + 70 (1 + \tilde{\rho})^4)(1 + \tilde{\rho})^5 & \text{ if } -1 \leq \tilde{\rho} \leq -1 + \beta \\
 -0.5 & \text{ if } -1 + \beta \leq \tilde{\rho} \leq 1 - \beta \\
\frac{1}{2\beta^{9}}(126 \beta^4 + 420 \beta^3 (-1 + \tilde{\rho}) + 540 \beta^2 (-1 + \tilde{\rho})^2 + 315 \beta (-1 + \tilde{\rho})^3 + 70 (-1 + \tilde{\rho})^4) (-1 + \tilde{\rho})^5 & \text{ if } 1 - \beta \leq \tilde{\rho} \leq 1 \\
\end{array}
\right.\nonumber
\end{align}
where $\beta$ will be chosen later and $\tilde{\rho} = \frac{2}{a}(\rho-1)+1$. We take $f(1)=0$. This expression will be used in order to compute some integrals where $f_\rho(\rho)$ arises. Next we describe the main properties of this function:
\begin{enumerate}
\item The function $f(\rho)$ defined in $(1-a,1)$ admits a $C^4$-extension (we still call it $f$) to $\RR$. This extension is given by
\begin{align*}
f(\rho)=\left\{\begin{array}{ccc} 1 & \rho\leq 1-a\\ f(\rho) & 1-a< \rho <1 \\ 0 & \rho \geq 1\end{array}\right\}.
\end{align*}
\item It is strictly decreasing in $(1-a,1)$.
\item The derivative $f_\rho(\rho)$ is  constant for $1-a+\frac{a\beta}{2}\leq \rho \leq 1-\frac{a\beta}{2}$.
\end{enumerate}

\begin{rem}
Due to the computer-assisted nature of some parts of the proof, the choice of $f(\rho)$ and all the parameters of the problem $(n,a,\beta)$ need to be explicit. A similar strategy works for other $n$-fold symmetric solutions, and more regular solutions can be obtained by choosing more regular (explicit) profiles $f$ if the computer-assisted parts of the proof yield suitable numbers.
\end{rem}

\begin{theorem}\label{main} Let $a = 0.05$ and $\beta= \frac{2}{512} = 2^{-8}$ and consider the domain $$\Omega_a\equiv \{ (\alpha,\, \rho)\,:\,\alpha\in \T,\, 1-a<\rho<1\}$$ and  $f\in C^{4}([1-a,1])$ as in \eqref{definitiondef}. Then there exists a branch of nontrivial smooth solutions, with $3-$fold symmetry, of equation \eqref{rotequation}, in $H^{4,3}(\Omega_a)$, bifurcating from $r(\alpha,\rho)=\rho$ and  $\lambda=\lambda_3$ for some $\lambda_3\in \RR$.
\end{theorem}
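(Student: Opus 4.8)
The plan is to apply the Crandall--Rabinowitz bifurcation theorem to the functional $F[r,\lambda]$ defined in \eqref{functionalF}, viewed as a map between suitable Banach spaces of $3$-fold symmetric functions on $\Omega_a$. The first step is to set up the functional-analytic framework: choose as domain space a ball around the trivial solution inside $H^{4,3}(\Omega_a)$ (functions with four $\alpha$-derivatives and three $\rho$-derivatives in $L^2$, restricted to the $3$-fold symmetric sector), choose the target space so that $F$ maps smoothly between them, and verify that $F[\rho,\lambda]=0$ for all $\lambda$, i.e. that $r(\alpha,\rho)=\rho$ (the radial profile) is a curve of trivial solutions. One must check that $F$ is $C^1$ (indeed smoother) in a neighborhood of $(\rho,\lambda)$; this requires controlling the singular kernel $1/|x(\alpha,\rho)-x(\alpha',\rho')|$ and its variations, using the chord-arc condition and the asymptotics of the elliptic integrals collected in Appendix \ref{sectionasymptotics} so that the Fréchet derivatives land in the right space with no loss of regularity.

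The second step is the analysis of the linearized operator $\mathcal{L}_\lambda \equiv D_r F[\rho,\lambda]$. Linearizing about $r=\rho$ and expanding a $3$-fold symmetric perturbation $h(\alpha,\rho)=\sum_{k} h_{3k}(\rho)\cos(3k\alpha)$ (or the sine counterpart), the rotational symmetry diagonalizes $\mathcal{L}_\lambda$ into Fourier modes in $\alpha$; each mode $k$ produces not a scalar multiplier but an operator acting on the $\rho$-profile $h_{3k}(\rho)$ through an integral kernel in $(\rho,\rho')$ built from $f_\rho$ and the Fourier coefficients of the Newtonian-type kernel. The key sub-steps are: (i) show that for the bifurcation mode $k=1$ there is a value $\lambda=\lambda_3$ for which this functional equation has a nontrivial solution, so $\ker\mathcal{L}_{\lambda_3}$ is nonempty --- as the authors emphasize in section \ref{checking3}, this is genuinely a functional (not scalar) eigenvalue problem, so one must actually produce an eigenfunction, e.g. by a Fredholm/compactness argument combined with a rigorous numerical computation of the dominant eigenvalue; (ii) show the kernel is exactly one-dimensional, which needs both simplicity of $\lambda_3$ within the $k=1$ block and the exclusion of kernel elements in all other modes $k\neq 1$, i.e. quantitative lower bounds on the remaining eigenvalues; (iii) verify the codimension-one range condition and the transversality (non-degeneracy) condition $\partial_\lambda \mathcal{L}_\lambda|_{\lambda_3}\,v_0\notin \operatorname{Ran}\mathcal{L}_{\lambda_3}$, which here reduces to checking that the derivative of the relevant eigenvalue branch in $\lambda$ is nonzero at $\lambda_3$.

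The third step is to invoke Crandall--Rabinowitz: having established that $F$ is smooth, that $\mathcal{L}_{\lambda_3}$ is Fredholm of index zero with one-dimensional kernel spanned by some $v_0$ and codimension-one range, and that the transversality condition holds, the theorem yields a local $C^1$ curve $s\mapsto (r(s),\lambda(s))$ of solutions of \eqref{rotequation} with $r(0)=\rho$, $\lambda(0)=\lambda_3$, and $r(s)=\rho+s v_0+o(s)$ nontrivial for small $s\neq 0$. A final bootstrapping/elliptic-regularity argument upgrades these $H^{4,3}$ solutions to smooth ones (the kernel is smooth because the mode $v_0$ solves an integral equation with smooth-in-$\rho$ data away from the singularity), and the condition $r_\rho>c>0$ is inherited by continuity from $r=\rho$ for $s$ small; then the construction of section \ref{sectionequations} turns $(r,\lambda)$ into a genuine rotating solution $\theta(x,t)$ of \eqref{sqg}, which has finite energy since $\theta$ is bounded and compactly supported, proving Theorem \ref{globalsqg}.

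I expect the main obstacle to be step (ii): establishing that $\dim\ker\mathcal{L}_{\lambda_3}=1$. Unlike the patch case, where the linear problem is a scalar equation and the spectrum is explicit, here each Fourier mode carries an infinite-dimensional operator on the $\rho$-variable with no closed-form eigenvalues or eigenvectors, so one cannot read off simplicity by inspection; one must instead combine rigorous (interval-arithmetic) numerical bounds on the dominant part of the spectrum of the $k=1$ block with analytic tail estimates showing no other crossing occurs, and simultaneously prove uniform spectral gaps for all $k\neq 1$. Controlling the error terms of the elliptic integrals uniformly in $(\rho,\rho')$ on the strip $\supp f_\rho\subset[1-a,1]$, and making the numerical eigenvalue bounds tight enough to separate $\lambda_3$ from the rest, is the crux of the computer-assisted part of the argument.
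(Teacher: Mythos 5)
Your proposal follows essentially the same route as the paper: reformulate the rotating-solution equation in polar coordinates, recast it as a Crandall--Rabinowitz bifurcation problem between the anisotropic spaces $H^{4,3}_{3,\text{even}}(\Omega_a)$ and $H^{3,3}_{3,\text{odd}}(\Omega_a)$, diagonalize the linearization in $\alpha$-Fourier modes, and establish the one-dimensional kernel and the spectral gaps by a computer-assisted perturbation of the symmetric part of the mode-$3$ $\rho$-operator combined with Lax--Milgram and uniform coercivity estimates for the modes $3m$ with $m>1$. One minor imprecision worth noting: the codimension-one range condition and the transversality check are not verified by examining an eigenvalue derivative in $\lambda$, but by constructing the adjoint eigenfunction $B^3_*$ of $\Theta^{3*}$ and showing $\langle B^3, B^3_*\rangle>0$ (Propositions \ref{btresadj}--\ref{codimension} and Lemma \ref{BBstar}), which itself relies on the same rigorous interval-arithmetic bounds.
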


\begin{rem} Nontrivial solutions means that the function $r(\alpha, \rho)$ depends on $\alpha$ in a nontrivial way. See Definition \ref{Hkl} for a precise definition of the space $H^{k,l}({\Omega_a})$.
\end{rem}

From section \ref{sectionequations} is clear that Theorem \ref{globalsqg} follows from Theorem \ref{main}.

The proof of Theorem \ref{main} relies on the Crandall-Rabinowitz theorem. We recall here the statement of this theorem from \cite{Crandall-Rabinowitz:bifurcation-simple-eigenvalues} for expository purposes.

\begin{theorem}[Crandall-Rabinowitz] Let $X$, $Y$ be Banach spaces, $V$ a neighborhood of
0 in $X$ and
\begin{align*}
F \,:\, &   V\times (-1,\,1) \rightarrow Y\\
& \,\qquad (r,\,\mu)\,\,\,\,\rightarrow F[r,\,\mu]
\end{align*}
have the properties
\begin{align}
\emph{$F[0,\, \mu] = 0$ for any $|\mu| < 1$.} \\
\emph{The partial derivatives $\pa_\mu F$, $\pa_r F$ and $\pa^2 _{\mu r}F$ exist and are continuous.} \\
\emph{$\mathcal{N}(\pa_r F[0, 0])$ and $Y/\mathcal{R}(\pa_r F[0, 0])$ are one-dimensional.} \\
\emph{$\pa^2_{\mu r} F[0, 0]r_0\not\in \mathcal {R}(\pa_r F[0, 0])$, where $\mathcal{N}(\pa_r F[0, 0]) = \text{span }r_0.$} \label{condTransversality}
\end{align}
(Here $\mathcal{N}$ and $\mathcal{R}$ denote the kernel and range respectively). If $Z$ is any complement of $\mathcal{N}(\pa_r F(0, 0))$ in $X$, then there is a neighborhood $U$ of (0, 0) in
$X \times \RR$, an interval $(-b, b)$, and continuous functions
\begin{align*}
\phi\, : \, (-b, b) \rightarrow \RR && \psi\, :\, (-b, b) \rightarrow Z\end{align*}
such that $\phi(0) = 0, \psi(0) = 0$ and
$$F^{-1}(0)\cap U=\{ (\xi r_0+\xi\psi(\xi), \phi(\xi))\,:\, |\xi|<b\}\cup \{(0,t)\,:\, (0,t)\in U\}.$$
\end{theorem}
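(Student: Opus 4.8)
\textbf{Plan of proof of Theorem \ref{main}.}
The strategy is to apply the Crandall--Rabinowitz theorem with $X = H^{4,3}(\Omega_a)$ (restricted to the $3$-fold symmetric subspace), $Y = H^{3,3}(\Omega_a)$ in the same symmetry class, $\mu = \lambda$, and $F$ the functional defined in \eqref{functionalF}. The trivial branch is $r(\alpha,\rho)=\rho$, which corresponds to the radial profile, and one checks directly that $F[\rho,\lambda]=0$ for every $\lambda$ since all $\alpha$-derivatives and all $\sin(\alpha-\alpha')$-odd contributions vanish on a radial configuration; after the change of variable $r = \rho + g$ this becomes the condition $F[0,\lambda]=0$ required in the first hypothesis. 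The bulk of the work, carried out in section \ref{checking}, is to verify the remaining three hypotheses: (i) that $F$ is $C^1$ in $(r,\lambda)$ near $(\rho,\lambda_3)$ with the mixed derivative $\partial^2_{\lambda r}F$ existing and continuous; (ii) that there is a value $\lambda_3$ for which the linearized operator $\partial_r F[\rho,\lambda_3]$ has a one-dimensional kernel and a range of codimension one; and (iii) the transversality condition $\partial^2_{\lambda r}F[\rho,\lambda_3]\,r_0 \notin \mathcal R(\partial_r F[\rho,\lambda_3])$.

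For the regularity claims I would linearize $F$ explicitly: writing $r = \rho + \varepsilon g$ and differentiating in $\varepsilon$ at $\varepsilon = 0$, the singular kernel $|x(\alpha,\rho)-x(\alpha',\rho')|^{-1}$ on the trivial configuration reduces to the explicit elliptic-integral kernel in the radial distance, whose asymptotics and error bounds are exactly those computed in Appendix \ref{sectionasymptotics}. One shows $\partial_r F$ maps $H^{4,3}\to H^{3,3}$ boundedly (the $\Lambda^{-1}$-type smoothing of the Riesz kernel gains one derivative, compensating the loss from the $r_\alpha$ terms) and depends continuously on $(r,\lambda)$; the $\lambda$-dependence of $F$ is affine, so $\partial_\lambda F$ and $\partial^2_{\lambda r}F$ are immediate. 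The key structural point is that, after projecting onto Fourier modes $e^{ik\alpha}$ in the $\alpha$ variable, $\partial_r F[\rho,\lambda]$ becomes block-diagonal, acting on each mode $k$ as $\lambda$ times multiplication by $ik$ minus a self-adjoint integral operator $L_k$ in the $\rho$ variable supported on $[1-a,1]$. Thus the kernel equation for mode $k$ is the functional (not scalar) eigenvalue problem $\lambda k\, g_k = L_k g_k$, and membership in the kernel forces a nontrivial solution of this one-dimensional-in-$\rho$ integral equation.

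The hard part — and the reason for the computer assistance — is step (ii): one must exhibit $\lambda_3$ and a nonzero $g_3 \in H^{4}(1-a,1)$ solving $3\lambda_3 g_3 = L_3 g_3$, prove this eigenvalue is simple, and rule out the vanishing of $\lambda k - (\text{eigenvalues of }L_k)$ for the other modes $k\neq 3$ at $\lambda = \lambda_3$, so that $\mathcal N(\partial_r F[\rho,\lambda_3])$ is genuinely one-dimensional and the complementary modes give a closed complemented range of codimension one. Since no closed formula is available for $\lambda_3$ or $g_3$, the plan is: (a) discretize $L_3$ (e.g. on a Chebyshev or finite-element basis in $\rho$), compute an approximate eigenpair, and then validate it rigorously with interval arithmetic — bounding the operator norm of $L_3$ restricted to the orthogonal complement of the approximate eigenvector, and using a Newton--Kantorovich or a quantitative spectral-gap argument to trap a true eigenvalue near the numerical one and certify simplicity; (b) for $k \neq 3$, use monotonicity/decay estimates on the eigenvalues of $L_k$ in $k$ together with a finite number of rigorous interval evaluations for the small values of $k$ to show $3\lambda_3 \neq k\lambda_k'$ for every admissible eigenvalue $\lambda_k'$ of $L_k$, invoking the $3$-fold symmetry to restrict $k$ to multiples of $3$. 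Finally, the transversality condition (iii) reduces, after the mode decomposition, to checking that $\partial_\lambda(\lambda k)\big|_{k=3} g_3 = 3 g_3$ is not in the range of $3\lambda_3 - L_3$ on mode $3$, i.e. that $g_3$ is not orthogonal to the kernel of the adjoint — which holds precisely because $L_3$ is self-adjoint and the eigenvalue is simple, so $\langle g_3, g_3\rangle \neq 0$. Assembling these verified hypotheses, the Crandall--Rabinowitz theorem produces the asserted branch $(\xi r_0 + \xi\psi(\xi), \phi(\xi))$ of nontrivial $3$-fold symmetric solutions in $H^{4,3}(\Omega_a)$ bifurcating from $(\rho,\lambda_3)$; a final elementary estimate shows $r_\rho = 1 + \xi\,(\psi(\xi))_\rho > c > 0$ for $\xi$ small, completing the proof.
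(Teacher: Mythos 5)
Your proposal does not prove the statement in question. The statement is the abstract Crandall--Rabinowitz theorem itself: given a map $F$ between Banach spaces with a trivial branch $F[0,\mu]=0$, continuous partials $\pa_\mu F$, $\pa_r F$, $\pa^2_{\mu r}F$, one-dimensional kernel and cokernel of $\pa_r F[0,0]$, and the transversality condition \eqref{condTransversality}, one must construct the local curve $(\xi r_0+\xi\psi(\xi),\phi(\xi))$ of nontrivial zeros and show it exhausts $F^{-1}(0)$ near the origin. What you have written instead is a plan for \emph{verifying the hypotheses} of that theorem for the specific SQG functional, i.e.\ a sketch of the proof of Theorem \ref{main}; that is the content of Section \ref{checking} of the paper, not of the quoted abstract theorem (which the paper does not prove but cites from the original reference). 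As a secondary remark, even read as a sketch of Theorem \ref{main} your argument for transversality is off: the mode-$3$ operator $\Theta^3$ is \emph{not} self-adjoint, so one cannot conclude from $\langle g_3,g_3\rangle\neq 0$; the paper must construct the adjoint eigenfunction $B^3_*$ and prove $\langle B^3,B^3_*\rangle>0$ (Lemma \ref{BBstar}).

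A proof of the actual statement is a Lyapunov--Schmidt-type reduction. Since $F[0,\mu]=0$, one has the representation $F[r,\mu]=\int_0^1 \pa_r F[tr,\mu]\,r\,dt$. Writing $r=\xi(r_0+z)$ with $z\in Z$, define $g(\xi,z,\mu)=\int_0^1 \pa_r F[t\xi(r_0+z),\mu](r_0+z)\,dt$, so that $F[\xi(r_0+z),\mu]=\xi\,g(\xi,z,\mu)$ and, for $\xi\neq 0$, the nontrivial zeros of $F$ correspond exactly to zeros of $g$. One checks $g(0,0,0)=\pa_r F[0,0]r_0=0$ and that the derivative of $g$ with respect to $(z,\mu)$ at $(0,0,0)$ is the map $(w,\tau)\mapsto \pa_r F[0,0]w+\tau\,\pa^2_{\mu r}F[0,0]r_0$ from $Z\times\RR$ to $Y$; this is a bijection because $\pa_r F[0,0]$ restricted to $Z$ is injective with closed range $\mathcal{R}$ of codimension one, and $\pa^2_{\mu r}F[0,0]r_0\notin\mathcal{R}$ by \eqref{condTransversality}. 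The implicit function theorem then yields continuous $\xi\mapsto(\psi(\xi),\phi(\xi))$ with $g(\xi,\psi(\xi),\phi(\xi))=0$ and $(\psi(0),\phi(0))=(0,0)$, and a separate uniqueness argument shows every zero of $F$ in a small neighborhood lies either on the trivial line or on this curve. None of these steps appear in your text, so as a proof of the stated theorem the proposal is a complete gap.
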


We will check in the following section the hypotheses of the C-R theorem.

\section{Checking the hypotheses of the C-R theorem for the equation \eqref{rotequation}}\label{checking}

In this section we will check the hypotheses of the C-R theorem in suitable Banach spaces $X$ and $Y$ in order to find a nontrivial branch of the solution $(r(\alpha,\rho), \lambda)$ of \eqref{rotequation}. In order to be able to apply this theorem in the way that it is written in section \ref{CR} we will define new variables:
\begin{align}
\overline{r}(\alpha,\rho)& \equiv r(\alpha,\rho)-\rho\\
\mu & \equiv \lambda-\lambda_3\\
\overline{F}[\overline{r},\mu]& \equiv  F[\overline{r}+\rho,\lambda_3+\mu]\label{functionalfb}
\end{align}
with $\lambda_3$ to be fixed later. Thus we understand \eqref{rotequation} as an equation for $(\overline{r},\mu)$ rather that for $(r,\lambda)$. In fact, we look for solutions of
\begin{align}\label{rbarra}
\overline{F}[\overline{r},\mu]=0 \quad \text{in $\Omega_a$}.
\end{align}
Let us also define the spaces $H^{k,l}(\Omega_a)$ for $k,\,l \in \N$ and $k\geq l$ as follows:
\begin{align}\label{Hkl}
\left\{ r\in L^2(\Omega_a) \, : \, ||r||^2_{L^2(\Omega_a)}+ ||\pa^{l}_\rho r||_{L^2(\Omega_a)}^2+\sum_{j=0}^l||\pa^{k-j}_\alpha \pa_\rho^{j} r||^{2}_{L^2(\Omega_a)}<\infty\right\}.
\end{align}


We will work in the space $H^{4,3}$ and notice that $H^{4,3} \subset C^2$. This is shown below. The reason why we work with this space is because the functional $\overline{F}$ takes 1 derivative in $\al$ but no derivatives in $\rho$. Due to this anisotropy we would not be able to apply the C-R theorem by solely using homogeneous spaces (see \cite{Lannes:water-waves-book} for additional information).

\begin{lemma}
Let $\Omega \in [0,1] \times [-\pi,\pi]$ and $f: \Omega \rightarrow \mathbb{R} \in H^{4,3}(\Omega)$. Then:

\begin{align*}
\|f\|_{L^{\infty}(\Omega)} + \|\pa_{x}^{2} f\|_{L^{\infty}(\Omega)} + \|\pa_{\al}^{2} f\|_{L^{\infty}(\Omega)} \leq C \|f\|_{H^{4,3}(\Omega)}
\end{align*}

\end{lemma}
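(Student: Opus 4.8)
The plan is to establish the embedding $H^{4,3}(\Omega) \hookrightarrow C^2(\Omega)$ via a combination of one-dimensional Sobolev inequalities applied in each variable, exploiting the anisotropic structure of the norm in \eqref{Hkl}. The main point is that $H^{4,3}$ controls four derivatives in $\alpha$ but only three in $\rho$, so I cannot simply invoke the standard isotropic embedding $H^{s}(\Omega)\subset C^{s-1}(\Omega)$ in two dimensions; I need to track which mixed derivatives are actually bounded in $L^2$ and check that each of $f$, $\pa_x^2 f$, $\pa_\al^2 f$ lands among them with enough margin to apply a Morrey-type estimate.

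First I would recall the basic one-dimensional fact: for $g\in H^1(I)$ on a bounded interval $I$, $\|g\|_{L^\infty(I)}\le C\|g\|_{H^1(I)}$, and iterating, $\|g\|_{C^{m}(I)}\le C\|g\|_{H^{m+1}(I)}$. For the two-dimensional statement I would use a tensorized argument: to bound $\|h\|_{L^\infty(\Omega)}$ for a function $h$ on $\Omega = I_\rho\times I_\al$, first apply the 1D embedding in $\rho$ to get $\|h\|_{L^\infty_\rho L^2_\al}\le C(\|h\|_{L^2} + \|\pa_\rho h\|_{L^2})$ (after squaring and integrating in $\al$), then apply the 1D embedding in $\al$ with one more derivative, so that $\|h\|_{L^\infty_{\rho}L^\infty_{\al}} \le C(\|h\|_{L^2}+\|\pa_\rho h\|_{L^2}+\|\pa_\al h\|_{L^2}+\|\pa_\rho\pa_\al h\|_{L^2})$. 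Thus a sufficient condition for $h\in L^\infty(\Omega)$ is that $h,\pa_\al h, \pa_\rho h,\pa_\al\pa_\rho h\in L^2(\Omega)$. I would then verify this for the three functions in question.

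The three verifications:
\begin{itemize}
\item For $h=f$: need $f,\pa_\al f,\pa_\rho f,\pa_\al\pa_\rho f\in L^2$. All four are controlled by $\|f\|_{H^{4,3}}$ since $k=4,l=3$ dominate these low orders (for instance $\pa_\al\pa_\rho f = \pa_\al^{k-j}\pa_\rho^j f$ with $k=4$ is not literally a term, but $\pa_\al\pa_\rho f$ is controlled by interpolation between $\pa_\rho f$ and $\pa_\al^4\pa_\rho f$, or more cleanly by noting $\pa_\rho f\in H^3_\al$ hence $\pa_\al\pa_\rho f\in L^2$).
\item For $h=\pa_\al^2 f$: need $\pa_\al^2 f,\pa_\al^3 f,\pa_\al^2\pa_\rho f,\pa_\al^3\pa_\rho f\in L^2$. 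The first two follow from the $\sum_{j=0}^l\|\pa_\al^{k-j}\pa_\rho^j f\|_{L^2}$ term with $j=0$ ($\pa_\al^4 f\in L^2$, hence $\pa_\al^2,\pa_\al^3 f\in L^2$ on a bounded domain by interpolation with $\|f\|_{L^2}$); the mixed ones follow from the $j=1$ term $\pa_\al^3\pa_\rho f\in L^2$, which directly gives $\pa_\al^3\pa_\rho f$ and, by interpolation with $\pa_\rho f$, also $\pa_\al^2\pa_\rho f$.
\item For $h=\pa_\rho^2 f$ (which is what $\pa_x^2 f$ should mean in the $\rho$-direction — I would clarify notation): need $\pa_\rho^2 f,\pa_\al\pa_\rho^2 f,\pa_\rho^3 f,\pa_\al\pa_\rho^3 f\in L^2$. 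Here $\pa_\rho^3 f\in L^2$ is the $\|\pa_\rho^l f\|_{L^2}$ term with $l=3$; $\pa_\rho^2 f\in L^2$ by interpolation with $\|f\|_{L^2}$; $\pa_\al\pa_\rho^3 f$ is not directly in the norm, so this is the delicate case — but $\pa_\al^{k-j}\pa_\rho^j f$ with $j=l=3$ gives $\pa_\al\pa_\rho^3 f\in L^2$ exactly (since $k-j = 4-3 = 1$), and $\pa_\al\pa_\rho^2 f$ then follows by interpolation.
\end{itemize}
The routine interpolation facts I would invoke are the Gagliardo–Nirenberg-type inequalities on bounded intervals, e.g. $\|\pa_\al^m g\|_{L^2}\le C\|g\|_{L^2}^{1-m/N}\|\pa_\al^N g\|_{L^2}^{m/N} + C\|g\|_{L^2}$ for $0\le m\le N$, applied slicewise and then integrated.

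The step I expect to be the main obstacle is bookkeeping the mixed derivative $\pa_\al\pa_\rho^3 f$ for the term $\pa_\rho^2 f$: it requires using precisely the highest-order mixed term $\pa_\al^{k-l}\pa_\rho^l f$ with the minimal number of $\al$-derivatives, and one must check this is genuinely included in the definition \eqref{Hkl} (it is, as the $j=l$ summand). Once that is pinned down, the rest is a clean application of the tensorized 1D embeddings. A minor additional care is the domain geometry: the lemma states $\Omega\subset[0,1]\times[-\pi,\pi]$ is a product of intervals, so Fubini and slicewise 1D embeddings apply directly without needing extension operators; I would remark that the constant $C$ depends only on the sidelengths.
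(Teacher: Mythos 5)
Your proposal is correct, but it follows a genuinely different route from the paper. You prove a single anisotropic tensorized embedding, $\|h\|_{L^\infty(\Omega)}\le C\bigl(\|h\|_{L^2}+\|\pa_\al h\|_{L^2}+\|\pa_\rho h\|_{L^2}+\|\pa_\al\pa_\rho h\|_{L^2}\bigr)$, and then check by interpolation that each of $f$, $\pa_\al^2 f$, $\pa_x^2 f$ has the required four derivatives in $L^2$. The paper instead splits the three terms by method: for $\pa_\al^2 f$ and $\pa_{\al x}f$ it observes that $f_\al$ lies in the isotropic space $H^3(\Omega)$, hence in $C^{1+\gamma}(\overline{\Omega})$ by the standard two-dimensional embedding; for the delicate term $\pa_x^2 f$ it subtracts the $x$-average $g(\al)=\int_0^1\pa_x^2 f\,dx$, bounds $g$ in $H^1([-\pi,\pi])\subset L^\infty$, and bounds the remainder $h(\al,x)=\int_0^1\int_{x'}^x\pa_x^3 f\,dx''dx'$ uniformly in $x$ in $H^1_\al$, again invoking the one-dimensional embedding in $\al$. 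Both arguments ultimately hinge on exactly the same decisive piece of the norm, the $j=l$ summand $\pa_\al\pa_x^3 f\in L^2$, which you correctly identify as the bottleneck; the paper's averaging trick is a way of packaging the fundamental theorem of calculus in $x$ so that only one $\al$-derivative of $\pa_x^3 f$ is ever needed, which is precisely what your $\pa_\al\pa_\rho h$ term encodes. Your version is more uniform and arguably cleaner; the paper's gets $C^{1+\gamma}$ H\"older control on $f_\al$ for free. One minor slip: your first suggestion for $\pa_\al\pa_\rho f$, interpolating against $\pa_\al^4\pa_\rho f$, would not work since that derivative is not controlled by the $H^{4,3}$ norm; your self-correction (interpolating $\pa_\rho f$ against $\pa_\al^3\pa_\rho f$, the $j=1$ summand) is the right one.
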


\begin{proof}
If $f \in H^{4,3}(\Omega)$, then:
\begin{align*}
\|f\|_{L^{2}(\Omega)}^{2} + \|\pa_{x}^{3}f \|_{L^{2}(\Omega)}^{2} + \|\pa_{\al}^{4}f \|_{L^{2}(\Omega)}^{2} + \|\pa_{\al}^{3} \pa_{x}f \|_{L^{2}(\Omega)}^{2} + \|\pa_{\al}^{2} \pa_{x}^{2}f \|_{L^{2}(\Omega)}^{2}
 + \|\pa_{\al}\pa_{x}^{3}f \|_{L^{2}(\Omega)}^{2} < C
\end{align*}

On one hand, $f_{\al} \in H^{3}(\Omega)$, since

\begin{align*}
\|f_{\al}\|_{H^{3}(\Omega)}^{2} & = \|f_{\al}\|_{L^{2}(\Omega)}^{2} +  \|\pa_{x}^{3}f_{\al} \|_{L^{2}(\Omega)}^{2} +  \|\pa_{\al}^{3}f_{\al} \|_{L^{2}(\Omega)}^{2} \\
& \lesssim \|f\|_{L^{2}(\Omega)}^{2} + \|\pa_{\al}^{4} f\|_{L^{2}(\Omega)}^{2} + \|\pa_{x}^{3}\pa_{\al} f \|_{L^{2}(\Omega)}^{2} +  \|\pa_{\al}^{4}f \|_{L^{2}(\Omega)}^{2} \lesssim \|f\|_{H^{4,3}(\Omega)}
\end{align*}

This implies that $f_{\al} \in C^{1+\gamma}(\overline{\Omega})$, yielding $\|f_{\al \al}\|_{L^{\infty}(\overline{\Omega})} \leq C$ and $\|f_{\al x}\|_{L^{\infty}(\overline{\Omega})} \leq C$ for some constant $C$.

On the other hand, if we define

\begin{align*}
g(\al) = \int_{0}^{1} \pa_{x}^{2} f(x,\al) dx,
\end{align*}

we claim that $g \in H^{1}([-\pi,\pi])$. In order to see this, we can compute

\begin{align*}
\|g\|_{L^{2}([-\pi,\pi])}^{2} = \int_{-\pi}^{\pi} \left(\int \pa_{x}^{2} f(x,\al) dx\right)^{2} d\al \leq \int_{-\pi}^{\pi} \int_{0}^{1} |\pa_{x}^{2} f(x,\al)|^{2} dx d\al \leq \|f\|_{H^{4,3}(\Omega)}.
\end{align*}

In addition

\begin{align*}
\|g_{\al}\|_{L^{2}([-\pi,\pi])}^{2} = \int_{-\pi}^{\pi} \left(\int \pa_{x}^{2} \pa_{\al} f(x,\al) dx\right)^{2} d\al \leq \int_{-\pi}^{\pi} \int_{0}^{1} |\pa_{x}^{2} \pa_{\al} f(x,\al)|^{2} dx d\al \leq \|f\|_{H^{4,3}(\Omega)}.
\end{align*}

Therefore

\begin{align*}
\|g\|_{L^{\infty}(\Omega)} \leq \|g\|_{L^{\infty}([-\pi,\pi])} \leq \|g\|_{H^{1}([-\pi,\pi])} \leq C \|f\|_{H^{4,3}(\Omega)}.
\end{align*}

We have that

\begin{align*}
\pa_{x}^{2} f(x,\al) = \pa_{x}^{2} f(x,\al) - \int_{0}^{1} \pa_{x}^{2} f(x',\al) dx' + g(\al),
\end{align*}

which we can bound in the following way:

\begin{align*}
\|\pa_{x}^{2} f\|_{L^{\infty}(\Omega)} \leq \left\|\int_{0}^{1} \pa_{x}^{2} f(x,\al) - \pa_{x}^{2} f(x',\al) dx'\right\|_{L^{\infty}(\Omega)} + \|g\|_{L^{\infty}(\Omega)}.
\end{align*}

In addition:

\begin{align*}
\int_{0}^{1} \pa_{x}^{2} f(x,\al) -  \pa_{x}^{2} f(x',\al) dx'
= \int_{0}^{1} \int_{x'}^{x} \pa_{x}^{3} f(x'',\al) dx'' dx' = h(\al,x)
\end{align*}

We now fix $x$ and we show that $\|g(\al,x)\|_{H^{1}([-\pi,\pi])}$ is uniformly bounded. We achieve that by using the following estimate:

\begin{align*}
\|h(\al,x)\|_{L^{2}([-\pi,\pi])} & \leq \int_{0}^{1} \int_{x'}^{x}\left(\int_{-\pi}^{\pi} |\pa_{x}^{3} f(x'',\al)|^{2}d\al\right)^{\frac12} dx'' dx' \\
& \leq \int_{0}^{1}|x-x'|^{\frac12}\left(\int_{x'}^{x} \int_{-\pi}^{\pi}|\pa_{x}^{3}f(x'',\al)|^{2}\text{sign}(x-x') d\al dx''\right)^{\frac12} dx' \\
& \leq C\left(\int_{0}^{1}\left(\int_{0}^{1} \int_{-\pi}^{\pi}|\pa_{x}^{3}f(x'',\al)|^{2} d\al dx''\right)^{\frac12} dx'\right) \\
& \leq C \|f\|_{H^{4,3}(\Omega)},
\end{align*}

where $C$ is independent of $x$. We can do the same procedure with $\pa_{\al} h(\al,x)$, getting

\begin{align*}
\|h(\al,x)\|_{L^{\infty}([-\pi,\pi])} \leq \|h(\al,x)\|_{H^{1}([-\pi,\pi])} \leq C\|f\|_{H^{4,3}(\Omega)}
\end{align*}

Taking the supremum over $x$ yields the desired result.

\end{proof}

The theorem we will prove is the following:
\begin{theorem}\label{thm3} Let $f$ and $a$ be as in Theorem \ref{main}. Then there exist $(\overline{r}_0(\alpha,\rho), \lambda_3)\in H^{4,3}(\Omega_a)\times \RR$,
 an interval $(-b, b)$, and continuous functions
\begin{align*}
\phi\, : \, (-b, b) \rightarrow \RR && \psi\, :\, (-b, b) \rightarrow Z\end{align*}
with $\phi(0) = 0, \psi(0) = 0$, such that, if $Z$ is any complement of $\text{span}\{\overline{r}_0\}$ in $H^{4,3}(\Omega_a)$,

  $$\overline{F}[\xi \overline{r}_0+ \xi \psi(\xi),\phi(\xi)]=0,$$
for $|\xi|<b$.

In addition these solutions will have $3-$fold symmetry.
\end{theorem}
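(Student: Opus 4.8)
The plan is to verify the four hypotheses of the Crandall--Rabinowitz theorem for the functional $\overline{F}$ defined in \eqref{functionalfb}, working with the Banach spaces $X = H^{4,3}(\Omega_a)$ (restricted to the subspace of functions with $3$-fold symmetry, i.e.\ Fourier series in $\alpha$ supported on multiples of $3$) and $Y$ an appropriate target space (one derivative less in $\alpha$, reflecting that $\overline{F}$ takes one $\alpha$-derivative but no $\rho$-derivatives; concretely the space whose norm controls $\|r\|_{L^2}$ together with the three mixed derivatives $\partial_\alpha^3 r,\ \partial_\alpha^2\partial_x r,\ \partial_\alpha\partial_x^2 r,\ \partial_x^3 r$ one degree lower). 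First I would record that $\overline{F}[0,\mu]=0$ for all $\mu$: this is exactly the statement that the radial profile $r(\alpha,\rho)=\rho$ is a stationary solution of \eqref{rotequation} for every $\lambda$, which follows because for $r=\rho$ one has $r_\alpha=0$ and the remaining $\sin(\alpha-\alpha')$ integral vanishes by symmetry (the integrand is odd under $\alpha'\mapsto 2\alpha-\alpha'$ after using that $r$, $r_\alpha$ depend only on $\rho'$). Second, I would check that $\overline{F}$ is well defined and $C^1$ as a map $V\times(-1,1)\to Y$, and that $\partial_\mu\overline{F}$, $\partial_{\overline r}\overline{F}$, $\partial^2_{\mu\overline r}\overline{F}$ exist and are continuous; this is a matter of differentiating the kernels $1/|x(\alpha,\rho)-x(\alpha',\rho')|$ under the integral sign, controlling the (weakly singular) kernel using the chord-arc condition and $r_\rho>c>0$, and checking the loss of exactly one $\alpha$-derivative. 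The key linearization is $\partial_{\overline r}\overline{F}[0,\mu]$, obtained by taking the Gateaux derivative of $F$ at $r=\rho$; after using the explicit polar-coordinate expansions in the excerpt, this linear operator diagonalizes on Fourier modes $e^{in\alpha}$, acting as a one-parameter family of operators $L_n(\lambda)$ in the $\rho$-variable on $(1-a,1)$.

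The heart of the argument is hypothesis (3): that for the distinguished eigenvalue $\lambda=\lambda_3$ the kernel $\mathcal{N}(\partial_{\overline r}\overline{F}[0,0])$ and the cokernel $Y/\mathcal{R}(\partial_{\overline r}\overline{F}[0,0])$ are one-dimensional. Here I would first compute, from the diagonalized linear operator, the $n$-th mode operator $L_n(\lambda)$; schematically it has the form $\lambda\,(\text{multiplication by }n) - (\text{a convolution-type operator in }\rho\text{ against }f_\rho$, with coefficients built from the elliptic integrals $\int \cos(n\theta)/|x-x'|\,d\theta$ etc.$)$. As emphasized in the introduction, the linear problem is now a \emph{functional} equation in $\rho$ rather than a scalar equation, so one must (a) identify the relevant function space structure making $L_n(\lambda)$ Fredholm of index $0$, (b) show there exists $\lambda_3$ for which $L_3(\lambda_3)$ has a one-dimensional kernel spanned by some $\overline r_0(\rho)$, equivalently a nontrivial eigenvalue of the associated compact operator, and (c) show the eigenvalue is \emph{simple} and that for $n\neq 3$ the operator $L_n(\lambda_3)$ is invertible (so no other modes contribute to the kernel). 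Step (b)--(c) is where the computer-assisted part enters: one cannot write $\lambda_3$ or $\overline r_0$ in closed form, so I expect to discretize $L_3(\lambda)$ (project onto finitely many $\rho$-modes, say a polynomial or Fourier basis on $(1-a,1)$, with rigorous interval-arithmetic control of the truncation tail via the asymptotics of the elliptic integrals collected in the appendices), locate $\lambda_3$ by a rigorous determinant/eigenvalue enclosure, verify simplicity by a bound on the second-smallest singular value, and verify invertibility of $L_n(\lambda_3)$ for the finitely many dangerous small $n$ directly while handling large $n$ by a soft a priori bound showing $L_n(\lambda_3)$ is dominated by its $\lambda_3 n$ diagonal term. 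This is the main obstacle: controlling an infinite-dimensional linear operator rigorously, both the finitely many modes via interval arithmetic and the tail via hard analytic estimates on the kernels, and in particular establishing that $\lambda_3$ is \emph{simple} with quantitative separation from the rest of the spectrum.

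Finally, hypothesis (4), the transversality condition $\partial^2_{\mu\overline r}\overline{F}[0,0]\,\overline r_0\notin\mathcal{R}(\partial_{\overline r}\overline{F}[0,0])$: since $\partial_\mu$ only hits the $\lambda r_\alpha$ term, $\partial^2_{\mu\overline r}\overline{F}[0,0]$ acts on the $n$-th mode simply as multiplication by $in$, so $\partial^2_{\mu\overline r}\overline{F}[0,0]\,\overline r_0 = 3i\,\overline r_0$ up to normalization; the claim becomes that $\overline r_0$ (the kernel element, at frequency $3$) is not in the range of $L_3(\lambda_3)$, which holds precisely because $L_3(\lambda_3)$ is self-adjoint-like (or at least has matching kernel and cokernel represented by $\overline r_0$) so the Fredholm alternative gives $\overline r_0\perp\mathcal{R}(L_3(\lambda_3))$ iff a certain pairing $\langle \overline r_0,\overline r_0\rangle_w$ against a weight is nonzero — again checkable, and in the worst case enclosed numerically. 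Once all four hypotheses are verified, the Crandall--Rabinowitz theorem as quoted in Section \ref{CR} yields directly the interval $(-b,b)$, the continuous branches $\phi,\psi$ with $\phi(0)=\psi(0)=0$, and the solution set $\overline F[\xi\overline r_0+\xi\psi(\xi),\phi(\xi)]=0$; the $3$-fold symmetry is preserved throughout because it is respected by $\overline F$ (the equation commutes with the rotation $\alpha\mapsto\alpha+2\pi/3$, $x\mapsto R_{2\pi/3}x$), so working in the $3$-fold symmetric subspace is consistent and the bifurcating solutions inherit it. I would then remark that combining this with Section \ref{sectionequations} (extending $z$ smoothly to all $\rho$ and defining $\theta$ via \eqref{teta}) gives Theorem \ref{main} and hence Theorem \ref{globalsqg}.
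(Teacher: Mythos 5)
Your overall route is the same as the paper's: both reduce Theorem \ref{thm3} to checking the four Crandall--Rabinowitz hypotheses for $\overline{F}$ on the $3$-fold symmetric spaces $H^{4,3}_{3,\mathrm{even}}\to H^{3,3}_{3,\mathrm{odd}}$, with hypotheses (1), (2) handled exactly as you describe and the whole difficulty concentrated in the linear analysis. Where you genuinely diverge is in how the linear operator is controlled, and two of your proposed steps would not work as stated.

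First, the spectral step. You propose discretizing $L_3(\lambda)$ and locating $\lambda_3$ by a rigorous determinant or eigenvalue enclosure, with simplicity from a singular-value bound. The paper instead writes $\Theta^3=\Theta^3_S+\Theta^3_A$, takes an \emph{explicit} approximate eigenpair $(B_{sj},\lambda^*)$ with $B_{sj}$ a normalized indicator function, rigorously bounds only a handful of scalars ($\|e\|_{L^2}$, $\langle e,B_{sj}\rangle$, $\|\Theta^3_A B_{sj}\|_{L^2}$, and a coercivity constant $c^*$ on $B_{sj}^\perp$), solves for the correction $v^\lambda\in B_{sj}^\perp$ by Lax--Milgram for each $\lambda<c^*$, and finds $\lambda_3$ as a zero of a continuous scalar function $d(\lambda)$ via an intermediate-value argument (Lemmas \ref{ddl}--\ref{lemavertice}). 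Simplicity and the one-dimensionality within the $m=1$ mode then come for free from Lax--Milgram uniqueness. Both strategies are viable, but the paper's requires validating four numbers rather than the full spectrum of a discretized matrix, and it produces the quantitative bounds on $\|v^{\lambda_3}\|_{L^2}$ that are reused later.

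Second, your treatment of the higher modes has a gap. After projecting onto the mode $\cos(3m\alpha)$ and dividing by $\sin(3m\alpha)$, the paper also divides through by $m$ (compare \eqref{sinsimplificar} with \eqref{sinsimplificar2}), so the reduced eigenvalue problem $\Theta^{3m}B=\lambda B$ has \emph{no} growing diagonal term $\lambda_3 n$; your ``diagonal dominance for large $n$'' heuristic does not apply. The mechanism that actually kills all modes $m\geq 2$ at once is the pointwise monotonicity $T^{m+1}<T^{m}$ of the kernels (Lemma \ref{positivo}), which after the reduction $\langle\Theta^{3m}u,u\rangle\geq\langle\Theta^{3m}|u|,|u|\rangle$ gives $\lambda^s_{3m}\geq\lambda^s_6$ for all $m\geq 2$ (Lemma \ref{mn}); a single additional computer-assisted bound $\lambda^s_6>0.4837>\lambda_3$ then yields coercivity of $\Theta^{3m}-\lambda_3$ uniformly in $m$, which is also what drives the surjectivity estimates for the range. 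Without this structural observation you would be left with infinitely many mode-by-mode verifications.

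Third, transversality. You assert that $L_3(\lambda_3)$ is ``self-adjoint-like'' with matching kernel and cokernel spanned by $\overline{r}_0$. It is not: $\Theta^3$ has a genuinely nonzero antisymmetric part, the cokernel is spanned by the eigenfunction $B^3_*$ of the \emph{adjoint} $\Theta^{3*}$ (Proposition \ref{btresadj}), and the transversality condition reduces to $\langle B^3,B^3_*\rangle\neq 0$. This is not automatic; the paper proves it (Lemma \ref{BBstar}) by writing $B^3=B_{sj}+v^{\lambda_3}$, $B^3_*=B_{sj}+v_*^{\lambda_3}$ and checking quantitatively that $\|B_{sj}\|^2$ dominates the cross term, using the Lax--Milgram bounds on the corrections. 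Your fallback of ``enclosing the pairing numerically'' would require rigorous knowledge of the true eigenfunctions, which is exactly what one does not have; the paper's perturbative bounds are the substitute.
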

Here it is important to remark that this theorem provides a nontrivial solution $r(\alpha,\rho)=\rho +\xi \overline{r}_0+\xi \psi(\xi)$ of \eqref{rotequation} with $\lambda=\lambda_3+\phi(\xi)$ satisfying $|r_\rho(\alpha,\rho)|>c>0$ if we take $\xi$ small enough. Theorem \ref{main} follows from Theorem \ref{thm3}.

\subsection{Step 1. The functional setting and the hypothesis 1}
Our first step is to define the spaces we will work with in order to apply the C-R theorem. The spaces $H^{k,l}_{3, \text{even}}(\Omega_a)$ and $H^{k,l}_{3, \text{odd}}(\Omega_a)$  will be given by
\begin{align*}
\left\{ \overline{r}\in H^{k,l}(\Omega_a) \,:\, \overline{r}(\alpha,\rho)=\sum_{m=1}^\infty \hat{r}_m(\rho)\cos(3m\alpha)\right\},\end{align*} and \begin{align*}
\left\{ \overline{r}\in H^{k,l}(\Omega_a) \,:\, \overline{r}(\alpha,\rho)=\sum_{m=1}^\infty \hat{r}_m(\rho)\sin(3m\alpha)\right\},
\end{align*}
respectively.

One of the purposes to introducing these spaces, which only represent frequencies multiples of $3$, is to be able to show the $3-$fold symmetry of the solution. Our starting space $X$ will be $H^{4,3}_{3, \text{even}}(\Omega_a)$.

 The target space $Y$ will be $H^{3,3}_{3, \text{odd}}(\Omega_a)$. Notice that a function in $H^{4,3}(\Omega_a)$ belongs to $C^{2,2}(\overline{\Omega_a})=C^1(\overline{\Omega_a})$. Finally we take the  neighbourhood $V$ of $0$ in $X$ to be
\begin{align*}
V \equiv \left\{ \overline{r}\in H^{4,3}_{3, \text{even}}(\Omega_a)\, : \, ||\overline{r}||_{H^{4,3}_{3, \text{even}}(\Omega_a)}<\delta  \right\}
\end{align*}
for $\delta>0$. The parameter $\delta$ will be fixed later (small enough).

Given these definitions  we need to show the following lemma:
\begin{lemma}\label{lema42}Let $\overline{F}[\overline{r},\mu]$ be as in \eqref{functionalfb}. Then, for fixed $a\in (0,1)$, there exists $\delta>0$ small enough so that
\begin{align*}
\overline{F} \, : \,  V\times [-1, 1]\to H^{3,3}_{3,\, \emph{odd}}(\Omega_a).
\end{align*}
\end{lemma}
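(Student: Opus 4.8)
\textbf{Proof proposal for Lemma \ref{lema42}.}

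The plan is to show that $\overline{F}$ maps into the claimed space in two stages: first the regularity statement (that $\overline{F}[\overline{r},\mu]\in H^{3,3}(\Omega_a)$ for $\overline{r}$ in a small enough ball $V$), and then the symmetry/parity statement (that it lands in the $3$-fold, odd subspace). For the regularity part, I would start from the explicit formula \eqref{functionalF} for $F$ (equivalently \eqref{functionalfb} for $\overline{F}$), written with $r=\rho+\overline{r}$ and $\lambda=\lambda_3+\mu$. The linear term $\lambda r_\al$ is harmless: if $\overline{r}\in H^{4,3}_{3,\text{even}}$ then $\overline{r}_\al\in H^{3,3}_{3,\text{odd}}$ immediately, and $\rho_\al=0$. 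The real work is the three integral terms. For each one I would (i) split the kernel $\tfrac{1}{|x(\al,\rho)-x(\al',\rho')|}$ — with $x$ given by \eqref{x} — into its (weakly) singular part near $\al=\al'$, $\rho=\rho'$ and a smooth remainder; (ii) use that on $V$ with $\delta$ small, $r(\al,\rho)=\rho+\overline{r}\geq (1-a)-\delta>0$ is bounded below and $r\in C^1(\overline{\Omega_a})$ by the Sobolev embedding $H^{4,3}\hookrightarrow C^{2,2}=C^1$ proved in the preceding lemma, so all the algebraic compositions ($1/r$, products of $r,r_\al$, $\cos(\al-\al'),\sin(\al-\al')$) stay in the relevant function space with the required control; (iii) bound $\rho'$-integration trivially since $f_\rho\in C^3$ is supported in $[1-a,1]$ and explicit from \eqref{definitiondef}, and handle the $\al'$-integration using the chord-arc/non-self-intersection condition (guaranteed by $\delta$ small, since $r=\rho$ satisfies it strictly) to control $|x(\al,\rho)-x(\al',\rho')|$ from below by $c(|\al-\al'|+|\rho-\rho'|)$ away from the diagonal.

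The core estimate is that the operator $\overline{r}\mapsto\big(\text{integral terms of }\overline{F}\big)$ gains regularity: $\Lambda^{-1}$ (equivalently the $\tfrac1{|x-y|}$ kernel) is smoothing of order one, and since $f_\rho$ is only $C^3$ and appears undifferentiated, the bottleneck in the $\rho$-direction is exactly $3$ derivatives, matching the target space $H^{3,3}$ (this is the anisotropy alluded to after \eqref{Hkl}). Concretely: differentiating the integral terms up to $3$ times in $\al$ and distributing derivatives among the kernel, the $\cos/\sin$ factors, the $r$-dependent coefficients and the differences $r(\al',\rho')-r(\al,\rho)$, $r_\al(\al',\rho')-r_\al(\al,\rho)$, every resulting term is either (a) a finite product of $C^1$ (hence $L^\infty$) functions times an $L^\infty_{\al,\rho}L^1_{\al',\rho'}$-bounded kernel — handled by Minkowski's integral inequality and Young/Schur-type bounds — or (b) a commutator-type term where the difference structure $r(\al',\rho')-r(\al,\rho)$ cancels one power of the singularity, leaving an integrable kernel. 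I would organize this as: bound $\|\overline{F}[\overline{r},\mu]\|_{L^2(\Omega_a)}$, then $\|\pa_\rho^3\overline{F}\|_{L^2}$, then the mixed norms $\|\pa_\al^{3-j}\pa_\rho^j\overline{F}\|_{L^2}$ for $j=0,\dots,3$, in each case reducing to the two model estimates above. It is worth isolating the most singular contribution — the term with $\pa_\al^3$ hitting $r_\al(\al',\rho')-r_\al(\al,\rho)$, i.e. essentially $\pa_\al^4\overline{r}$ integrated against the order-one smoothing kernel — and checking that the kernel's smoothing exactly compensates, so the net result is in $L^2$; this is where the choice to demand one extra $\al$-derivative in $X$ ($H^{4,3}$) than in $Y$ ($H^{3,3}$) is used.

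For the symmetry statement I would argue purely algebraically: if $\overline{r}(\al,\rho)=\sum_{m\geq1}\hat r_m(\rho)\cos(3m\al)$, then $r=\rho+\overline{r}$ satisfies $r(\al+\tfrac{2\pi}{3},\rho)=r(\al,\rho)$ and $r(-\al,\rho)=r(\al,\rho)$. Plugging into \eqref{functionalF}, the $2\pi/3$-shift can be absorbed by the change of variables $\al'\mapsto\al'+\tfrac{2\pi}{3}$ in every integral (the kernel, $\cos(\al-\al')$, $\sin(\al-\al')$ all depend only on $\al-\al'$), so $\overline{F}[\overline{r},\mu]$ is also $\tfrac{2\pi}{3}$-periodic, i.e. only frequencies $3m$ appear. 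For the parity, under $\al\mapsto-\al$ (and $\al'\mapsto-\al'$): $r_\al$ is odd, $r$ is even, $\cos(\al-\al')$ is even, $\sin(\al-\al')$ is odd, and $|x(\al,\rho)-x(\al',\rho')|$ is even; inspecting the four lines of \eqref{functionalF} term by term shows each is odd in $\al$, hence $\overline{F}[\overline{r},\mu]\in H^{3,3}_{3,\text{odd}}(\Omega_a)$. Finally I would note that the same bounds show $\overline{F}$ is well defined (finite) on all of $V\times[-1,1]$, not just differentiable, which is all Lemma \ref{lema42} asks; the continuity/differentiability in $(\overline{r},\mu)$ needed for the C-R hypotheses is deferred to later steps.

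\textbf{Main obstacle.} The delicate point is the singular-integral bookkeeping in step (iii) of the regularity argument: one must verify that for \emph{every} way of distributing the $\pa_\al^{3-j}\pa_\rho^j$ derivatives, the worst term either retains the order-one smoothing of the $1/|x-y|$ kernel or benefits from a difference-quotient cancellation, so that nothing falls outside $L^2(\Omega_a)$. Getting the diagonal behaviour of $|x(\al,\rho)-x(\al',\rho')|^{-1}$ right — showing it is comparable to $(|\al-\al'|^2+|\rho-\rho'|^2)^{-1/2}$ uniformly for $\overline{r}\in V$, using only the $C^1$ bound and the chord-arc condition — is the technical heart, and is exactly what forces the ball radius $\delta$ to be small (depending on $a$) as in the statement.
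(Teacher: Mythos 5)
Your proposal matches the paper's proof in its essentials: use the embedding $H^{4,3}(\Omega_a)\hookrightarrow C^2(\overline{\Omega_a})$ to get, for $\delta$ small, $r=\rho+\overline{r}>c_0(a,\delta)>0$; control the diagonal singularity via a lower bound of the form $|x(\al,\rho)-x(\al-\al',\rho-\rho')|\geq c(a,\delta)\sqrt{\al'^2+\rho'^2}$ (this is precisely the paper's Lemma~\ref{cuerdaarco}, proved from the chord–arc structure exactly as you describe); distribute up to three derivatives across the coefficient factors and the kernel integral; handle the worst term, where all three $\al$-derivatives fall on $r_\al(\al',\rho')-r_\al(\al,\rho)$; and settle the $3$-fold/odd statement algebraically by the changes of variables $\al'\mapsto\al'+\tfrac{2\pi}{3}$ and $\al\mapsto-\al$, $\al'\mapsto-\al'$, together with the parity of $\cos$, $\sin$, $r$, $r_\al$ and the kernel. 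The paper arranges the bookkeeping by first collapsing $F_2+F_3+F_4$ into a single product $g(\al,\rho)\,B[f,x](\al,\rho)$ in convolution-type variables, then sorting the terms of $\pa^{3-j}_\al\pa^j_\rho(gB)$ into two groups according to how many derivatives land on $g$ versus on $B$, with the hard case isolated as Lemma~\ref{d3B}; this is organizational, not a different idea.

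One place where your outline is imprecise and should be tightened: you describe the leading term as a ``commutator-type term where the difference structure cancels one power of the singularity.'' For the genuinely worst contribution — $f_\rho''\,\pa^{3}\Delta x'_\al\,|\Delta x'|^{-1}$ — no cancellation from the difference structure is used. Instead the difference is split additively: the local factor $\pa^3 x_\al(\al,\rho)$ is pulled out of the integral and multiplied by the uniformly bounded quantity $\int f_\rho''\,|\Delta x'|^{-1}$, while the remaining convolution-type integral against $\pa^3 x_\al(\al',\rho')$ is controlled by Young's inequality together with the $L^1$-integrability of $|\Delta x'|^{-1}$ in two variables. The difference/cancellation mechanism you invoke does appear in the proof, but at the stage where two or more derivatives fall on $|\Delta x'|^{-1}$: there one uses that $x_\al\in H^{3,3}\subset C^{1+\gamma}$, hence $|\pa\Delta x'_\al|\lesssim(\al'^2+\rho'^2)^{\gamma/2}$, to trade the extra singularity for a Hölder gain and recover an integrable kernel. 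With that distinction made, your route is the same as the paper's.
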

\begin{proof}
Here we recall the definition of the functional
\begin{align*}
&F[r,\,\lambda] \\&\equiv
\lambda r_{\al}(\al,\rho)  - \frac{1}{2\pi} \int_0^\infty \int_{-\pi}^\pi \frac{f_\rho(\rho')}{|x(\al,\rho)-x(\al',\rho')|}\cos(\al-\al')(r_{\al}(\al',\rho')-r_{\al}(\al,\rho)) d\al' d\rho' \nonumber\\
& + \frac{r_{\al}(\al,\rho)}{2\pi r(\al,\rho)} \int_0^\infty \int_{-\pi}^\pi \frac{f_\rho(\rho')}{|x(\al,\rho)-x(\al',\rho')|}\cos(\al-\al')(r(\al',\rho')-r(\al,\rho)) d\al' d\rho'\nonumber \\
& - \frac{1}{2\pi r(\al,\rho)} \int_0^\infty \int_{-\pi}^\pi \frac{f_\rho(\rho')}{|x(\al,\rho)-x(\al',\rho')|}\sin(\al-\al')(r(\al,\rho)r(\al',\rho') + r_{\al}(\al,\rho)r_{\al}(\al',\rho')) d\al' d\rho'\\
&\equiv F_1[r,\lambda]+F_2[r]+F_3[r]+F_4[r],
\end{align*}
with $r=\rho+\overline{r}$ and $x=r(\cos(\alpha),\sin(\alpha))$.
It is easy to check that  $F_1[r,\lambda]=\lambda \pa_\alpha r$  belongs to $H^{3,3}_{3,\, \text{odd}}(\Omega_a)$.

Next we show that  \begin{align}\label{Fenh3} F_i[r]\in H^{3,3}(\Omega_a),\quad i=2,3,4.\end{align} In order to do it we notice that, since $||\cdot||_{C^2(\overline{\Omega_a})}\leq C||\cdot  ||_{H^{4,3}(\Omega_a)}$,
we can choose, for fixed $a\in (0,1)$, $\delta$ small enough to have  that $r(\alpha,\rho)>c_0(a,\delta)>0$ and $r_\rho(\alpha,\rho)>c_1(a,\delta)>0$, for every $(\alpha,\rho)\in \Omega_a$. By comparing equations \eqref{equationx} and \eqref{rotequation} we see that
\begin{align*}
\left(F_2[r]+F_3[r]+F_4[r]\right)(\alpha,\rho)=&-\frac{x_\alpha^\perp(\alpha,\rho)}{2\pi r(\alpha,\rho)}\cdot \int_{0}^\infty\int_{-\pi}^\pi f_\rho(\rho')\frac{x_\alpha(\alpha',\rho')}{|x(\alpha,\rho)-x(\alpha',\rho')|}d\alpha' d\rho'\\
=& \frac{x_\alpha^\perp(\alpha,\rho)}{2\pi r(\alpha,\rho)}\cdot \int_{0}^\infty\int_{-\pi}^\pi f_\rho(\rho')\frac{x_\alpha(\alpha,\rho)-x_\alpha(\alpha',\rho')}{|x(\alpha,\rho)-x(\alpha',\rho')|}d\alpha' d\rho'.
\end{align*}
with $x(\alpha,\rho)=r(\alpha,\rho)(\cos(\alpha),\sin(\alpha))$ and $r(\alpha, \rho)=\rho+\overline{r}(\alpha,\rho)$. We will extend the restriction of the function $f_\rho$ to the positive real axis  $f_\rho|_{\RR^+}$ to $\RR$ by zero. We still call this extension $f_\rho$. Thus, a change of variables yields,
\begin{align*}
\left(F_2[r]+F_3[r]+F_4[r]\right)(\alpha,\rho)& =\underbrace{\frac{x_\alpha^\perp(\alpha,\rho)}{2\pi r(\alpha,\rho)}}_{g(\alpha,\rho)}\cdot \underbrace{\int_{-\infty}^\infty\int_{-\pi}^\pi f_\rho(\rho-\rho')\frac{x_\alpha(\alpha,\rho)-x_\alpha(\alpha-\alpha',\rho-\rho')}{|x(\alpha,\rho)-x(\alpha-\alpha',\rho-\rho')|}d\alpha' d\rho'}_{B[f,x](\alpha,\rho)}.
 \end{align*}
We will use the following notation. For a general function $h(\alpha,\rho)$ we define \begin{align*} h=&h(\alpha,\rho)\\ h'= & h(\alpha',\rho')\\h''=& h(\alpha-\alpha',\rho-\rho')\\ \Delta h =&h-h'\\ \Delta h'=&h-h''\end{align*}
Thus we can write
\begin{align*}
B[f,x]=\int_{-\infty}^\infty \int_{-\pi}^\pi f_\rho''\frac{\Delta x'_\alpha}{|\Delta x'|}d\alpha'd\rho'.
\end{align*}

Next we look at \begin{align}\label{deriv}\sum_{j=0}^3 \pa^{3-j}_\alpha\pa^{j}_\rho \left(g(\alpha,\rho)B[f,x](\alpha,\rho)\right).\end{align}

We will consider two groups of terms. Group 1 consists of the terms
\begin{align*}
g \pa^{3}_\alpha B,\,\, \,\,\pa_\alpha g \pa_\alpha^{2}B,\,\,\,\, \pa_\alpha g \pa^{2}_{\alpha\rho}B, \,\,\,\, \pa_\rho g \pa^{2}_{\alpha} B,\,\,\,\, g \pa^{2}_\alpha\pa_\rho B,\,\,\,\, \pa_\alpha g\pa^{2}_\rho B,
\,\,\,\, \pa_\rho g \pa^{2}_{\alpha\rho}B,\,\,\,\, g\pa^{2}_\rho\pa_\alpha B,\,\,\,\, \pa_\rho g \pa^{2}_\rho B, \,\,\,\, g\pa^{3}_\rho B.
\end{align*}
Group 2  consists of the terms
\begin{align*}
\pa^{3}_{\alpha}g B,\,\, \,\, \pa^{2}_\alpha g\pa_\alpha B,\,\, \,\, \pa^{2}_\alpha g\pa_\rho B,\,\, \,\, \pa^{2}_{\alpha\rho}g\pa_\alpha B,\,\, \,\, \pa^{2}_\rho\pa_\alpha g B, \,\, \,\,  \pa^{2}_\rho g \pa_\alpha B,\,\, \,\, \pa^{2}_{\alpha\rho}g\pa_\rho B,\,\, \,\, \pa^{2}_\rho g\pa_\rho B,\,\, \,\,  \pa^{3}_\rho g B.
\end{align*}
It is easy to check that expression \eqref{deriv} is given by a linear combination of the terms in group 1 and group 2. On one hand, since in group 1 there is no more than one derivative acting on $g$ and
$$||g||_{L^\infty(\overline{\Omega_a})},\quad ||\pa_\alpha g||_{L^\infty(\overline{\Omega_a})}, \quad ||\pa_\rho g||_{L^\infty(\overline{\Omega_a})} \leq C(\delta, a)$$
in order to bound its terms we need to estimate
\begin{align*}
 \pa^{3}_\alpha B,\,\, \,\, \pa_\alpha^{2}B,\,\,\,\,  \pa^{2}_{\alpha\rho}B, \,\,\,\,  \pa^{2}_\alpha\pa_\rho B,\,\,\,\, \pa^{2}_\rho B,\,\,\,\,  \pa^{2}_\rho\pa_\alpha B,\,\,\,\,  \pa^{3}_\rho B.
\end{align*}
On the other hand, since
\begin{align*}
\pa^{2}_\alpha g,\quad \pa^{2}_\rho g,\quad \pa^{2}_{\alpha\rho}g, \quad \pa^{3}_\alpha g,\quad \pa^{3}_\rho g,\quad \pa^{2}_\alpha\pa_\rho g,\quad \pa^{2}_\rho \pa_\alpha g,
\end{align*}
have $L^2$-norms bounded by some constant depending on $\delta$ and $a$, in order to estimate the terms in group 2 we just need to control the $L^\infty$ norms of $B$, $\pa_\alpha B$, $\pa_\rho B$.  These norms are bounded by $||B||_{H^3(\overline{\Omega_a})}$.

In addition since the $L^2$-norm of $B[f,x]$ is easy to control we just have to estimate the $L^2$-norms of the derivatives of order 3 of $B[f,x]$.
\begin{lemma}\label{d3B} Let $r = \rho + \overline{r}$, where $\overline{r} \in V$,  and $x = r(\cos(\al),\sin(\al))$, the derivatives $\pa^{3}_{\sigma_1,\sigma_2,\sigma_3}B[f,x]$ where $\sigma_i$ is either $\alpha$ or $\rho$, with $i=1,2,3$, are in $L^2$ with norm bounded by a constant $C$ just depending on $\delta$, $a$ and $||f||_{C^4}$.
\end{lemma}
\begin{proof}
This lemma  will be proved by using the following lemma:
\begin{lemma}\label{cuerdaarco} Let $r = \rho + \overline{r}$, where $\overline{r} \in V$,  and $x = r(\cos(\al),\sin(\al))$, then there exists a constant $c(a,\delta)>0$ such that
\begin{align*}
|x(\alpha,\rho)-x(\alpha-\alpha',\rho-\rho')|^2\geq c(a,\delta)\left(\alpha'^2+\rho'^2\right).
\end{align*}
\end{lemma}
\begin{proof}
Because of the definition we have that
\begin{align*}
|\Delta x'|^2=r^2+r''^2-2rr''\cos(\alpha')=(\Delta r')^2+4 r r''\sin^2\left(\frac{\alpha'}{2}\right).
\end{align*}
Now we notice that $r=\rho+\overline{r}$ and then, for $\rho\in (1-a,1)$, there exists $c_0(a,\delta)>0$ such that $r\geq c_0(a,\delta)$, where $c_0(a,\delta)$ is increasing with $a$ and decreasing with $\delta$. Since $\rho-\rho'$  belongs to $(1-a,1)$, $r''\geq c_0(a,\delta)$ too. Both inequalities together yield
\begin{align*}
|\Delta x'|^2\geq (r-r'')^2+ c_0(a,\delta)\sin^2\left(\frac{\alpha'}{2}\right).
\end{align*}
In addition, $r-r'=\rho'+ \Delta\overline{r}'$, and $|\overline{r}'|\leq C(\delta)(|\alpha'|+|\rho'|)$ where $C(\delta)\to 0$  when $\delta\to 0$. Then
\begin{align*}
(r-r'')^2\geq\rho'^2-C(\delta)\left(\alpha'^2+\rho'^2\right)
\end{align*}
where $C(\delta)\to 0$  when $\delta\to 0$. Therefore we obtain that, by making $\delta$ small enough
\begin{align*}
|\Delta x'|^2\geq c(a,\delta)\left(\alpha'^2+\rho'^2\right).
\end{align*}
\end{proof}
Let $\partial$ mean differentiation with respect to either $\alpha$ or $\rho$. Then, the derivatives $\pa^{3} \left(f_\rho''\frac{\Delta x'_\alpha}{|\Delta x'|}\right)$,  consist of terms of the form
\begin{align*}
&\pa^{3}f_\rho''\frac{\Delta x'_\alpha}{|\Delta x|}, \quad f''_\rho \pa^{3}\Delta x_\alpha'\frac{1}{|\Delta x'|},\quad f_\rho''\Delta x_\alpha' \pa^{3}\frac{1}{|\Delta x'|},\\
& \pa^{2}f''_\rho \pa \Delta x'_\alpha \frac{1}{|\Delta x'|},\quad \pa^{2}f''_\rho \Delta x_\alpha'\pa \frac{1}{|\Delta x'|},\quad \pa f''_\rho \pa^{2}\Delta x'_\alpha\frac{1}{|\Delta x'|},\\
&  f''_\rho\pa^2 \Delta x'_\alpha \pa\frac{1}{|\Delta x|},  \quad \pa f''_\rho \Delta x'_\alpha \pa^{2}\frac{1}{|\Delta x'|},\quad  f''_\rho \pa \Delta x'_\alpha \pa^{2}\frac{1}{|\Delta x'|}\\
&\pa f''_\rho \pa \Delta x'_\alpha \pa \frac{1}{|\Delta x'|}.
\end{align*}
Since $f\in C^4$, Lemma \ref{cuerdaarco}, the fact that
$$\int_{\rho-1}^{\rho-(1-a)}\intpi\frac{1}{\sqrt{\alpha'^2+\rho'^2}}d\alpha'd\rho'\leq C $$
and that $||\overline{r}||_{C^2}\leq C(\delta)$ we obtain that the terms in $\pa^{3}B$ coming from  $\pa^{3} f''_\rho\frac{\Delta x_\alpha'}{|\Delta x'|}$ and $\pa^{2} f_\rho'' \pa \Delta x_\alpha' \frac{1}{|\Delta x'|}$ are in $L^\infty$ with $L^\infty-$norm bounded by some constant $C\left(a,\delta,||f||_{C^4(\overline{\Omega_a})}\right)$.

The terms in $\pa^{3}B$ coming from $f_\rho''\pa^{3}\Delta x_\alpha'\frac{1}{|\Delta x'|}$ and $\pa f_\rho''\pa^{2}\Delta x'_\alpha \frac{1}{|\Delta x'|}$ all can be bounded in $L^2$ in the following way. Again we will use the bound for $f$ in $C^4$ and Lemma \ref{cuerdaarco}. Let us focus on
\begin{align}\label{i1i2}
\int_{-\infty}^\infty \int_{-\pi}^\pi f''_\rho \frac{\pa^3\Delta x_\alpha}{|\Delta x'|}d\alpha' d\rho'=\pa^{3}x_\alpha\int_{\rho-1}^{\rho-(1-a)}\int_{-\pi}^\pi f''_\rho \frac{1}{|\Delta x'|}d\alpha'd\rho'-\int_{-\infty}^{\infty}\int_{-\pi}^\pi f''_\rho \frac{\pa^{3}x''_\alpha}{|\Delta x'|}d\alpha'd\rho'\equiv I_1+I_2.
\end{align}
Since
\begin{align*}
\left|\int_{\rho-1}^{\rho-(1-a)}\int_{-\pi}^\pi f''_\rho \frac{1}{|\Delta x'|}d\alpha'd\rho'\right|\leq C\left(a,\delta, ||f||_{C^4}\right)
\end{align*}
we have that $||I_1||_{L^2(\overline{\Omega_a})}\leq C\left(a,\delta, ||f||_{C^4}\right) ||\pa^{3}x_\alpha||_{L^2(\overline{\Omega_a})}$.

In order to bound $I_2$ we notice that, after a change of variables,
\begin{align*}
|I_2|=\left|\int_{1-a}^1\int_{-\pi}^\pi \frac{f'_\rho}{|\Delta x|}\pa^{3}x_\alpha' d\alpha'd\rho'\right|\leq C(a,\delta)\int_{1-a}^1\int_{-\pi}^\pi \frac{|f'_\rho|}{|\sqrt{(\alpha-\alpha')^2+(\rho-\rho')^2}|}|\pa^{3}x_\alpha'| d\alpha'd\rho'
\end{align*}
and therefore Young's inequality applies to yield $$||I_2||_{L^2(\overline{\Omega_a})}\leq C\left(a,\delta, ||f||_{C^4}\right) ||\pa^{3}x_\alpha||_{L^2(\overline{\Omega_a})}.$$

It remains to bound the terms with derivatives acting on the factor $\frac{1}{|\Delta x'|}$.

We first will deal with the terms in $\pa^{3}B$  with a factor $\pa\frac{1}{|\Delta x'|}$ i.e., the terms coming from $\pa^{2}_\rho f''_\rho\Delta x_\alpha' \pa \frac{1}{|\Delta x'|}$, $f''_\rho\pa^{2}\Delta x_\alpha'\pa \frac{1}{|\Delta x'|}$ and $\pa f''_\rho \pa \Delta x'_\alpha \pa \frac{1}{|\Delta x'|}$. Just a computation shows that
\begin{align*}
\pa\frac{1}{|\Delta x'|}=-\frac{\pa\Delta x'\cdot \Delta x'}{|\Delta x'|^3}
\end{align*}
and therefore by Lemma \ref{cuerdaarco} and because of  $x\in C^2$ we have that
\begin{align*}
\left|\pa\frac{1}{|\Delta x'|}\right|\leq C(a,\delta)\frac{1}{\sqrt{\alpha'^2+\rho'^2}}.
\end{align*}
Therefore the terms in $\pa^{3}B$ coming from $\pa^{2}_\rho f''_\rho \Delta x_\alpha' \pa \frac{1}{|\Delta x'|}$, and $\pa f''_\rho \pa \Delta x'_\alpha \pa \frac{1}{|\Delta x'|}$ are actually bounded in $L^\infty$. The term coming from $f''_\rho\pa^{2}\Delta x_\alpha'\pa \frac{1}{|\Delta x'|}$ is bounded as we did before for $I_1+I_2$ in \eqref{i1i2}.

The term with two derivatives of the factor $\frac{1}{|\Delta x'|}$ which causes more difficulties is $f_\rho''\pa \Delta x_\alpha \pa^{2}\frac{1}{|\Delta x'|}$. We will use the following embedding: since $x_\alpha\in H^{3,3}(\Omega_a)=H^3(\Omega_a)$ we know that $x_\alpha\in C^{1+\gamma}(\overline{\Omega_a})$ with $C^{1+\gamma}(\overline{\Omega_a})-$norm bounded for some constant $C(\delta)$. Then, since,
\begin{align*}
\pa^{2}\frac{1}{|\Delta x'|}=-\frac{\pa^2\Delta x'\cdot \Delta x' }{|\Delta x'|^3}-\frac{\pa \Delta x' \cdot \pa \Delta x' }{|\Delta x'|^3}+3\frac{\pa\Delta x' \cdot \Delta x' \pa\Delta x'\cdot\Delta x'}{|\Delta x'|^5}
\end{align*}
we can use the previous embedding to estimate
\begin{align*}
\left|\pa\Delta x'_\alpha\pa^{2}\frac{1}{|\Delta x'|}\right|\leq C(a,\delta)\left(\frac{1}{\left(\alpha'^2+\rho'^2\right)^{1-\frac{\alpha}{2}}}|\pa^{2}\Delta x'|+\frac{1}{\sqrt{\alpha'^2+\rho'^2}}|\pa\Delta x'_\alpha|\right).
\end{align*}
Therefore by using again Young's inequality we bound the term $$\int_{-\infty}^\infty \int_{-\pi}^\pi f_\rho'\Delta x_\alpha' \pa^{2}\frac{1}{|\Delta x'|}d\alpha' d\rho'\leq C(a,\delta,||f||_{C^4}).$$
Finally we compute three derivatives of the factor $\frac{1}{|\Delta x'|}$. The terms arising from these derivatives are linear combinations of terms with the structures
\begin{align*}
& \frac{\pa^{3}\Delta x'\cdot \Delta x'}{|\Delta x'|^3},\quad\frac{\pa^{2}\Delta x'\cdot \pa\Delta x'}{|\Delta x'|^3}, \frac{\pa^{2}\Delta x'\cdot \Delta x'\pa\Delta x'\cdot \Delta x'}{|\Delta x'|^5},\\ & \frac{\pa\Delta x'\cdot \pa \Delta x'\pa\Delta x'\cdot \Delta x'}{|\Delta x'|^5},\quad
\frac{\pa\Delta x'\cdot \Delta x'\pa\Delta x'\cdot \Delta x'\pa\Delta x'\cdot \Delta x'}{|\Delta x'|^7}
\end{align*}
and then, a similar analysis we did before helps us to prove that
\begin{align*}
\int_{-\infty}^\infty\int_{-\pi}^\pi f_\rho'\Delta x'_\alpha \pa^{3}\frac{1}{|\Delta x'|} d\alpha' d\rho',
\end{align*}
is bounded in $L^2$ for a constant $C(a,\delta,||f||_{C^4}$). This concludes the proof of Lemma \ref{d3B}.\end{proof}
Thus we have proven that \eqref{Fenh3} holds. This finishes the proof of Lemma \ref{lema42}.
\end{proof}

Therefore, in order to prove that, $$\overline{F}\,:\, V\times (-1,1)\to H^{3,3}_{3, \text{odd}}(\Omega_a)$$
we just need to show that if \begin{align*} \overline{r}(-\alpha,\rho) = &\overline{r}(\alpha,\rho)\end{align*}
and
\begin{align*} \overline{r}\left(\alpha+\frac{2n\pi}{3},\rho\right) = &\overline{r}(\alpha,\rho)\end{align*} for $n\in \N$, then
\begin{align*} \overline{F}(-\alpha,\rho) =- &\overline{F}(\alpha,\rho)\end{align*}
and
\begin{align*} \overline{F}\left(\alpha+\frac{2n\pi}{3},\rho\right) = &\overline{F}(\alpha,\rho)\end{align*} for $n\in \N$. These two properties are easy to check.

The last part of this section will be to check that the hypothesis 1 in the C-R theorem holds. This fact is a consequence of radial functions being stationary solutions of the SQG equation but let us check it on \eqref{functionalF}. If we take $\overline{r}=0$, i.e., $r=\rho$, the only term in \eqref{functionalF} that is not trivially zero is the last integral. In order to check that this integral is zero we just notice that the integrand is odd in $\alpha$.

\subsection{Step 2. The partial derivatives of the functional $F$}

We need to prove the existence and the continuity of the Gateaux derivatives $\pa_{\overline{r}} \overline{F}[\overline{r},\lambda]$,$\pa_\lambda \overline{F}[\overline{r},\lambda]$ and $\pa^{2}_{\overline{r}\lambda}\overline{F}[\overline{r},\lambda]$. We have the following lemma

\begin{lemma}\label{partialderivatives} For all $\overline{r}\in V^\delta$ and $\mu\in \RR$ the partial derivatives  $\pa_{\overline{r}} \overline{F}[\overline{r},\lambda]$,$\pa_\lambda \overline{F}[\overline{r},\lambda]$ and $\pa^{2}_{\overline{r}\lambda}\overline{F}[\overline{r},\lambda]$ exist and are continuous. In addition
\begin{align*}
&\partial_r\overline{F}[0,\mu]\tilde{r}(\alpha,\rho)=\partial_rF[\rho,\lambda]\tilde{r}(\alpha,\rho)\\&=\lambda \tilde{r}_{\al}(\al,\rho)  - \frac{1}{2\pi} \int \int \frac{f_\rho(\rho')\cos(\al-\al')(\tilde{r}_{\al}(\al',\rho')-\tilde{r}_{\al}(\al,\rho))}{\sqrt{\rho^2+(\rho')^2-2\rho\rho'\cos(\al-\al')}} d\al' d\rho' \\
& + \frac{\tilde{r}_{\al}(\al,\rho)}{2\pi} \int \int \frac{f_\rho(\rho')\cos(\al-\al')(\rho'-\rho)}{\rho\sqrt{\rho^2+(\rho')^2-2\rho\rho'\cos(\al-\al')}} d\al' d\rho' \\
& - \frac{1}{2\pi} \int \int \frac{f_\rho(\rho')(\rho-\rho'\cos(\al-\al'))\sin(\al-\al')(\rho\tilde{r}(\al',\rho') - \rho'\tilde{r}(\al,\rho))}{(\rho^2+(\rho')^2-2\rho\rho'\cos(\al-\al'))^{\frac32}} d\al' d\rho' \\
&\equiv I_{0}[\tilde{r},\lambda](\alpha,\rho)  + I_{1}[\tilde{r}](\alpha,\rho) + I_{2}[\tilde{r}](\alpha,\rho) + I_{3}[\tilde{r}](\alpha,\rho).
\end{align*}
\end{lemma}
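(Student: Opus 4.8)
\textbf{Proof plan for Lemma \ref{partialderivatives}.}

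The plan is to treat the functional $\overline{F}$ (equivalently $F$ via the shift $r=\rho+\overline{r}$, $\lambda=\lambda_3+\mu$) as a sum $F_1+F_2+F_3+F_4$, exactly as in the proof of Lemma \ref{lema42}, and to differentiate each piece. For $F_1[r,\lambda]=\lambda r_\alpha$ the Gateaux derivative with respect to $r$ in the direction $\tilde r$ is simply $\lambda\tilde r_\alpha$, the derivative with respect to $\lambda$ is $r_\alpha$, and the mixed second derivative is $\tilde r_\alpha$; all of these are trivially bounded linear maps from $H^{4,3}$ to $H^{3,3}$ (the $\alpha$-derivative costs one degree of $\alpha$-regularity, which is built into the definition of the spaces), and continuity in $(\overline r,\mu)$ is immediate. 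So the whole content is the differentiation of the singular-integral terms $F_2,F_3,F_4$. For these the key point is that each integrand has the form $f_\rho(\rho')\cdot(\text{smooth rational function of }r,r',r_\alpha,r_\alpha',\cos(\alpha-\alpha'),\sin(\alpha-\alpha'))\cdot|x(\alpha,\rho)-x(\alpha',\rho')|^{-1}$, so I would write the Gateaux derivative formally by the product/quotient rule, obtaining a finite sum of terms of the same singular structure (with at most one extra power of the kernel, i.e. $|\Delta x|^{-2}$ times a factor vanishing like $|\Delta x|$, hence effectively still $|\Delta x|^{-1}$-type after the cancellation used in Lemma \ref{cuerdaarco}), and then justify that this formal derivative is (i) well-defined as an element of $H^{3,3}(\Omega_a)$ and (ii) the genuine Gateaux derivative by a difference-quotient estimate.

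Concretely, the steps I would carry out are: (1) Fix $\overline r\in V$, so that by the argument already given (the embedding $H^{4,3}\subset C^2$, Lemma \ref{cuerdaarco}) we have $r\ge c_0>0$, $r_\rho\ge c_1>0$ and the chord–arc bound $|\Delta x'|^2\ge c(a,\delta)(\alpha'^2+\rho'^2)$ uniformly on $V$. (2) Write out the formal expression for $\pa_r F_i[\overline r+\rho,\lambda]\tilde r$ for $i=2,3,4$; collect terms by how many derivatives fall on $\tilde r$, on the rational factor, and on $1/|\Delta x|$. (3) Show each such term lies in $H^{3,3}(\Omega_a)$, reusing \emph{verbatim} the estimates of Lemma \ref{d3B}: the decompositions $\pa\frac1{|\Delta x'|}=-\frac{\pa\Delta x'\cdot\Delta x'}{|\Delta x'|^3}$, etc., Lemma \ref{cuerdaarco} to convert kernel powers into powers of $(\alpha'^2+\rho'^2)^{-1/2}$, the integrability $\int_{\rho-1}^{\rho-(1-a)}\int_{-\pi}^{\pi}(\alpha'^2+\rho'^2)^{-1/2}\,d\alpha'd\rho'\le C$, Young's inequality for the worst (convolution-type) terms, and the Hölder embedding $x_\alpha\in C^{1+\gamma}$. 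Since $\pa_r F_i$ is linear in $\tilde r$, the bound is of the form $\|\pa_r F_i[\overline r+\rho,\lambda]\tilde r\|_{H^{3,3}}\le C(a,\delta,\|f\|_{C^4})\|\tilde r\|_{H^{4,3}}$. (4) Verify it is the true Gateaux derivative: form $\overline F[\overline r+s\tilde r,\mu]-\overline F[\overline r,\mu]-s\,\pa_r\overline F[\overline r,\mu]\tilde r$, expand using the mean value theorem in $s$ inside the integrals (the integrand is $C^1$ in $s$ for $s$ small because $r+s\tilde r$ keeps the chord–arc and positivity bounds), and bound the remainder by $o(s)$ in $H^{3,3}$ using the same family of estimates. (5) For $\pa_\lambda\overline F$: only $F_1$ depends on $\lambda=\lambda_3+\mu$, so $\pa_\mu\overline F[\overline r,\mu]=r_\alpha=\rho_\alpha+\overline r_\alpha=\overline r_\alpha$, manifestly in $H^{3,3}$ and continuous. (6) For $\pa^2_{\overline r\mu}\overline F$: differentiating $\pa_\mu\overline F=\overline r_\alpha$ in $\overline r$ gives the constant (in $\mu$) linear map $\tilde r\mapsto\tilde r_\alpha$, continuous from $H^{4,3}$ to $H^{3,3}$; so the mixed derivative exists and is continuous. (7) Continuity of $\pa_r\overline F$ in $(\overline r,\mu)$: the dependence on $\mu$ is affine (through $F_1$), and the dependence on $\overline r$ enters the singular kernels only through $r,r',r_\alpha,r_\alpha'$ evaluated pointwise; since $\overline r\mapsto(r,r_\alpha)$ is continuous $H^{4,3}\to C^{1}$ and all denominators stay bounded below, the map $\overline r\mapsto\pa_r\overline F[\overline r,\mu]\in\mathcal L(H^{4,3},H^{3,3})$ is continuous by dominated convergence combined with the uniform estimates of step (3). (8) Finally, specialize to $\overline r=0$, i.e. $r=\rho$, $r_\alpha=0$: the terms $F_2,F_3$ have a global factor $r_\alpha$ in front (see \eqref{functionalF}), so only the $\cos$ and $\sin$ integrals survive in a specific way; plugging $|x(\alpha,\rho)-x(\alpha',\rho')|^2=\rho^2+\rho'^2-2\rho\rho'\cos(\alpha-\alpha')$ and carrying out the product-rule differentiation of the last ($\sin$) term gives precisely the displayed formula $I_0+I_1+I_2+I_3$; this is a direct, if somewhat lengthy, algebraic computation.

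The main obstacle is step (3)/(4): controlling the terms in which two or three derivatives land on the singular factor $1/|\Delta x|$ after one more derivative has been spent on forming the Gateaux derivative, since naively this produces kernels as singular as $|\Delta x|^{-3}$ or $|\Delta x|^{-4}$. The resolution is the same cancellation mechanism as in Lemma \ref{d3B}: each derivative of $1/|\Delta x|$ comes paired with a factor $\pa\Delta x'$, and $\Delta x'=x(\alpha,\rho)-x(\alpha-\alpha',\rho-\rho')$ together with $x\in C^2$ (indeed $x_\alpha\in C^{1+\gamma}$) makes $|\pa\Delta x'|\lesssim(\alpha'^2+\rho'^2)^{1/2}$, $|\pa^2\Delta x'|\lesssim(\alpha'^2+\rho'^2)^{\gamma/2}$, so that after using Lemma \ref{cuerdaarco} the effective singularity is never worse than $(\alpha'^2+\rho'^2)^{-1/2}$ (locally integrable in the plane), or is handled by Young's inequality when it has convolution structure against $f_\rho'$. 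The rest is bookkeeping of which of the finitely many term-types falls into the $L^\infty$ class and which needs the $L^2$/Young argument, exactly mirroring the case analysis already completed for $B[f,x]$; no new idea beyond Lemmas \ref{cuerdaarco} and \ref{d3B} is required.
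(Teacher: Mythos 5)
Your proposal is correct and follows essentially the same route as the paper: the paper's own proof of this lemma is only two sentences (the $\lambda$-derivatives are trivial, and continuity in $\overline r$ follows from $f\in C^4$ having compact support), implicitly relying on exactly the decomposition $F_1+F_2+F_3+F_4$ and the estimates of Lemmas \ref{cuerdaarco} and \ref{d3B} that you spell out in detail. The only tiny slip is that $F_2$ carries the difference $r_{\al}(\al',\rho')-r_{\al}(\al,\rho)$ inside the integral rather than a global $r_{\al}$ prefactor, which does not affect your conclusion at $r=\rho$.
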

\begin{proof} The lemma is trivial for the derivatives involving $\lambda$.  The continuity of the derivative with respect to $\overline{r}$ also follows since $f\in C^4$ and is compactly supported.
\end{proof}

\subsection{Step 3. Analysis of the linear operator} \label{checking3}
Now we have to study the dimension of both the kernel and image of the operator $\pa_r F[\rho,\lambda]$. We will first show that for a certain value of $\lambda$ that we will call $\lambda_3$ the kernel of the operator $\pa_r F[\rho,\lambda_3]$ is one dimensional. After that we will  show that the codimension of the image of $\pa_r F[\rho,\lambda_3]$ is also one dimensional. This will finish the checking of the hypothesis 3 in the C-R theorem. Propositions \ref{kernel} and \ref{codimension} are the main results of this section.

\begin{prop}\label{kernel}There exists a  pair $(\tilde{r}_0,\lambda_3)\in H^{4,3}_{3,\text{even}}(\Omega_a)\times \RR$, with $\tilde{r}_0$ not identically zero, such that
\begin{align}\label{kerneleq}
\pa_r F[\rho,\lambda_3]\tilde{r}_0(\alpha,\rho)=0.
\end{align}
Moreover $\tilde{r}_0$ is unique modulo multiplication by constant.
\end{prop}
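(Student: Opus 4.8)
The plan is to diagonalize the linear operator $\pa_r F[\rho,\lambda]$ using the Fourier series in $\alpha$. Since we work in $H^{4,3}_{3,\text{even}}(\Omega_a)$, write $\tilde r(\alpha,\rho)=\sum_{m\ge 1}\hat r_m(\rho)\cos(3m\alpha)$. Because $\pa_r F[\rho,\lambda]$ has a convolution structure in $\alpha$ (it only involves differences $\alpha-\alpha'$), each Fourier mode $\cos(3m\alpha)$ is mapped to a multiple of $\sin(3m\alpha)$ (consistent with the target space $H^{3,3}_{3,\text{odd}}$), and the operator acts on the profile $\hat r_m(\rho)$ by an integral operator in $\rho$ alone. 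Concretely, substituting the ansatz into the formula for $\pa_r F[\rho,\lambda]\tilde r$ from Lemma \ref{partialderivatives} and integrating the kernels against $\cos(3m(\alpha-\alpha'))$ (using the standard Fourier expansion of $(\rho^2+\rho'^2-2\rho\rho'\cos\gamma)^{-1/2}$ in terms of Legendre functions / elliptic integrals), one obtains for each $m$ a linear operator $L_m(\lambda)\colon \hat r_m \mapsto L_m(\lambda)\hat r_m$ acting on functions of $\rho\in(1-a,1)$, where $L_m(\lambda) = 3m\lambda\,\mathrm{Id} - K_m$ with $K_m$ a fixed (independent of $\lambda$) bounded integral operator built from $f_\rho$ and the Fourier coefficients of the Newtonian kernel. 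Thus $\tilde r_0\in\mathcal N(\pa_r F[\rho,\lambda])$ iff for each $m$ either $\hat r_m=0$ or $3m\lambda$ is an eigenvalue of $K_m$ with eigenfunction $\hat r_m$.

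Next I would identify the candidate eigenvalue $\lambda_3$. For $m=1$ one studies the operator $K_1$ on $L^2(1-a,1)$ (or a suitable Sobolev space so that the eigenfunction lies in $H^{4,3}$): show it is compact and self-adjoint with respect to an appropriate weighted inner product — the weight coming from $f_\rho$ — so that it has a discrete real spectrum. Then $\lambda_3$ is defined by $3\lambda_3 = \nu_1$, where $\nu_1$ is a chosen eigenvalue of $K_1$, and $\tilde r_0(\alpha,\rho) = \hat r_1(\rho)\cos(3\alpha)$ with $\hat r_1$ the corresponding eigenfunction. Existence of a \emph{nontrivial} such eigenfunction is exactly the first bullet flagged in the introduction as nonobvious: since there is no closed-form expression, I expect this to be established by a combination of (i) a perturbative/limiting argument comparing $K_1$ to the $a=0$ patch operator (whose spectrum is explicit, recovering Burbea-type eigenvalues), together with (ii) rigorous interval-arithmetic bounds locating $\nu_1$ in a small explicit interval and verifying it is a genuine eigenvalue (e.g. via a contraction/Newton-Kantorovich argument on the finite-dimensional Fourier-Galerkin truncation of $K_1$ in $\rho$, controlling the tail).

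The main obstacle — and the heart of the section — is the uniqueness (simplicity) claim: one must show that $\lambda_3$ is chosen so that (a) $\nu_1$ is a \emph{simple} eigenvalue of $K_1$, and (b) for \emph{no} other $m\ge 2$ is $3m\lambda_3 = 3\lambda_3\cdot m$ an eigenvalue of $K_m$. For (b) one needs quantitative control: show that for $m\ge 2$ the spectrum of $K_m$ is strictly separated from $3m\lambda_3$, which I would do by proving monotonicity/decay estimates on the operator norms and eigenvalues $\|K_m\|$ in $m$ (the Fourier coefficients of the Newtonian kernel decay, so $K_m\to 0$ in norm as $m\to\infty$, handling all large $m$ at once), reducing to finitely many $m$ that are then dispatched by explicit interval-arithmetic eigenvalue enclosures. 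For (a), simplicity of $\nu_1$, I would again use a rigorous numerical enclosure of the second-largest-in-modulus eigenvalue of $K_1$ to separate it from $\nu_1$. Assembling these: $\mathcal N(\pa_r F[\rho,\lambda_3])$ is spanned by the single function $\hat r_1(\rho)\cos(3\alpha)$, and one checks $\hat r_1\in H^4(1-a,1)$ (hence $\tilde r_0\in H^{4,3}$) by elliptic regularity for the eigenvalue equation $K_1\hat r_1 = \nu_1\hat r_1$, using $f\in C^4$ and the smoothing properties of the kernel away from the diagonal together with the chord-arc bound of Lemma \ref{cuerdaarco} applied at $\overline r=0$.
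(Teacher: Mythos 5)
Your high-level skeleton (Fourier decomposition in $\alpha$, a one-dimensional spectral problem per mode, computer-assisted localization of the mode-$3$ eigenvalue, exclusion of higher modes by kernel monotonicity plus numerics, and a regularity bootstrap) matches the paper. But there is a genuine structural gap in how you set up the per-mode problem. After projecting onto $\cos(3m\alpha)$, the linearized operator is \emph{not} of the form $3m\lambda\,\mathrm{Id}-K_m$ with $K_m$ compact: equation \eqref{simplificada} reads $\tilde{I}(\rho)B+\tilde{T}^{3m}B=\lambda B$, where $\tilde{I}(\rho)$ is a non-constant \emph{multiplication} operator (coming from the local terms of the linearization, e.g.\ $I_2$ in Lemma \ref{partialderivatives}) and only $\tilde{T}^{3m}$ is compact. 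Consequently the spectrum is not discrete --- the essential spectrum is the range of $\tilde{I}$ --- so your assertion that the mode operator ``has a discrete real spectrum'' fails, and ``$3m\lambda$ is an eigenvalue of a compact $K_m$'' is not the correct reduction. The whole architecture of the paper's proof is organized around this term: the eigenvalue must be produced strictly below $\min\tilde{I}$ (the computer-assisted bounds $\min\tilde{I}>1.2655$ versus $\lambda_3<0.4117$), and the strict positivity $\tilde{I}-\lambda_3>0$ (Lemma \ref{lemmaImayorquelambda}) is precisely what drives both the regularity bootstrap and the coercivity of $\Theta^{3m}-\lambda_3$ for $m\ge 2$. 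Omitting $\tilde{I}$ removes the mechanism that makes each of these steps work.

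A second gap concerns self-adjointness and the construction of the eigenpair. The operator is not symmetric on $L^2((1-a,1))$; your proposed weighted inner product does exist in principle (the symmetrizing weight is proportional to $|f_\rho(\rho)|\rho^4$), but it vanishes to high order at both endpoints of $(1-a,1)$, so the weighted space degenerates exactly where the eigenfunction must be controlled, and you do not address this. The paper instead stays in unweighted $L^2$, splits $\Theta^3$ into symmetric and antisymmetric parts, takes the explicit trial function $B_{sj}$ with a computer-verified residual (Lemma \ref{lemacotae}), and produces the true eigenpair by Lax--Milgram on $B_{sj}^\perp$ combined with a continuity and sign-change argument for the auxiliary scalar $d(\lambda)$ (Lemmas \ref{ddl}, \ref{lemavertice}, \ref{lemacotasABC}); simplicity then follows from the gap $c^*>\lambda_3$ of Lemma \ref{lemacotascestrella}. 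Your alternative of perturbing from the $a=0$ patch operator is not used and is problematic, since the SQG patch limit is singular and no quantitative comparison is available. By contrast, your treatment of the modes $m\ge 2$ (monotonicity of the kernels in $m$, Lemma \ref{positivo}, plus one numerical enclosure) and the $H^3$ bootstrap are essentially the paper's arguments, once the missing $\tilde{I}-\lambda_3>0$ is restored.
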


\begin{proof} The proof of Proposition \ref{kernel} consists of the following steps:
\begin{enumerate}
\item \textbf{The equation for the  radial part.} We introduce in \eqref{kerneleq} the $m$-fold ansatz:
\begin{align*}
 \tilde{r}(\al,\rho) = \rho B(\rho) \cos(3m\al)
\end{align*}
and we obtain an equation for the pair $(B(\rho),\lambda)$, which we will write in the following form:
\begin{align}\label{lineal}\Theta^{m} B-\lambda B=0\quad \text{in $(1-a,1)$}.\end{align}(See the equation \eqref{simplificada} below).
\item \textbf{Existence of solutions of equation \eqref{kerneleq}.} We solve the equation \eqref{lineal} for $m=3$  and find a solution   $(B^3, \lambda_3)\in H^3((1-a,1))\times\RR$ of \eqref{lineal}. Therefore $\tilde{r}_0=\rho B^3(\rho)\cos(3\alpha)\in H^{4,3}_{3,\text{even}}(\Omega_a)$ satisfies \eqref{kerneleq}.
\item \textbf{Uniqueness for the equation \eqref{kerneleq}.} We notice that we still need to show uniqueness  for \eqref{kerneleq}, since, until now, we have that, given $\lambda_3$ there is a unique $B^3$ such that \eqref{lineal} holds. But this fact does not imply that there is only one solution (modulo multiplication by constants), $\tilde{r}_0$, to \eqref{kerneleq}. Indeed, we need to show that the equation $$\pa_r F[\rho,\lambda_3]\left( b^{3m}(\rho)\cos(3m\alpha)\right)=0\quad \text{for $m>1$}$$
    implies $b^{3m}(\rho)=0$ for $m>1$.
\end{enumerate}
\subsubsection{The equation for the radial part}
Taking $\tilde{r}(\alpha, \rho)=\rho B(\rho)\cos(m\alpha)$ we have that $\pa_r F[\rho,\lambda]\tilde{r}(\alpha, \rho)$ is given by the following terms:

\begin{align*}
I_{0}[\tilde{r},\lambda](\alpha,\rho) & = -\lambda m  \rho B(\rho) \sin(m\al) \\
I_{2}[\tilde{r}](\alpha,\rho) & = -\frac{m}{2\pi} B(\rho)\sin(m\al) \inti \intpi \frac{f_\rho(\rho')\cos(\al-\al')(\rho'-\rho)}{\sqrt{\rho^2+(\rho')^2-2\rho\rho'\cos(\al-\al')}} d\al' d\rho' \\
& = -\frac{m}{2\pi} B(\rho)\sin(m\al) \inti f_\rho(\rho')(\rho'-\rho) \left(\intpi \frac{\cos(x)}{\sqrt{\rho^2+(\rho')^2-2\rho\rho'\cos(x)}} dx\right) d\rho' \\
\end{align*}

We move on to $I_1[\tilde{r},\lambda](\alpha,\rho)$. We have that

\begin{align*}
 \tilde{r}_{\al}(\al',\rho') - \tilde{r}_{\al}(\al,\rho) & = -m(\rho'B(\rho')\sin(m\al') - \rho B(\rho) \sin(m\al)) \\
& = -m(\rho'B(\rho')\sin(m\al)\cos(m(\al-\al'))\\& - \underbrace{\rho'B(\rho')\cos(m\al)\sin(m(\al-\al'))}_{\text{will integrate to } 0} - \rho B(\rho) \sin(m\al)).
\end{align*}

Therefore

\begin{align*}
 I_1[\tilde{r}](\alpha,\rho) & = -\frac{m}{2\pi}\sin(m\al) \inti \intpi f_\rho(\rho') \frac{(\rho B(\rho) - \rho' B(\rho') \cos(mx))\cos(x)}{\sqrt{\rho^2+(\rho')^2-2\rho\rho'\cos(x)}}dx d\rho'
\end{align*}

Finally, we develop $I_3[\tilde{r}](\alpha,\rho)$. Using:

\begin{align*}
 \rho \tilde{r}(\al',\rho') - \rho' \tilde{r}(\al,\rho) & = \rho \rho'(B(\rho')\cos(m\al') - B(\rho)\cos(m\al)) \\
& = \rho \rho'(B(\rho')\sin(m(\al-\al'))\sin(m\al)\\ & + \underbrace{B(\rho')\cos(m(\al-\al'))\cos(m\al)  - B(\rho)\cos(m\al)}_{\text{will integrate to } 0})
\end{align*}

This implies that

\begin{align*}
 I_3[\tilde{r}](\alpha,\rho) & = -\frac{1}{2\pi} \sin(m\al) \inti f_\rho(\rho')B(\rho') \left(\intpi \frac{\sin(mx)(\rho-\rho'\cos(x))\rho\rho'\sin(x)}{(\rho^2+(\rho')^2-2\rho\rho'\cos(x))^{\frac32}} dx\right) d\rho'
\end{align*}

Integrating by parts, using that

\begin{align*}
 \frac{\rho\rho'\sin(x)}{(\rho^2+(\rho')^2-2\rho\rho'\cos(x))^{\frac32}} = - \pa_{x}\left(\frac{1}{(\rho^2+(\rho')^2-2\rho\rho'\cos(x))^{\frac12}}\right)
\end{align*}

we get that $I_3[\tilde{r}](\alpha,\rho)$ is given by:

\begin{align*}
 I_3[\tilde{r}](\alpha,\rho) & = -\frac{m}{2\pi} \sin(m\al) \inti f_\rho(\rho')B(\rho') \left(\intpi \frac{\cos(mx)(\rho-\rho'\cos(x))}{\sqrt{\rho^2+(\rho')^2-2\rho\rho'\cos(x)}} dx\right) d\rho' \\
 &  -\frac{1}{2\pi} \sin(m\al) \inti f_\rho(\rho')\rho'B(\rho') \left(\intpi \frac{\sin(mx)\sin(x)}{\sqrt{\rho^2+(\rho')^2-2\rho\rho'\cos(x)}} dx\right) d\rho' \\
\end{align*}

Putting all the pieces together and dividing by $\sin(m\al)$, the equation we want to solve is:

\begin{align}\label{sinsimplificar}
 & B(\rho)\left( -\lambda m \rho -\frac{m}{2\pi} \inti f_\rho(\rho')\rho' \left(\intpi \frac{\cos(x)}{\sqrt{\rho^2+(\rho')^2-2\rho\rho'\cos(x)}} dx\right) d\rho'\right) \\
& + \frac{2m}{2\pi} \inti f_\rho(\rho')\rho'B(\rho') \left(\intpi \frac{\cos(mx)\cos(x)}{\sqrt{\rho^2+(\rho')^2-2\rho\rho'\cos(x)}} dx\right) d\rho'\nonumber \\
& -\frac{m}{2\pi} \inti f_\rho(\rho')\rho B(\rho') \left(\intpi \frac{\cos(mx)}{\sqrt{\rho^2+(\rho')^2-2\rho\rho'\cos(x)}} dx\right) d\rho'\nonumber \\
 &  -\frac{1}{2\pi} \inti f_\rho(\rho')\rho'B(\rho') \left(\intpi \frac{\sin(mx)\sin(x)}{\sqrt{\rho^2+(\rho')^2-2\rho\rho'\cos(x)}} dx\right) d\rho' = 0.\nonumber
\end{align}

The inner integrals can be explicitly calculated in terms of EllipticE and EllipticK functions for any $m$. We can simplify the equation \eqref{sinsimplificar} in the following way. Letting $s(\rho,\rho') = \frac{\rho}{\rho'}$, we obtain

\begin{align*}
\frac{1}{\sqrt{\rho^2 + (\rho')^{2} - 2\rho \rho' \cos(x)}} = \frac{1}{\rho'} \frac{1}{\sqrt{1+\left(\frac{\rho}{\rho'}\right)^{2}-2\frac{\rho}{\rho'}\cos(x)}} = \frac{1}{\rho'} \frac{1}{\sqrt{1+s^2-2s\cos(x)}},
\end{align*}

thus equation \eqref{sinsimplificar} reads:

\begin{align}\label{sinsimplificar2}
 & B(\rho)\left( -\lambda \rho -\frac{1}{2\pi} \inti f_\rho(\rho')\left(\intpi \frac{\cos(x)}{\sqrt{1+s^2-2s\cos(x)}} dx\right) d\rho'\right) \\
& + \frac{2}{2\pi} \inti f_\rho(\rho')B(\rho') \left(\intpi \frac{\cos(mx)\cos(x)}{\sqrt{1+s^2-2s\cos(x)}} dx\right) d\rho'\nonumber \\
& -\frac{1}{2\pi} \inti f_\rho(\rho')\frac{\rho}{\rho'} B(\rho') \left(\intpi \frac{\cos(mx)}{\sqrt{1+s^2-2s\cos(x)}} dx\right) d\rho'\nonumber \\
 &  -\frac{1}{2m\pi} \inti f_\rho(\rho')B(\rho') \left(\intpi \frac{\sin(mx)\sin(x)}{\sqrt{1+s^2-2s\cos(x)}} dx\right) d\rho' = 0.\nonumber
\end{align}

We focus on the term

\begin{align*}
\int_{-\pi}^{\pi} \frac{2\cos(mx)\cos(x)-s\cos(mx)-\frac{1}{m}\sin(mx)\sin(x)}{\sqrt{1+s^2-2s\cos(x)}}dx \equiv T(s).
\end{align*}

We remark that $\cos(mx) = \frac{1}{m} \pa_{x} \sin(mx)$. This implies, on the one hand:

\begin{align*}
\int_{-\pi}^{\pi} \frac{2\cos(mx)\cos(x)}{\sqrt{1+s^2-2s\cos(x)}}dx
& = \frac{1}{m} \int_{-\pi}^{\pi} \frac{2\cos(x)\pa_{x}\sin(mx)}{\sqrt{1+s^2-2s\cos(x)}}dx \\
& = \frac{1}{m} \int_{-\pi}^{\pi} \frac{2\sin(x)\sin(mx)}{\sqrt{1+s^2-2s\cos(x)}}dx
+ \frac{1}{m} \int_{-\pi}^{\pi} \frac{2s\cos(x)\sin(mx)\sin(x)}{(1+s^2-2s\cos(x))^{3/2}}dx \\
\end{align*}

On the other

\begin{align*}
s\int_{-\pi}^{\pi} \frac{\cos(mx)}{\sqrt{1+s^2-2s\cos(x)}}dx
= \frac{s^2}{m}\int_{-\pi}^{\pi} \frac{\sin(mx)\sin(x)}{(1+s^2-2s\cos(x))^{3/2}}dx
\end{align*}

Therefore, $T(s)$ can be transformed into

\begin{align*}
T(s) & = \frac{1}{m} \int_{-\pi}^{\pi} \frac{\sin(x)\sin(mx)}{\sqrt{1+s^2-2s\cos(x)}}dx
+ \frac{1}{m} \int_{-\pi}^{\pi} \frac{2s\cos(x)\sin(mx)\sin(x)}{(1+s^2-2s\cos(x))^{3/2}}dx
\\ & -\frac{s^2}{m}\int_{-\pi}^{\pi} \frac{\sin(mx)\sin(x)}{(1+s^2-2s\cos(x))^{3/2}}dx
\\ & = \frac{1}{m} \int_{-\pi}^{\pi} \frac{\sin(mx)\sin(x)}{(1+s^2-2s\cos(x))^{3/2}}dx
= \frac{1}{s}\int_{-\pi}^{\pi} \frac{\cos(mx)}{\sqrt{1+s^2-2s\cos(x)}}dx.
\end{align*}

Substituting into \eqref{sinsimplificar2}, we have to solve:

\begin{align}\label{sinsimplificar3}
 & B(\rho)\left( -\lambda \rho -\frac{1}{2\pi} \inti f_\rho(\rho')\left(\intpi \frac{\cos(x)}{\sqrt{1+s^2-2s\cos(x)}} dx\right) d\rho'\right) \\
& + \frac{1}{2\pi} \inti f_\rho(\rho')B(\rho') \frac{\rho'}{\rho}\int_{-\pi}^{\pi} \frac{\cos(mx)}{\sqrt{1+s^2-2s\cos(x)}}dx d\rho' = 0.\nonumber
\end{align}

From now on, we will call

\begin{align*}
I(\rho) & = -\frac{1}{2\pi} \inti f_\rho(\rho')\left(\intpi \frac{\cos(x)}{\sqrt{1+\left(\frac{\rho}{\rho'}\right)^2-2\left(\frac{\rho}{\rho'}\right)\cos(x)}} dx\right) d\rho' \\
T^mB(\rho) & = \frac{1}{2\pi} \inti f_\rho(\rho')B(\rho') \frac{\rho'}{\rho}\int_{-\pi}^{\pi} \frac{\cos(mx)}{\sqrt{1+\left(\frac{\rho}{\rho'}\right)^2-2\left(\frac{\rho}{\rho'}\right)\cos(x)}}dx d\rho'
\end{align*}

We will also define

\begin{align*}
K^{m}(s) & = \frac{1}{2\pi s} \int_{-\pi}^{\pi} \frac{\cos(mx)}{\sqrt{1+s^2-2s\cos(x)}}dx  \\
T^{m} B(\rho) & = \int_{-\infty}^{\infty} f_\rho(\rho') B(\rho')K^{m}\left(\frac{\rho}{\rho'}\right) d\rho'
\end{align*}

This allows us to write \eqref{sinsimplificar3} as:

\begin{align}
\label{simplificada}
\tilde{I}(\rho)B(\rho) + \tilde{T}^mB(\rho) = \lambda B(\rho),\quad \text{in $(1-a,1)$,}
\end{align}

where $\tilde{T}^m = \frac{1}{\rho} T^m$ and $\tilde{I}=\frac{1}{\rho}I(\rho)$. Thus, using the notation of \eqref{lineal},
$$\Theta^m B(\rho)\equiv  \tilde{I}(\rho)B(\rho) + \tilde{T}^mB(\rho).$$

\subsubsection{Existence of an element in the kernel of $\pa_rF[\rho,\lambda_3]$}
In this part we will study the equation \eqref{simplificada} in order to obtain an element in the kernel or $\pa_r F[\rho,\lambda_3]$ for some value $\lambda_3\in \RR$ . We shall show the following proposition:
\begin{prop} \label{btres} There exists a solution $(B^3,\lambda_3)\in H^3((1-a),1)\times \RR$ to the equation \eqref{simplificada}. In addition, $\lambda_3$ is simple.
\end{prop}
We remark that this proposition yields the next corollary:
\begin{corollary}The function $\tilde{r}_0(\alpha,\rho)=\rho B^3(\rho)\cos(3\alpha)$ belongs to $H^{4,3}_{3,\text{even}}(\Omega_a)$ and solves \eqref{kerneleq}.
\end{corollary}
\begin{proof}
The proof of Proposition \ref{btres} is divided in two parts. In the first one we deal with the operator $\tilde{T}$ in  \eqref{simplificada}. In the second one we  show existence of pair $(B^3,\lambda_3)\in H^3((1-a,1))\times \RR$ solving \eqref{simplificada} and that $\lambda_3$ is simple.
\begin{enumerate}
\item \textbf{Study of the operator $\tilde{T}^m$.}
This part is devoted to studying the operator $\tilde{T}^m$ and its derivatives until order 3. Here we recall its definition:
\begin{align*}
\tilde{T}^m B(\rho)=\frac{1}{\rho}\int_{-\infty}^{\infty} f_\rho(\rho') B(\rho')K^{m}\left(\frac{\rho}{\rho'}\right) d\rho'
\end{align*}
with
\begin{align*}
K^{m}(s) & = \frac{1}{2\pi s} \int_{-\pi}^{\pi} \frac{\cos(mx)}{\sqrt{1+s^2-2s\cos(x)}}dx .
\end{align*}
The main results here are Corollary \ref{lemacompacidad} and Lemma \ref{Tstarcompacidad} that state that the operator $\tilde{T}^m$ and its adjoint  $\tilde{T}^{m*}$ are compact operators acting from $H^k$ to $H^{k+1}$ for $k=0,1,2$.

Let's  compute the derivatives of $\tilde{T}^{3m}$.

\begin{lemma}\label{Tderivadas}Let $B\in C^3$ then the following equalities hold:
\begin{align*}
\pa_\rho \tilde{T}^{m}B(\rho)=&\tilde{T}^{m}_1\pa_\rho B (\rho)+\frac{1}{\rho}\inti \pa^{2}_\rho f(\rho') B(\rho')\frac{\rho'}{\rho}K^{m}\left(\frac{\rho}{\rho'}\right)d\rho',\\
\pa^{2}_\rho \tilde{T}^{m}B(\rho)= & T^{m}_2\pa^{2}_\rho B(\rho)+\frac{1}{\rho}\sum_{j=1}^2\inti \pa^{j+1}_\rho f(\rho')\pa^{2-j}_\rho B(\rho')\left(\frac{\rho'}{\rho}\right)^2K^{m}\left(\frac{\rho}{\rho'}\right)d\rho'\\
\pa^{3}_\rho \tilde{T}^{m}B(\rho)= & T^{m}_3\pa^{3}_\rho B(\rho)+\sum_{j=1}^3\inti \pa^{j+1}_\rho f(\rho')\pa^{3-j}_\rho B(\rho')\left(\frac{\rho'}{\rho}\right)^3K^{m}\left(\frac{\rho}{\rho'}\right)d\rho'
\end{align*}
where
\begin{align}
\tilde{T}^{m}_1 B=&\frac{1}{\rho}\inti f_\rho(\rho')B(\rho')\frac{\rho'}{\rho}K^{m}\left(\frac{1}{\gamma}\right)d\rho'\\
\tilde{T}^{m}_2B(\rho)=&\frac{1}{\rho}\inti f_\rho(\rho')B(\rho')\left(\frac{\rho'}{\rho}\right)^2K^{m}\left(\frac{\rho}{\rho'}\right)d\rho'\\
\tilde{T}^{m}_3B(\rho)=&\frac{1}{\rho}\inti f_\rho(\rho')B(\rho')\left(\frac{\rho'}{\rho}\right)^3K^{m}\left(\frac{\rho}{\rho'}\right)d\rho'.
\end{align}

\end{lemma}
\begin{proof}
 We notice that after the change of variable $\gamma=\frac{\rho'}{\rho}$ we have that
\begin{align*}
\tilde{T}^{m}B(\rho)=\inti g(\rho\gamma) K^{m}\left(\frac{1}{\gamma}\right)d\gamma
\end{align*}
where $g=f_\rho B$. Taking one derivative we have that
\begin{align*}
\pa_\rho \tilde{T}^{m}B(\rho)= & \inti (\pa_\rho g)(\gamma \rho)\gamma K^{m}\left(\frac{1}{\gamma}\right)d\gamma\\
= & \frac{1}{\rho}\inti \pa_\rho g (\rho')\frac{\rho'}{\rho}K^{m}\left(\frac{1}{\gamma}\right)d\rho'\\
=&\frac{1}{\rho}\inti \pa^{2}_\rho f(\rho') B(\rho')\frac{\rho'}{\rho}K^{m}\left(\frac{1}{\gamma}\right)d\rho+\frac{1}{\rho}\inti  f_\rho(\rho') \pa_\rho B(\rho')\frac{\rho'}{\rho}K^{m}\left(\frac{1}{\gamma}\right)d\rho',
\end{align*}
so that
\begin{align*}
\pa_\rho \tilde{T}^{m}B(\rho)=\tilde{T}^{m}_1\pa_\rho B (\rho)+\frac{1}{\rho}\inti \pa^{2}_\rho f(\rho') B(\rho')\frac{\rho'}{\rho}K^{m}\left(\frac{1}{\gamma}\right)d\rho'.
\end{align*}
Computing in a similar way, by  taking two derivatives, we have that
\begin{align*}
\pa_\rho^{2}\tilde{T}^{m}B(\rho)=&\frac{1}{\rho}\inti \pa_\rho ^{2}g(\rho')\left(\frac{\rho'}{\rho}\right)^2K^{m}\left(\frac{\rho}{\rho'}\right)d\rho'\\
\pa_\rho^{3}\tilde{T}^{m}B(\rho)=&\frac{1}{\rho}\inti \pa_\rho ^{3}g(\rho')\left(\frac{\rho'}{\rho}\right)^3K^{m}\left(\frac{\rho}{\rho'}\right)d\rho'.
\end{align*}
And we can write
\begin{align*}
\pa^{2}_\rho \tilde{T}^{m}B(\rho)= & \tilde{T}^{m}_2\pa^{2}_\rho B(\rho)+\frac{1}{\rho}\sum_{j=1}^2\inti \pa^{j+1}_\rho f(\rho')\pa^{2-j}_\rho B(\rho')\left(\frac{\rho'}{\rho}\right)^2K^{m}\left(\frac{\rho}{\rho'}\right)d\rho'\\
\pa^{3}_\rho \tilde{T}^{m}B(\rho)= & \tilde{T}^{m}_3\pa^{3}_\rho B(\rho)+\sum_{j=1}^3\inti \pa^{j+1}_\rho f(\rho')\pa^{3-j}_\rho B(\rho')\left(\frac{\rho'}{\rho}\right)^3K^{m}\left(\frac{\rho}{\rho'}\right)d\rho'.
\end{align*}
\end{proof}

Some of the properties of the operator $\tilde{T}^m$ come from the sign of its kernel. We study this sign in the following lemma.

\begin{lemma}\label{positivo}
Let $T^{m}\left(\frac{\rho}{\rho'}\right)$ be defined as:

\begin{align*}
T^{m}\left(\frac{\rho}{\rho'}\right) = \frac{1}{2\pi}\int_{-\pi}^{\pi} \frac{\cos(mx)}{\sqrt{1+\left(\frac{\rho}{\rho'}\right)^2-2\left(\frac{\rho}{\rho'}\right)\cos(x)}}dx
\end{align*}

Then, we have that, for every $(\rho,\rho') \in \mathbb{R}^{2}, \rho \neq \rho'$
\begin{enumerate}
\item $T^{m}\left(\frac{\rho}{\rho'}\right) > 0$
\item $T^{m+1}\left(\frac{\rho}{\rho'}\right) < T^{m}\left(\frac{\rho}{\rho'}\right)$.
\end{enumerate}
\end{lemma}

\begin{proof}
Let $r = \frac{\rho}{\rho'}$. We have that

\begin{align*}
T^{m}\left(\frac{\rho}{\rho'}\right) = \frac{1}{2\pi}\int_{-\pi}^{\pi} \frac{\cos(mx)}{(1+r^2-2r\cos(x))^{\frac12}}dx
\end{align*}

We do first the $r < 1$ case. The $r > 1$ case follows from the property

\begin{align*}
T^{m}\left(\frac{\rho}{\rho'}\right) = \left(\frac{\rho}{\rho'}\right)^{3}T^{m}\left(\frac{\rho'}{\rho}\right)
\end{align*}

\begin{align*}
T^{m}\left(\frac{\rho}{\rho'}\right) & = \frac{1}{1+r} \int_{-\pi}^{\pi} \frac{\cos(mx)}{\left(1 - \frac{4r}{(1+r)^2}\cos^{2}\left(\frac{x}{2}\right)\right)^{\frac12}}
= \frac{1}{2\pi}\frac{2}{1+r} \int_{-\frac{\pi}{2}}^{\frac{\pi}{2}} \frac{\cos(2mx)}{\left(1 - \frac{4r}{(1+r)^2}\cos^{2}\left(x\right)\right)^{\frac12}} \\
& = \frac{1}{2\pi}\frac{4}{1+r} \sum_{k=0}^{\infty}\int_{0}^{\frac{\pi}{2}} \cos(2mx)\left(\frac{4r}{(1+r)^2}\right)^{k} \cos^{2k}\left(x\right)\frac{1}{k!}\left(1/2\right)_{k} dx \\
& = \frac{1}{2\pi}\frac{4}{1+r} \sum_{k=0}^{\infty}\left(\frac{4r}{(1+r)^2}\right)^{k} \frac{1}{k!}\left(1/2\right)_{k} \int_{0}^{\frac{\pi}{2}} \cos(2mx) \cos^{2k}\left(x\right) dx \\
& = \frac{1}{2\pi} \frac{4}{1+r} \sum_{k=m}^{\infty}\left(\frac{4r}{(1+r)^2}\right)^{k} \frac{1}{k!}\left(1/2\right)_{k} \frac{\pi}{2^{2k+1}} \frac{\Gamma(2k+1)}{\Gamma(1+k+m)\Gamma(1+k-m)} \\
& = \sum_{k=m}^{\infty}A_{k} \frac{1}{\Gamma(1+k+m)\Gamma(1+k-m)}. \\
\end{align*}

Since $A_{k} > 0$ for every $k$, this shows the first item. Next, we compute for $r < 1$:

\begin{align*}
T^{m}\left(\frac{\rho}{\rho'}\right) - T^{m+1}\left(\frac{\rho}{\rho'}\right) & = \frac{A_{m}}{\Gamma(1+2m)}\\ & + \sum_{k=m+1}^{\infty}A_{k} \left(\frac{1}{\Gamma(1+k+m)\Gamma(1+k-m)} - \frac{1}{\Gamma(2+k+m)\Gamma(k-m)}\right) \\
& = \frac{A_{m}}{\Gamma(1+2m)} + \sum_{k=m+1}^{\infty}\frac{A_k}{\Gamma(1+k+m)\Gamma(1+k-m)} \left( \frac{1+2m}{1+k+m}\right) > 0.\\
\end{align*}

This completes the proof of the lemma.
\end{proof}

In order to prove the compactness of the operator $\tilde{T}^m$ we will use the following decomposition:

\begin{lemma}\label{log+c1}The function $T^m(s)$ satisfies
$$T^{m}(s)=-\frac{2}{\sqrt{s}}\log(|1-s|)+E^m(s)$$
where $E^m(s)$ is a $C^1-$function
with $$||E^m||_{C^1}\leq C(m).$$
\end{lemma}
\begin{proof}
We will split $T^m(s)$ in two parts
\begin{align*}
T^{m}(s)=&\int_{-\pi}^\pi \frac{1}{\sqrt{(1-s)^2+4s\sin^2\left(\frac{x}{2}\right)}}dx +\int_{-\pi}^\pi \frac{\cos(mx)-1}{(1-s)^2+4s\sin^2\left(\frac{x}{2}\right)}dx\\
\\ \equiv & T_1(s)+T^{m}_2(s).
\end{align*}
We now focus on the term $T_1(s)$. By making the change $y=\sin\left(\frac{x}{2}\right)$ yields
\begin{align*}
T_1(s)=&4\int_{0}^1 \frac{1}{\sqrt{(1-s)^2+4s\sin^2\left(\frac{x}{2}\right)}}\frac{dy}{\sqrt{1-y^2}}\\
&=\frac{2}{\sqrt{s}}\int_{0}^\frac{1}{\ep}\frac{1}{\sqrt{1+z^2}}\frac{dz}{\sqrt{1-\ep^2z^2}},
\end{align*}
with $\ep=\frac{|1-s|}{2\sqrt{s}}.$ We will break  $T_1(s)$ in two parts
\begin{align*}
T_1(s)=&\frac{2}{\sqrt{s}}\int_{0}^\frac{1}{\ep}\frac{1}{\sqrt{1+z^2}}\left(\frac{1}{\sqrt{1-\ep^2z^2}}-1\right)dz\\
+& \frac{2}{\sqrt{s}}\int_{0}^\frac{1}{\ep}\frac{1}{\sqrt{1+z^2}}dz\\
\equiv & T_{11}(s)+T_{12}(s).
\end{align*}
The integral in the term $T_{12}(s)$ can be computed exactly. We obtain that
\begin{align}\label{T12}
T_{12}(s)=\frac{2}{\sqrt{s}}\arcsinh\left(\frac{1}{\ep}\right),
\end{align}
where we recall that
\begin{align*}
\arcsinh(x)=\log\left(x+\sqrt{x^2+1}\right)
\end{align*}
and then,
\begin{align*}
\arcsinh\left(\frac{1}{\ep}\right)=\log\left(\frac{2\sqrt{s}}{|1-s|}+\sqrt{\frac{4s}{|1-s|^2}+1}\right)=-\log\left(|1-s|\right)+2\log(1+\sqrt{s}).
\end{align*}
Finally,
\begin{align*}
T_{12}(s)=&-\frac{2}{\sqrt{s}}\log(|1-s|)+\frac{4}{\sqrt{s}}\log(1+\sqrt{s})=-\frac{2}{\sqrt{s}}\log(|1-s|)+S(s).
\end{align*}
where $S(s)$ is an smooth function.  Next we will show that the first derivative of the function $T_{11}(s)$ is continuous.  We notice that undoing the change of variable we can write
\begin{align*}
T_{11}(s)&=4\int_0^{1}\frac{1}{\sqrt{(1-s)^2+4sy^2}}\left(\frac{1}{\sqrt{1-y^2}}-1\right)dy\\&=4\int_0^{1}\frac{1}{\sqrt{(1-s)^2+4sy^2}}\left(\frac{y^2}{\sqrt{1-y^2}
\left(1+\sqrt{1-y^2}\right)}\right)dy
\end{align*}
Since the function $$\frac{y^2}{\sqrt{(s-1)^2+4sy^2}}$$
is in $L^\infty$, by the dominated convergence theorem (DCT),   $T_{11}(s)$ is continuous at $s=1$. In addition, for $s\neq 1$, we can differentiate to get that in a weak sense
\begin{align*}
\pa_s T_{11}(s)=&-4\int_{0}^1\frac{(s-1)+2y^2}{\left((1-s)^2+4sy^2\right)^\frac{3}{2}}\left(\frac{y^2}{\sqrt{1-y^2}
\left(1+\sqrt{1-y^2}\right)}\right)dy\\
&=T_{111}(s)+T_{112}(s)
\end{align*}
where
\begin{align*}
T_{111}(s)=-4\int_{0}^1 \frac{(s-1)}{\left((1-s)^2+4sy^2\right)^\frac{3}{2}}\left(\frac{y^2}{\sqrt{1-y^2}
\left(1+\sqrt{1-y^2}\right)}\right)dy
\end{align*}
and
\begin{align*}
T_{112}(s)=-8\int_{0}^1\frac{y^4}{\left((1-s)^2+4sy^2\right)^\frac{3}{2}}\left(\frac{1}{\sqrt{1-y^2}
\left(1+\sqrt{1-y^2}\right)}\right)dy
\end{align*}
where we can prove that $T_{112}(s)$ is a continuous function at $s=1$ by DCT. To analyze $T_{111}(s)$ we split this term into two parts
\begin{align*}
T_{111}(s)=&-2\int_0^1 \frac{(s-1)y^2}{\left((1-s)^2+4sy^2\right)^\frac{3}{2}}dy\\& -4\int_{0}^1 \frac{(s-1)y^2}{\left((1-s)^2+4sy^2\right)^\frac{3}{2}}\left(\frac{1}{\sqrt{1-y^2}
\left(1+\sqrt{1-y^2}\right)}-\frac{1}{2}\right)dy.
\end{align*}
The second integral is continuous at $s=1$ again by DCT. The first integral can be computed analytically. We obtain that
\begin{align*}
-2\int_0^1 \frac{(s-1)y^2}{\left((1-s)^2+4sy^2\right)^\frac{3}{2}}dy=\frac{1-s}{3s}\left(-\frac{2\sqrt{s}}{1+s}+\arcsinh\left(\frac{2\sqrt{s}}{|1-s|}\right)\right),
\end{align*}
to show that $T_{111}(s)$ is also a continuous function at $s=1$. Therefore we have that $$T_1(s)=-\frac{2}{\sqrt{s}}\log(|1-s|)+E(s)$$ where $E(s)$ is a $C^1$ function.

For $T^m_2(s)$ we use Taylor's integral remainder formula to show that $$\cos(mx)-1=-\frac{m^2x^2}{2}+m^4x^4\int_{0}^1\gamma^3\int_{0}^1\mu^2\int_{0}^1\nu\int_{0}^1  \cos(m\gamma\mu\nu\tau x)d\nu d\mu d\gamma d\tau \equiv \frac{m^2x^2}{2}
+ m^4x^4 R^m(x),$$
and that
\begin{align*}
2\sin^2\left(\frac{x}{2}\right)=(1-\cos(x))=\frac{x^2}{2}-x^4R^1(x)
\end{align*}
Therefore
\begin{align*}
\cos(mx)-1=2m^2\sin^2\left(\frac{x}{2}\right)+x^4\tilde{R}^m(x).
\end{align*}
where $\tilde{R}^m(x)$ is a bounded function. We now can write $T^m_2(s)$ as follows
\begin{align*}
T^m_2(s)=&\int_{-\pi}^\pi \frac{2m^2\sin^2\left(\frac{x}{2}\right)}{\sqrt{(1-s)^2+4s\sin^2\left(\frac{x}{2}\right)}}dx+\int_{-\pi}^\pi \frac{x^4\tilde{R}^m(x)}{\sqrt{(1-s)^2+4s\sin^2\left(\frac{x}{2}\right)}}dx
\end{align*}

By DCT $T^m_2(s)$ is continuous at $s=1$. For $s\neq 1$ we differentiate to get
\begin{align*}
\pa_s T^m_2(s)=&-\intpi \frac{(s-1)+2\sin^2\left(\frac{x}{2}\right)}{\left((1-s)^2+4s\sin^2\left(\frac{x}{2}\right)\right)^\frac{3}{2}}\sin^2\left(\frac{x}{2}\right)dx\\&
-\intpi \frac{(s-1)+2\sin^2\left(\frac{x}{2}\right)}{\left((1-s)^2+4s\sin^2\left(\frac{x}{2}\right)\right)^\frac{3}{2}}x^4\tilde{R}^m(x)dx.
\end{align*}
The second integral on the right hand side is a continuous function. In the first one we make the change of variables $y=\sin\left(\frac{x}{2}\right)$ to get that
\begin{align*}
\intpi \frac{(s-1)+2\sin^2\left(\frac{x}{2}\right)}{\left((1-s)^2+4s\sin^2\left(\frac{x}{2}\right)\right)^\frac{3}{2}}\sin^2\left(\frac{x}{2}\right)dx=4\int_0^1 \frac{(s-1)+2y^2}{\left((1-s)^2+4sy^2\right)^\frac{3}{2}}\frac{y^2}{\sqrt{1-y^2}}dy.
\end{align*}
Here, the difficulty to show continuity comes from the integral
\begin{align*}
\int_0^1 \frac{(s-1)}{\left((1-s)^2+4sy^2\right)^\frac{3}{2}}\frac{y^2}{\sqrt{1-y^2}}dy
\end{align*}
but it has been already proven that this integral is continuous in the analysis of $T_{111}(s)$. This concludes the proof of the lemma.
\end{proof}
We also need to study the derivatives of the function $\tilde{T}^{3*}B(\rho)$. We start with the following lemma:

\begin{lemma}\label{c2}The function $(1-s)^3T^3(s)$ is $C^2$ in a neighbourhood of  $s=1$.
\end{lemma}

\begin{proof}
In order to prove the lemma we will split $T^3(s)$ in different parts and we will deal with every one separately. We first notice that we can write
\begin{align*}
T^{3}(s)&=\int_{-\pi}^{\pi} \frac{\cos(3x)}{\sqrt{(1-s)^2+4s\sin\left(\frac{x}{2}\right)^2}}dx
\end{align*}
We will use the formula
\begin{align*}
\cos(3x)=1-18\sin^2\left(\frac{x}{2}\right)+48\sin^4\left(\frac{x}{2}\right)-32\sin^6\left(\frac{x}{2}\right)
\end{align*}
and make the change $y=\sin\left(\frac{x}{2}\right)$ to get
\begin{align*}
T^{3}(s)=&2\int_{-1}^{1}\frac{1-18y^2+48y^4-32y^6}{\sqrt{(1-s)^2+4sy^2}}\frac{dy}{\sqrt{1-y^2}}\\
=& 4\int_{0}^{1}\frac{1-18y^2+48y^4-32y^6}{\sqrt{(1-s)^2+4sy^2}}\frac{dy}{\sqrt{1-y^2}}
\end{align*}
In addition we change again of variable by making $z=\frac{2\sqrt{s}}{|1-s|}y$ so that
\begin{align*}
T^{3}(s)=&\frac{2}{\sqrt{s}}\int_{0}^\frac{1}{\ep}\frac{1-18\ep^2z^2+48\ep^4z^4-32\ep^6z^6}{\sqrt{1+z^2}}\frac{dz}{\sqrt{1-\ep^2z^2}},
\end{align*}
where $\ep=\frac{|1-s|}{2\sqrt{s}}$. Now we define
\begin{align*}
T_1(s)=&\frac{2}{\sqrt{s}}\int_{0}^\frac{1}{\ep}\frac{1}{\sqrt{1+z^2}}\frac{dz}{\sqrt{1-\ep^2z^2}}\\
T_2^{3}(s)=&\frac{2}{\sqrt{s}}\int_{0}^\frac{1}{\ep}\frac{-18\ep^2z^2+48\ep^4z^4-32\ep^6z^6}{\sqrt{1+z^2}}\frac{dz}{\sqrt{1-\ep^2z^2}}.
\end{align*}
Thus $T^{3}(s)=T_1(s)+T^3_2(s)$ and $T_1(s)$ is the same function that in the proof of Lemma \ref{log+c1}. Then we know that $T_1(s)=T_{11}(s)+T_{12}(s)$, where
$$T_{12}(s)=-\frac{2}{\sqrt{s}}\log(|1-s|)+S(s),$$ with $S(s)$ a smooth function. Therefore $(1-s)^3T_{12}(s)$ is a $C^2-$function.
Also we know that $T_{11}(s)$ is a $C^1-$ function. In order to analyze two derivatives of $T_{11}(s)$ we differentiate for $s\neq 1$ to get
\begin{align*}
\pa^2_s T_{11}(s)=&-4\int_{0}^1\frac{1}{\left((1-s)+4sy^2\right)^\frac{3}{2}}\left(\frac{y^2}{\sqrt{1-y^2}
\left(1+\sqrt{1-y^2}\right)}\right)dy
\\&+12\int_{0}^1\frac{\left((s-1)^2+4y^2\right)^2}{\left((1-s)^2+4sy^2\right)^\frac{5}{2}}\left(\frac{y^2}{\sqrt{1-y^2}
\left(1+\sqrt{1-y^2}\right)}\right)dy.
\end{align*}
Multiplying by $(1-s)$ we have that
\begin{align*}
(1-s)\pa^{2}_{s} T^3_{11}(s)=-T^3_{111}(s)+6\int_{0}^1(1-s)\frac{\left((s-1)+4y^2\right)^2}{\left((1-s)^2+4sy^2\right)^\frac{5}{2}}\left(\frac{y^2}{\sqrt{1-y^2}
\left(1+\sqrt{1-y^2}\right)}\right)dy,
\end{align*}
where we have already checked that $T^3_{111}(s)$ is continuous. Next we check that this function is also continuous. This will be a consequence of the continuity of the following terms
\begin{align*}
\int_{0}^1(1-s)\frac{(s-1)^2}{\left((1-s)^2+4sy^2\right)^\frac{5}{2}}\left(\frac{y^2}{\sqrt{1-y^2}
\left(1+\sqrt{1-y^2}\right)}\right)dy\\
\int_{0}^1(1-s)\frac{(s-1)y^2}{\left((1-s)^2+4sy^2\right)^\frac{5}{2}}\left(\frac{y^2}{\sqrt{1-y^2}
\left(1+\sqrt{1-y^2}\right)}\right)dy\\
\int_{0}^1(1-s)\frac{y^4}{\left((1-s)^2+4sy^2\right)^\frac{5}{2}}\left(\frac{y^2}{\sqrt{1-y^2}
\left(1+\sqrt{1-y^2}\right)}\right)dy.
\end{align*}
The last term is continuous just by applying DCT. The other two terms can be treated in a similar way we did before to show continuity.

In addition the analysis of $T^3_{2}(s)$ does not introduce any new difficulty and we will not give the details here. This concludes the proof of the lemma.
\end{proof}

\begin{lemma}\label{commutator}The function $(1-s)T^m(s)$ satisfies
\begin{align*}
\pa_s \left((1-s)T^m(s)\right)=-T^m(s)+E^{m}(s)
\end{align*}
where $E^{m}(s)$ is a continuous function with $L^\infty-$norm independent on $m$.
\end{lemma}
\begin{proof}
By Lemma \ref{log+c1} $(1-s)T^m(s)$ is a continuous function. For $s\neq 1$ we can differentiate to get
\begin{align*}
\pa_s \left((1-s)T^m(s)\right)=-T^m(s)+E^{m}(s)
\end{align*}
with
\begin{align*}
E^m(s)=\int_{-\pi}^\pi\frac{-(1-s)^2+2(1-s)\sin^2\left(\frac{x}{2}\right)}{\left((1-s)^2+2\sin^2\left(\frac{x}{2}\right)\right)^\frac{3}{2}}\cos(mx)dx.
\end{align*}
In addition we have that
\begin{align*}
\left|E^m(s)\right|\leq \int_{-\pi}^{\pi} \frac{(1-s)^2+2|1-s|\sin^2\left(\frac{x}{2}\right)}{\left((1-s)^2+s\sin^2\left(\frac{x}{2}\right)\right)^\frac{3}{2}}dx.
\end{align*}
For the first integral we have that
\begin{align}\label{a1}
\int_{-\pi}^{\pi} &\frac{(1-s)^2}{\left((1-s)^2+4s\sin^2\left(\frac{x}{2}\right)\right)^\frac{3}{2}}dx=\frac{2}{\sqrt{s}}\int_{0}^\frac{2\sqrt{s}}{|1-s|}\frac{1}{\left(1+y^2\right)^\frac{3}{2}}
\frac{dy}{\sqrt{1-\frac{|1-s|^2}{2s}y^2}}\nonumber\\
&=\frac{2}{\sqrt{s}}\int_{0}^\frac{2\sqrt{s}}{|1-s|}\frac{1}{\left(1+y^2\right)^\frac{3}{2}}
dy +\frac{2}{\sqrt{s}}\int_{0}^\frac{2\sqrt{s}}{|1-s|}\frac{1}{\left(1+y^2\right)^\frac{3}{2}}
\left(\frac{1}{\sqrt{1-\frac{|1-s|^2}{2s}y^2}}-1\right)dy\nonumber\\
&=\frac{2}{\sqrt{s}}\int_{0}^\frac{2\sqrt{s}}{|1-s|}\frac{1}{\left(1+y^2\right)^\frac{3}{2}}
dy +4\int_{0}^1\frac{|1-s|}{\left((1-s)^2+4sy^2\right)^\frac{3}{2}}
\left(\frac{1}{\sqrt{1-y^2}}-1\right)dy.
\end{align}
In addition, the second integral, can be written
\begin{align}\label{a2}
&\int_{-\pi}^{\pi} \frac{2|1-s|\sin^2\left(\frac{x}{2}\right)}{\left((1-s)^2+2\sin^2\left(\frac{x}{2}\right)\right)^\frac{3}{2}}dx=
8\int_{0}^1\frac{|1-s|y^2}{\left(|1-s|^2+4sy^2\right)^\frac{3}{2}}\frac{dy}{\sqrt{1-y^2}}\nonumber\\
&=\frac{|1-s|}{s^\frac{3}{2}}\int_{0}^\frac{2\sqrt{s}}{|1-s|}\frac{y^2}{(1+y^2)^\frac{3}{2}}\frac{dy}{\sqrt{1-\frac{|1-s|^2}{4s}y^2}} =\frac{|1-s|}{s^\frac{3}{2}}\int_{0}^\frac{2\sqrt{s}}{|1-s|}\frac{y^2}{(1+y^2)^\frac{3}{2}}dy\nonumber\\
&+\frac{|1-s|}{s^\frac{3}{2}}\int_{0}^\frac{2\sqrt{s}}{|1-s|}\frac{y^2}{(1+y^2)^\frac{3}{2}}\left(\frac{1}{\sqrt{1-\frac{|1-s|^2}{4s}y^2}}-1\right)dy\nonumber\\
&\frac{|1-s|}{s^\frac{3}{2}}\int_{0}^\frac{2\sqrt{s}}{|1-s|}\frac{y^2}{(1+y^2)^\frac{3}{2}}dy+8\int_{0}^1\frac{|1-s|y^2}{\left(|1-s|^2+4sy^2\right)^\frac{3}{2}}\left(\frac{1}{\sqrt{1-y^2}}-1\right)dy.
\end{align}
From expression \eqref{a1}, \eqref{a2} and DCT it is easy to achieve the conclusion of the lemma.
\end{proof}

Now we can state and prove the main lemmas of this section.

\begin{lemma}\label{lemacompacidad}
 Let $f \in C^{4}(\mathbb{R}^{+})$ and $m \geq 1$. Then, $\tilde{T}^{m}$ and $\tilde{T}^m_i$ , with $i=1,2,3$ are a compact operators, acting between $L^{2}$ and $H^{1}$, with
\begin{align*}
&||\tilde{T}_i^m v||_{L^2}\leq C ||v||_{L^2},\\
&\|\tilde{T}_i^{m} v\|_{H^{1}} \leq C(m)\|v\|_{L^{2}},
\end{align*}
where the constant $C$ in the first inequality  only  depends on $\|f\|_{C^{4}}$ and $\Omega_a$ and the constant $C(m)$ in the second one also depends on $m$.
\end{lemma}
\begin{proof}
Because of Lemma \ref{log+c1} $\tilde{T}^3_i$ is bounded from $L^2$ to $L^2$. Then, by Lemma \ref{positivo} and the monotonicity of $f(\rho)$, every $\tilde{T}^m_i$ is also bounded from $L^2$ to $L^2$ by the same constant as $\tilde{T}^3_i$. In order to show the boundedness from $L^2$ to $H^1$ we first show the estimate for smooth functions. This is done just using Lemma \ref{log+c1} and the $L^2-$boundedness of the Hilbert transform. Finally we proceed by a density argument.
\end{proof}
\begin{corollary}\label{H2H3} The operator $\tilde{T}^{m}$ is bounded from $H^{k}$ to $H^{k+1}$, for $k=0,1,2$, with norm depending only on $m$, $||f||_{C^4}$ and $a$.
\end{corollary}
\begin{proof} We first prove the bound for smooth functions. This can be done by using Lemma \ref{Tderivadas} and \ref{lemacompacidad}. Then we proceed by a density argument.
\end{proof}
Finally, we will study the adjoint operator of $\tilde{T}^3$ given by the expression
\begin{align*}
\tilde{T}^{3*}B(\rho)=f_\rho(\rho)\int_{1-a}^1B(\rho')K^3\left(\frac{\rho'}{\rho}\right)\frac{d\rho'}{\rho'}.
\end{align*}
\begin{lemma}\label{Tstarderivadas}Let $B\in C^3$ then
the following equalities hold:
\begin{enumerate}
\item
\begin{align*}
&\pa_\rho \tilde{T}^{3*}B(\rho)\\&=\pa^{2}_\rho f(\rho)\int_{1-a}^1K^3\left(\frac{\rho'}{\rho}\right)\frac{d\rho'}{\rho'}-\frac{1}{\rho'}V.P.\int_{1-a}^1B(\rho'),
\pa_{\rho'}\left(K^3\left(\frac{\rho'}{\rho}\right)\right)d\rho'\end{align*}
\item
\begin{align*}
&\pa^2_\rho \tilde{T}^{3*}B(\rho)\\&=\pa^3_\rho f(\rho)\int_{1-a}^1B(\rho')K^3\left(\frac{\rho'}{\rho}\right)\frac{d\rho'}{\rho'}\\
&-\frac{\pa^2_\rho f(\rho)}{\rho}P.V.\int_{1-a}^1B(\rho')\pa_{\rho'} K^3\left(\frac{\rho'}{\rho}\right)d\rho'-\pa_\rho\left(B(1)\frac{f_\rho(\rho)K^3\left(\frac{1}{\rho}\right)}{\rho}-B(1-a)\frac{f_\rho(\rho)K^3\left(\frac{1-a}{\rho}\right)}{\rho}\right)\\
&-\frac{\pa^2_\rho f(\rho)}{\rho}\int_{1-a}^1 B'(\rho') K^3\left(\frac{\rho'}{\rho}\right)+\frac{f_\rho(\rho)}{\rho}P.V.\int_{1-a}^1 B'(\rho')\pa_{\rho'}\left(\rho'K^3\left(\frac{\rho'}{\rho}\right)\right)d\rho',\end{align*}
\item
\begin{align*}
&\pa^3_\rho \tilde{T}^{3*}(s)\\&=\pa^4_\rho f(\rho)\int_{1-a}^1B(\rho')K^{3}\left(\frac{\rho'}{\rho}\right)\frac{d\rho'}{\rho'}+\frac{\pa^3_\rho f(\rho)}{\rho}\left(-B(1)K^{3}\left(\frac{1}{\rho}\right)+B(1-a)K^{3}\left(\frac{1-a}{\rho}\right)\right)\\&+\pa^3_\rho f(\rho)\int_{1-a}^1B'(\rho')K^{3}\left(\frac{\rho'}{\rho}\right)\frac{d\rho'}{\rho}\\
&+\pa_\rho\left(\frac{\pa^2_\rho f(\rho)}{\rho}\left(-B(1)K^3\left(\frac{1}{\rho}\right)+B(1-a)K^3\left(\frac{1-a}{\rho}\right)\right)\right)\\
&+2\pa^3_\rho f(\rho)\int_{1-a}^1B''(\rho')K^3\left(\frac{\rho'}{\rho}\right)\frac{d\rho'}{\rho}++2\frac{\pa^2_\rho f(\rho)}{\rho^2}\left(-B'(1)K^3\left(\frac{1}{\rho}\right)+(1-a)K^3\left(\frac{1-a}{\rho}\right)\right)\\
&+2\pa^2_\rho f(\rho)\int_{1-a}^1B''(\rho')K^3\left(\frac{\rho'}{\rho}\right)\frac{\rho'}{\rho}\frac{d\rho'}{\rho}
-\pa^2_\rho\left(B(1)\frac{f_\rho(\rho)K^3\left(\frac{1}{\rho}\right)}{\rho}-B(1-a)\frac{f_\rho(\rho)K^3\left(\frac{1-a}{\rho}\right)}{\rho}\right)\\
&\pa_\rho^2f(\rho)\int_{1-a}^1 B''(\rho')K^3\left(\frac{\rho'}{\rho}\right)\frac{\rho'}{\rho}\frac{d\rho'}{\rho}-f_\rho(\rho)P.V.\int_{1-a}^1 \pa_{\rho'}\left(\rho'^2K^3\left(\frac{\rho'}{\rho}\right)\right)\frac{d\rho'}{\rho}.
\end{align*}
\end{enumerate}
\end{lemma}
\begin{proof}
First we notice that, after a change of variables,
\begin{align*}
\tilde{T}^{3*}B(\rho)=f_\rho(\rho)\int_\frac{{1-a}}{\rho}^\frac{1}{\rho} B(s\rho)K^3\left(s\right)\frac{ds}{s}.
\end{align*}
Then, taking a derivative yields,
\begin{align}\label{luego1}
&\pa_\rho \tilde{T}^{3*}B(\rho)=\pa^2_\rho f(\rho)\int_\frac{{1-a}}{\rho}^\frac{1}{\rho}B(\rho s)K^3(s)\frac{ds}{s}+\frac{1}{\rho}f_\rho(\rho)
\left(B(1)K^3\left(\frac{1}{\rho}\right)-B(1-a)K^3\left(\frac{1-a}{\rho}\right)\right)\nonumber\\
&+f_\rho(\rho)\int_\frac{{1-a}}{\rho}^\frac{1}{\rho}K^3(s)B'(\rho s)ds
\end{align}
and changing variable we have that
\begin{align*}
&\pa_\rho \tilde{T}^{3*}B(\rho)=\pa^2_\rho f(\rho)\int_{1-a}^1 B(\rho')K^3\left(\frac{\rho'}{\rho}\right)\frac{d\rho'}{\rho'}+\frac{1}{\rho}f_\rho(\rho)
\left(B(1)K^3\left(\frac{1}{\rho}\right)-B(1-a)K^3\left(\frac{1-a}{\rho}\right)\right)\\
&-f_\rho(\rho)\int_{1-a}^1K^3\left(\frac{\rho'}{\rho}\right)B'(\rho')\frac{d\rho'}{\rho}.
\end{align*}
And integration by parts in the last integral yields
\begin{align*}
&\pa_\rho \tilde{T}^{3*}B(\rho)=\pa^2_\rho f(\rho)\int_{1-a}^1 B(\rho') K^3\left(\frac{\rho'}{\rho}\right)\frac{d\rho'}{\rho'}+f_\rho(\rho)V.P.\int_{1-a}^1\pa_{\rho'}K^3\left(\frac{\rho'}{\rho}\right)B(\rho')\frac{d\rho'}{\rho}
\end{align*}
Taking a derivative on the equation \eqref{luego1} we obtain
\begin{align}\label{luego2}
&\pa^2_\rho\tilde{T}^{3*}B(\rho)=\pa^3_\rho f(\rho)\int_\frac{{1-a}}{\rho}^\frac{1}{\rho}B(\rho s)K^3(s)\frac{ds}{s}+\pa^2_\rho f(\rho)\frac{1}{\rho}\left(-B(1)K^3\left(\frac{1}{\rho}\right)+B(1-a)K^3\left(\frac{1-a}{\rho}\right)\right)\nonumber\\
&+\pa^2_\rho f(\rho)\int_\frac{{1-a}}{\rho}^\frac{1}{\rho}B'(\rho s)K^3(s) ds-\pa_\rho\left(B(1)\frac{f_\rho(\rho)K^3\left(\frac{1}{\rho}\right)}{\rho}-B(1-a)\frac{f_\rho(\rho)K^3\left(\frac{1-a}{\rho}\right)}{\rho}\right)\nonumber\\
& +\pa^2_\rho f(\rho)\int_{\frac{1-a}{\rho}}^\frac{1}{\rho}B'(\rho s)K^3(s)ds+\frac{f_\rho(\rho)}{\rho^2}\left(B'(1)K^3\left(\frac{1}{\rho}\right)-B'(1-a)K^3\left(\frac{1-a}{\rho}\right)\right)\nonumber\\
&+f_\rho(\rho)\int_\frac{{1-a}}{\rho}^\frac{1}{\rho}K^3(s)B''(\rho s)s ds
\end{align}
A change of variable and integration by parts then yield
\begin{align*}
&\pa^2_\rho \tilde{T}^{3*}B(\rho)=\pa^2_\rho f(\rho)\int_{1-a}^1B(\rho')K^3\left(\frac{\rho'}{\rho}\right)\frac{d\rho'}{\rho'}\\
&-\frac{\pa^2_\rho f(\rho)}{\rho}P.V.\int_{1-a}^1B(\rho')\pa_{\rho'} K^3\left(\frac{\rho'}{\rho}\right)d\rho'-\pa_\rho\left(B(1)\frac{f_\rho(\rho)K^3\left(\frac{1}{\rho}\right)}{\rho}-B(1-a)\frac{f_\rho(\rho)K^3\left(\frac{1-a}{\rho}\right)}{\rho}\right)\\
&+\frac{\pa^2_\rho f(\rho)}{\rho}\int_{1-a}^1 B'(\rho') K^3\left(\frac{\rho'}{\rho}\right)+\frac{f_\rho(\rho)}{\rho}P.V.\int_{1-a}^1 B'(\rho')\pa_{\rho'}\left(\rho'K^3\left(\frac{\rho'}{\rho}\right)\right)d\rho'.
\end{align*}
Finally we take a derivative of the equation \ref{luego2}. We have that
\begin{align*}
&\pa^{3}_\rho \tilde{T}^{3*}=\pa^4_\rho f(\rho)\int_{\frac{1-a}{\rho}}^\frac{1}{\rho} B(\rho s)K^3(s)\frac{ds}{s} +\frac{\pa^3_\rho f(\rho)}{\rho}\left(-B(1)K^{3}\left(\frac{1}{\rho}\right)+B(1-a)K^{3}\left(\frac{1-a}{\rho}\right)\right)\\&+\pa^3_\rho f(\rho)\int_\frac{1-a}{\rho}^\frac{1}{\rho}B'(\rho s) K^3(s)ds\\
&+\pa_\rho\left(\frac{\pa^2_\rho f(\rho)}{\rho}\left(-B(1)K^3\left(\frac{1}{\rho}\right)+B(1-a)K^3\left(\frac{1-a}{\rho}\right)\right)\right)\\
&+2\pa^3_\rho f(\rho)\int_\frac{1-a}{\rho}^\frac{1}{\rho}B'(\rho s)K^3(s)ds +2\frac{\pa^2_\rho f(\rho)}{\rho^2}\left(-B'(1)K^3\left(\frac{1}{\rho}\right)+(1-a)K^3\left(\frac{1-a}{\rho}\right)\right)\\
&+2\pa^2_\rho f(\rho)\int_\frac{1-a}{\rho}^\frac{1}{\rho}B''(\rho s)K^3(s)s ds -\pa^2_\rho\left(B(1)\frac{f_\rho(\rho)K^3\left(\frac{1}{\rho}\right)}{\rho}-B(1-a)\frac{f_\rho(\rho)K^3\left(\frac{1-a}{\rho}\right)}{\rho}\right)\\
&+\pa^2_\rho f(\rho)\int_\frac{1-a}{\rho}^\frac{1}{\rho}K^3(s)B''(\rho s)s ds-f_\rho(\rho)\left(-\frac{1}{\rho^3}B''(1)K^3\left(\frac{1}{\rho}\right)+\frac{(1-a)^2}{\rho^3}B''(1-a)K^3\left(\frac{1-a}{\rho}\right)\right)\\
&f_\rho(\rho)\int_\frac{1-a}{\rho}^\frac{1}{\rho}K^3(s)B'''(\rho s)s^2 ds.
\end{align*}
And  a change of variable and an integration by parts yield
\begin{align*}
&\pa^3_\rho \tilde{T}^{3*}(s)=\pa^4_\rho f(\rho)\int_{1-a}^1B(\rho')K^{3}\left(\frac{\rho'}{\rho}\right)\frac{d\rho'}{\rho'}+\frac{\pa^3_\rho f(\rho)}{\rho}\left(-B(1)K^{3}\left(\frac{1}{\rho}\right)+B(1-a)K^{3}\left(\frac{1-a}{\rho}\right)\right)\\&+\pa^3_\rho f(\rho)\int_{1-a}^1B'(\rho')K^{3}\left(\frac{\rho'}{\rho}\right)\frac{d\rho'}{\rho}\\
&+\pa_\rho\left(\frac{\pa^2_\rho f(\rho)}{\rho}\left(-B(1)K^3\left(\frac{1}{\rho}\right)+B(1-a)K^3\left(\frac{1-a}{\rho}\right)\right)\right)\\
&+2\pa^3_\rho f(\rho)\int_{1-a}^1B''(\rho')K^3\left(\frac{\rho'}{\rho}\right)\frac{d\rho'}{\rho}++2\frac{\pa^2_\rho f(\rho)}{\rho^2}\left(-B'(1)K^3\left(\frac{1}{\rho}\right)+(1-a)K^3\left(\frac{1-a}{\rho}\right)\right)\\
&+2\pa^2_\rho f(\rho)\int_{1-a}^1B''(\rho')K^3\left(\frac{\rho'}{\rho}\right)\frac{\rho'}{\rho}\frac{d\rho'}{\rho}
-\pa^2_\rho\left(B(1)\frac{f_\rho(\rho)K^3\left(\frac{1}{\rho}\right)}{\rho}-B(1-a)\frac{f_\rho(\rho)K^3\left(\frac{1-a}{\rho}\right)}{\rho}\right)\\
&\pa_\rho^2f(\rho)\int_{1-a}^1 B''(\rho')K^3\left(\frac{\rho'}{\rho}\right)\frac{\rho'}{\rho}\frac{d\rho'}{\rho}-f_\rho(\rho)P.V.\int_{1-a}^1 \pa_{\rho'}\left(\rho'^2K^3\left(\frac{\rho'}{\rho}\right)\right)\frac{d\rho'}{\rho}.
\end{align*}
\end{proof}

\begin{lemma}\label{Tstarcompacidad} The operator $\tilde{T}^{3*}$ is bounded from $H^{k}$ to $H^{k+1}$, for $k=0,1,2$, with norm depending only on $||f||_{C^4}$ and $a$.
\end{lemma}
\begin{proof}
In order to prove this lemma we prove the estimates for smooth functions. This can be done by using Lemma \ref{Tstarderivadas}. Indeed, in order to control the evaluations of $B$ and its derivatives we use  the standard Sobolev embedding $H^k\subset C^{l}$, for $k>l+ \frac{1}{2}$ in one dimension ($H^3\subset C^2$). In addition we use Lemmas  \ref{log+c1}, \ref{c2}, together with the $L^2$-boundedness of the  Hilbert transform to control the terms involving the derivative of the kernel $K^3$. Finally we proceed by a density argument.
\end{proof}
Finally, we finish this section by studying the regularity of $\tilde{I}(\rho)$.
\begin{lemma}\label{regularidadI}
Let $f \in C^{4}(\mathbb{R}^{+})$. Then, $\tilde{I}(\rho)\in C^{3}(1-a,1)$.
\end{lemma}
\begin{proof}
 The proof follows easily by the fact that $f \in C^{4}$ and Lemma \ref{log+c1}.
\end{proof}

\item \textbf{Existence of a solution for the equation \eqref{simplificada}.}
 We will regard our operator $\Theta^3$  as a perturbation of its symmetric part $\Theta_S^3$ (which we can do by taking $a$ and $\beta$ small enough). Then we estimate the first and second eigenvalues (and the first eigenfunction) and see that there is a gap. If the antisymmetric perturbation is small enough, then there is still a gap and we are good. The strategy is to use a computer-assisted proof for the estimation of the eigenvalue-eigenvector pair and the norms of the different operators that appear.

Let $(B_{sj}, \lambda^*)$ be the approximate eigenvector-eigenvalue pair for the symmetric operator $\Theta^3_{S}$, satisfying
\begin{align}\label{casi}
\Theta_{S}^3 B_{sj} = \lambda^{*} B_{sj} + e,
\end{align}

where $e$ is small (see the next lemma for an explicit bound on $e$).

\begin{lemma}\label{lemacotae}
 Let $B_{sj} = \frac{1}{\sqrt{a-a\beta}}{1}_{[1-a+\frac{a\beta}{2},1-\frac{a\beta}{2}]}$ and $\lambda^{*} = 0.3482$. Then:
\begin{align*}
 \|e\|_{L^{2}} < 0.0905 \\
|\langle e, B_{sj} \rangle | < 0.0101
\end{align*}

\end{lemma}
\begin{proof}
 The proof is computer-assisted and the code can be found in the supplementary material. We refer to the appendix for details about the implementation.
\end{proof}

 The symbol $B_{sj}^\perp$ will denote the orthogonal space to $B_{sj}$ in $L^2((1-a,1))$, i.e.
 $$B_{sj}^\perp\equiv \{ v\in L^2((1-a,1)),\,\:\, \langle B_{sj}, v\rangle_{L^2((1-a,1))}=0\}.$$

 Let us find a solution $(B^{3},\lambda)$ such that
\begin{align}\label{eqnB3}
\Theta^3 B^{3} = \lambda B^{3}, \quad B= B_{sj} + v, \quad v \in B_{sj}^{\perp}.
\end{align}
We expect $v$ to be small if $(B_{sj},\lambda^{*})$ is an accurate enough approximation to the true eigenpair. Plugging this ansatz for  $B^{3}$ into the previous equation and using the equation \eqref{casi}, we obtain
\begin{align*}
\Theta^3 v & = \Theta^3 B^{3} - \Theta^3 B_{sj} \\
& = \lambda B_{sj} + \lambda v - \lambda^{*} B_{sj} - e - \Theta^3_{A} B_{sj} \\
& = (\lambda - \lambda^{*})B_{sj} + \lambda v  - e - \Theta^3_{A} B_{sj},
\end{align*}

where $\Theta^{3}_{A} = \Theta^{3} - \Theta^{3}_{S}$ is the antisymmetric part of $\Theta^{3}$. From now on, the pairing $\langle \cdot, \cdot \rangle $ is assumed to be taken in $L^2((1-a,1))$. Let us consider the functional equation

\begin{align*}
\langle \Theta^3 v - \lambda v, u\rangle = - \langle e + \Theta^3_{A} B_{sj}, u \rangle \quad \forall u \in B_{sj}^{\perp},
\end{align*}

and let $c^{*}$ be defined by

\begin{align*}
c^{*} = \inf_{v \in B_{sj}^{\perp}} \frac{\langle \Theta^3 v, v\rangle }{\|v\|_{L^{2}}^{2}}.
\end{align*}

A rigorous bound for $c^{*}$ will be given in the next lemma:

\begin{lemma} \label{lemacotascestrella}
 Let  $B_{sj} = \frac{1}{\sqrt{a-a\beta}}{1}_{[1-a+\frac{a\beta}{2},1-\frac{a\beta}{2}]}$. Then:
\begin{align*}
 c^{*} \geq 0.8526
\end{align*}

\end{lemma}
\begin{proof}
 The proof is computer-assisted and the code can be found in the supplementary material. We refer to the appendix for details about the implementation.
\end{proof}

Then, if $\lambda < c^{*}$ the operator $\Theta^3 - \lambda$ is coercive in $B_{sj}^{\perp}$. By Lax-Milgram, for every $\lambda < c^{*}$ there exists a function $v^{\lambda} \in B_{sj}^{\perp}$ such that
\begin{align}\label{lm}
\langle \Theta^3 v^{\lambda} - \lambda v^{\lambda}, u\rangle = - \langle e + \Theta^3_{A} B_{sj}, u \rangle \quad \forall u \in B_{sj}^{\perp}.
\end{align}
Thus, there exists a function $d(\lambda)$ such that
\begin{align}
\label{ecuacionalphadelambda}
\Theta^{3} v^{\lambda} & = (\lambda - \lambda^{*})B_{sj} + \lambda v^{\lambda}  - e - \Theta^{3}_{A} B_{sj} + d(\lambda) B_{sj}.
\end{align}
By computing the scalar product of \eqref{ecuacionalphadelambda} with $v^{\lambda}$ we obtain
\begin{align*}
(c^{*} - \lambda) \|v^{\lambda}\|_{L^{2}}^{2} \leq \langle (\Theta^3 - \lambda) v^{\lambda}, v^{\lambda} \rangle = - \langle \Theta^3_{A} B_{sj}, v^{\lambda} \rangle - \langle e, v^{\lambda} \rangle
\leq |\langle \Theta^3_{A} B_{sj} + e, v^{\lambda}\rangle|
\leq \|\Theta^3_{A} B_{sj} + e\|_{L^{2}} \|v^{\lambda}\|_{L^{2}}.
\end{align*}
This inequality implies
\begin{align}\label{cotav}
\|v^{\lambda}\|_{L^{2}} \leq \frac{\|\Theta^3_{A} B_{sj} + e\|_{L^{2}}}{c^{*} - \lambda}.
\end{align}
In addition, using that
\begin{align*}
\langle \Theta^3 v^{\lambda}, B_{sj} \rangle & = \langle \Theta^3_{S} v^{\lambda}, B_{sj} \rangle + \langle \Theta^3_{A} v^{\lambda}, B_{sj} \rangle  = \langle v^{\lambda}, \Theta^3_{S} B_{sj} \rangle - \langle  v^{\lambda}, \Theta^3_{A} B_{sj} \rangle  = \langle v^{\lambda},e \rangle - \langle  v^{\lambda}, \Theta^3_{A} B_{sj} \rangle,
\end{align*}
taking the scalar product with $B_{sj}$ of the  equation \eqref{ecuacionalphadelambda} yields
\begin{align*}
d(\lambda) \|B_{sj}\|_{L^{2}}^{2} & = \langle \Theta^3 v^{\lambda}, B_{sj} \rangle + (\lambda^{*} - \lambda) \|B_{sj}\|_{L^{2}}^{2} + \langle e, B_{sj} \rangle \\
& = \langle v^{\lambda},e \rangle - \langle  v^{\lambda}, \Theta^3_{A} B_{sj} \rangle + (\lambda^{*} - \lambda) \|B_{sj}\|_{L^{2}}^{2} + \langle e, B_{sj} \rangle.
\end{align*}

An important fact, that will be used, is the continuity of this function $d(\lambda)$:
\begin{lemma}\label{ddl}The function $d(\lambda)$ defined in \eqref{ecuacionalphadelambda}
is continuous.
\end{lemma}
\begin{proof}
Let $\lambda,\gamma <c^*$. By definition of $d(\lambda)$ we have that
\begin{align*}
|d(\lambda)-d(\gamma)|\leq C\left(||B_{sj}||_{L^2},||e||_{L^2},||\Theta^3||_{L^2\to L^2}\right)||v^\lambda-v^\gamma||_{L^2}+||B_{sj}||_{L^2}^2|\lambda-\gamma|.
\end{align*}
For every $u\in B^\perp_{sj}$ we know from \eqref{lm} that
\begin{align*}
\langle \Theta^3 v^{\lambda}-\lambda v^{\lambda}, u\rangle = & \langle f, u \rangle\\
\langle \Theta^3 v^{\gamma}-\gamma v^{\gamma}, u\rangle = & \langle f, u \rangle,
\end{align*}
where $f\in L^2$. And then
\begin{align*}
\langle \Theta^{3}(v^\lambda-v^\gamma)-\lambda (v^\lambda-v^\gamma),u\rangle=(\gamma-\lambda)\langle v^\gamma, u  \rangle.
\end{align*}
Taking $u=v^\lambda-v^\gamma\in B^\perp_{sj}$ yields
\begin{align*}
(c^*-\lambda)||v^\lambda-v^\gamma||_{L^2}^2\leq |\gamma-\lambda|\,||v^\gamma||_{L^{2}}\,||v^\lambda-v^\gamma||_{L^2}.
\end{align*}
Thus
\begin{align*}
||v^\lambda-v^\gamma||_{L^2}\leq \frac{C}{c^*-\lambda}|\gamma-\lambda|\,||v^\gamma||_{L^2}.
\end{align*}
We achieve the conclusion of the lemma from the inequality \eqref{cotav}.
\end{proof}
Our purpose is now to show that there exists a value of $\lambda =\lambda^{**} < c^{*}$ such that $d(\lambda^{**})=0$. We have the following upper bound for $d(\lambda)$:
\begin{align}\label{ddlambda}
d(\lambda) & \geq -\frac{\|v^{\lambda}\|_{L^{2}} \|e - \Theta^3_{A} B_{sj}\|_{L^{2}}}{\|B_{sj}\|_{L^{2}}^{2}}  + (\lambda^{*} - \lambda) - \frac{|\langle e, B_{sj} \rangle|}{\|B_{sj}\|_{L^{2}}^{2}} \\
& \geq -\frac{\|\Theta^3_{A} B_{sj} + e\|_{L^{2}}}{c^{*} - \lambda}\frac{\|e - \Theta^3_{A} B_{sj}\|_{L^{2}}}{\|B_{sj}\|_{L^{2}}^{2}}  + (\lambda^{*} - \lambda) - \frac{|\langle e, B_{sj} \rangle|}{\|B_{sj}\|_{L^{2}}^{2}}\nonumber.
\end{align}
We observe that $\lim_{\lambda \to -\infty}d(\lambda) = +\infty$. Hence, there exists a $\lambda$ sufficiently small for which $d(\lambda) > 0$. Similarly:
\begin{align}\label{cotad}
d(\lambda) & \leq \frac{\|\Theta^3_{A} B_{sj} + e\|_{L^{2}}}{c^{*} - \lambda}\frac{\|e - \Theta^3_{A} B_{sj}\|_{L^{2}}}{\|B_{sj}\|_{L^{2}}^{2}}  + (\lambda^{*} - \lambda) + \frac{|\langle e, B_{sj} \rangle|}{\|B_{sj}\|_{L^{2}}^{2}}
\end{align}
The objective is to use the previous inequality to show that there exist a value of $\lambda$ smaller than $c^*$, such that $d(\lambda)<0$. We will use the following lemma.
\begin{lemma}
\label{lemavertice}
Let $h(x) = \mathcal{A} - x + \frac{\mathcal{B}}{\mathcal{C}-x}$, where $\mathcal{A},\mathcal{B},\mathcal{C} > 0$. If

\begin{align*}
\mathcal{C} > \mathcal{A} + 2\sqrt{\mathcal{B}},
\end{align*}

then there exists an $x < \mathcal{C}$ such that $h(x) < 0$.
\end{lemma}
\begin{proof}
It is easy to notice that $h(x)$ is concave up since $h''(x) > 0$ for $x < \mathcal{C}$. Thus, it is enough to check that the minimum of $h$ is negative. We calculate the point $x_{m}$ where the minimum is attained.

\begin{align*}
h'(x_{m}) = 0 \Leftrightarrow (\mathcal{C}-x_{m})^{2} = \mathcal{B} \Leftrightarrow x_{m} = \mathcal{C} - \sqrt{\mathcal{B}}.
\end{align*}

Evaluating at $x = x_{m}$:

\begin{align*}
h(x_{m}) = \mathcal{A} - \mathcal{C} + 2\sqrt{\mathcal{B}} < 0,
\end{align*}

by hypothesis.
\end{proof}

\begin{lemma} \label{lemacotasABC}
 Let $B_{sj} = \frac{1}{\sqrt{a-a\beta}}{1}_{[1-a+\frac{a\beta}{2},1-\frac{a\beta}{2}]}$, $\lambda^{*} = 0.3482$ and let us define:

\begin{align*}
\mathcal{A} = \lambda^{*} + \frac{|\langle e, B_{sj} \rangle|}{\|B_{sj}\|_{L^{2}}^{2}}, \quad \mathcal{B} = \frac{\|\Theta^3_{A} B_{sj} + e\|_{L^{2}}\|e - \Theta^{3}_{A} B_{sj}\|_{L^{2}}}{\|B_{sj}\|_{L^{2}}^{2}}
\end{align*}

Then, we have the following bounds:
\begin{align*}
\mathcal{A} < 0.3583 \\
\sqrt{\mathcal{B}} < 0.1534 \\
\end{align*}

\end{lemma}
\begin{proof}
 The proof is computer-assisted and the code can be found in the supplementary material. We refer to the appendix for details about the implementation.
\end{proof}

Finally, by applying Lemma \ref{lemavertice} to the right hand side of \eqref{cotad}  with

\begin{align*}
\mathcal{A} = \lambda^{*} + \frac{|\langle e, B_{sj} \rangle|}{\|B_{sj}\|_{L^{2}}^{2}}, \quad \mathcal{B} = \frac{\|\Theta^3_{A} B_{sj} + e\|_{L^{2}}\|e - \Theta^{3}_{A} B_{sj}\|_{L^{2}}}{\|B_{sj}\|_{L^{2}}^{2}}, \quad \mathcal{C} = c^{*}
\end{align*}

and by the bounds given by Lemma \ref{lemacotasABC}, we obtain that there exists a $\lambda_0$ for which $d(\lambda_0) < 0$. More precisely, we have that

\begin{align*}
 \lambda_{0} & \leq \frac{\mathcal{C}+\mathcal{A}}{2} - \sqrt{\left(\frac{\mathcal{C}-\mathcal{A}}{2}\right)^{2} - \mathcal{B}} = \frac{\mathcal{A}\mathcal{C}+\mathcal{B}}{\frac{\mathcal{A}+\mathcal{C}}{2} + \sqrt{\left(\frac{\mathcal{A}-\mathcal{C}}{2}\right)^{2} - \mathcal{B}}} \\
& = \frac{c^{*} + \lambda^{*} + \frac{|\langle e, B_{sj} \rangle|}{\|B_{sj}\|_{L^{2}}^{2}}}{2} - \sqrt{\left(\frac{c^{*} - \lambda^{*} - \frac{|\langle e, B_{sj} \rangle|}{\|B_{sj}\|_{L^{2}}^{2}}}{2}\right)^{2}-\frac{\|\Theta^3_{A} B_{sj} + e\|_{L^{2}}\|e - \Theta^{3}_{A} B_{sj}\|_{L^{2}}}{\|B_{sj}\|_{L^{2}}^{2}}}
\end{align*}

By the continuity of $d(\lambda)$ proved in Lemma \ref{ddl}, there has to be a $\lambda^{**} \leq \lambda_{0} < c^{*}$
for which $d(\lambda^{**}) = 0$ and therefore

\begin{align*}
\Theta v^{\lambda^{**}} & = (\lambda^{**} - \lambda^{*})B_{sj} + \lambda^{**} v^{\lambda^{**}}  - e - \Theta_{A} B_{sj}.
\end{align*}

which means that $( B_{sj} + v^{\lambda^{**}}, \lambda^{**})$ is the eigenvalue-eigenvector pair, $(B^3,\lambda_3)\in L^2((1-a,1))\times \RR$, we were looking for.

\begin{corollary}
\label{corollarylambda0}
 We have just shown that $\lambda_3 \leq \lambda_{0} < 0.4117$.
\end{corollary}
\begin{proof}
 Follows from the numerical bounds obtained in Lemma \ref{lemacotasABC} and Lemma \ref{lemavertice}.
\end{proof}

Next we show that $B^3$ is unique modulo multiplication by constants. In order to prove it, let us assume that   $w\in  L^2((1-a,1))$ satisfies
\begin{align*}
\Theta^3 w=\lambda_3 w.
\end{align*}
Now we write $w=B_{sj}+(w-B_{sj})$ and we notice we can decompose $w-B_{sj}=\alpha B_{sj}+v$ where $\alpha\in \RR$ and $v\in B_{sj}^\perp$. Then $w=(1+\alpha)B_{sj}+ v$ and by linearity we have that $\frac{w}{1+\alpha}=B_{sj}+ u$, with $u=\frac{v}{1+\alpha}\in B_{sj}^\perp$, is also a solution. Then the uniqueness of the solutions in the Lax-Milgram theorem implies $\frac{w}{1+\alpha}=B^3$. If $\alpha=-1$ the previous argument fails. But, in this case, $w\in B_{sj}^\perp$ and then $c^*\leq \lambda_3$ which has  already been proven to be false.

In conclusion, we have shown that dim($\mathcal{N}(\Theta^{3} - \lambda^{3})) = 1$.

\begin{lemma}
\label{lemmaImayorquelambda}
We have that $\tilde{I}(\rho)-\lambda_3 > 0$. In particular, we have the following bounds:

\begin{align*}
 \tilde{I}(\rho)-\lambda_3 > 0.8526
\end{align*}

\end{lemma}
\begin{proof}
 The proof is computer-assisted and the code can be found in the supplementary material. We refer to the appendix for details about the implementation.
\end{proof}

Then it remains to prove the regularity of $B^3$, the solution of equation \eqref{eqnB3}. To do this we will bootstrap using  Lemma \ref{lemacompacidad}. Since
\begin{align}\label{lades}
\tilde{I}(\rho)B^3(\rho)-\lambda_3 B^3(\rho)=-\tilde{T}^3B^3(\rho)
\end{align}
by Lemma \ref{lemacompacidad} the function $(\tilde{I}(\rho)-\lambda_3) B^3(\rho)\in H^1((1-a,1))$. Since $\tilde{I}\in C^3$  and Lemma \ref{lemmaImayorquelambda}
 we have that $B^3(\rho)$ is in $H^1$. Let's take two derivatives on equation \eqref{lades}, by Lemma \ref{Tderivadas} we have that
 \begin{align*}
 \tilde{I}(\rho)\pa^{2}_\rho B^3(\rho)-\lambda_3\pa^{2}_\rho B^3(\rho)=&-\sum_{j=1}^2\pa_\rho^{j)} \tilde{I}(\rho)\pa^{2-j)}_\rho B^3(\rho)-\pa_\rho T^{3}_1\pa_\rho B^3(\rho)\\&-\pa_\rho\left(\frac{1}{\rho}\inti \pa^{2}_\rho f(\rho') B(\rho')\frac{\rho'}{\rho}K^{m}\left(\frac{\rho}{\rho'}\right)d\rho'\right)
 \end{align*}
 Then using that $\tilde{I}(\rho)\in C^3$, $f\in C^4$, $\tilde{I}-\lambda_3>0$, Lemma \ref{lemacompacidad} and $B^3\in H^1$ we have that $B^3\in H^2$. Finally, taking three derivatives yields
 \begin{align*}
 \tilde{I}(\rho)\pa^{3}_\rho B^3(\rho)-\lambda_3\pa^{3}_\rho B^3(\rho)=&-\sum_{j=1}^3\pa_\rho^{j)} \tilde{I}(\rho)\pa^{3-j)}_\rho B^3(\rho)-\pa_\rho T^{3}_2\pa^2_\rho B^3(\rho)\\&-\pa_\rho\left(\sum_{j=1}^2\frac{1}{\rho}\inti \pa^{j+1)}_\rho f(\rho') \pa^{2-j)}_\rho B(\rho')\left(\frac{\rho'}{\rho}\right)^2 K^{m}\left(\frac{\rho}{\rho'}\right)d\rho'\right)
 \end{align*}
  Then using, again, that $\tilde{I}(\rho)\in C^3$, $f\in C^4$, $\tilde{I}-\lambda_3>0$, Lemma \ref{lemacompacidad} and $B^3\in H^2$ we have that $B^3\in H^3.$
\end{enumerate}
This concludes the proof of Proposition \ref{btres}.

\end{proof}

A similar proof as in Proposition \ref{btres} also works to show:
\begin{prop} \label{btresadj} There exists a solution $(B^3_*,\lambda_3)\in H^3((1-a),1)\times \RR$ to the equation
\begin{align*}
\Theta^{3*}B^3_*=\lambda_3B^3_*,
\end{align*}
 where $\lambda_3$ is the same eigenvalue as in Proposition \ref{btres} and $\Theta^{3*}$ is the adjoint operator of $\Theta^{3}$.
 In addition, $\lambda_3$ is simple and we can decompose $$B_*^3=B_{sj}+ v_*^{\lambda_3},$$ where $v_*^{\lambda_3}\in \left(B_{sj}\right)^\perp$

and
 \begin{align*}
 ||v_*^{\lambda_3}||_{L^2}\leq \frac{||-\Theta^3_A B_{sj}+e||_{L^{2}}}{c^*-\lambda_3}
 \end{align*}
\end{prop}
\begin{proof} The proof that there exists a pair $(B^3_*,\lambda_3^*)\in L^2\times \RR$ satisfying
\begin{align*}
(\Theta^{3*}-\lambda_3^*)B^3_*=\lambda_3^*B^3_*
\end{align*}

runs the same steps than the proof of  Proposition \ref{btres}. The only modification is the change $\Theta^3_A\to -\Theta^3_A$. In order to check that, in fact, $\lambda_3^*=\lambda_3$ we notice that
\begin{align*}
\lambda_3\langle B^3,B^3_*\rangle=\langle \Theta^3 B^3, B^3_*\rangle =\langle B^3, \Theta^{3*}B^*_3\rangle=\lambda_3^* \langle B^3,B^3_*\rangle.
\end{align*}
Therefore it is enough to show that $\langle B^3,B^3_*\rangle \neq 0$. We prove this result in the following lemma:
\begin{lemma}\label{BBstar}The following inequality holds:
\begin{align*}\langle B^3,B^3_*\rangle >0.\end{align*}
\end{lemma}
\begin{proof}
We can decompose $B^3$ and $B^3_*$ in the following way
\begin{align*}
B^3= & B_{sj}+v^{\lambda_3}\\
B^3_*= & B_{sj}+v^{\lambda_3}_*
\end{align*}
where $v^{\lambda_3},\, v^{\lambda_3}_*\in B_{sj}^\perp$. Thus
\begin{align}\label{eqB3B3star}\langle B^3,B^3_* \rangle = ||B_{sj}||_{L^2}^2+\langle v^{\lambda_3}, v^{\lambda_3}_*\rangle\geq ||B_{sj}||_{L^2}^2-\frac{||-\Theta_A^3B_{sj}+e||_{L^2}||\Theta_A^3B_{sj}+e||_{L^2}}{(c^*-\lambda_3)^2}.\end{align}

Using the notation from Lemma \ref{lemavertice}, we can bound

\begin{align*}
 c^{*} - \lambda_{3} \geq c^{*} - \lambda_{0} = \frac{C-A}{2} + \sqrt{\left(\frac{C-A}{2}\right)^{2} - B}.
\end{align*}

This implies that the RHS of \eqref{eqB3B3star} is bounded below by

\begin{align*}
 ||B_{sj}||_{L^2}^2\left(1 - \frac{B}{\left(\frac{C-A}{2} + \sqrt{\left(\frac{C-A}{2}\right)^{2} - B}\right)^{2}}\right) > 0,
\end{align*}

where in the last inequality we have used the already checked condition that

\begin{align*}
 \frac{C-A}{2} > \sqrt{B}.
\end{align*}

\end{proof}

Finally we prove that $B^3_*$ is in $H^3$ in the same way we did for $B^3$ by using Lemma \ref{Tstarcompacidad}.
\end{proof}

\subsubsection{One-dimensionality of the kernel of $\pa_r F[\rho,\lambda_3]$}\label{uniqueness}
Until now, it has already been proven that  there exists an element in the kernel of the operator $\pa_r F[\rho,\lambda_3]$. In this section we will prove that this kernel is the span of this element. As commented in point 3 at the beginning of the proof of Proposition \ref{kernel} it is enough to prove that the equation
$$\Theta^{3n} u=\lambda_3u$$
implies   that $u=0$ for $n>1$.
\begin{lemma}\label{cotalambda1}
 Let $m>1$ and
 \begin{align*}\lambda_{3m}^{s}=\inf_{\substack{u\in L^2 \\ ||u||_{L^2}=1}} \langle \Theta^{3m}u,u\rangle\end{align*}
  Then, if the pair $(u_{3m},\lambda_{3m})\in L^2 \times \RR$ (with $u_{3m}$ not identically zero) satisfies
  \begin{align*}
  \Theta^{3m}u_{3m}=\lambda_{3m},
  \end{align*}
  we have that
\begin{align*}
 \lambda_{3m}^{s} \leq \lambda_{3m}
\end{align*}

\end{lemma}

\begin{proof}
Since we can take $u_{3m}$ with norm 1
we have that
\begin{align*}
 \lambda_{3m} = \langle \Theta^{3m} u_{3m}, u_{3m} \rangle \geq \lambda_{3m}^{s},
\end{align*}
by definition of $\lambda_{3m}^{s}$.
\end{proof}

\begin{lemma}\label{mn}
 Let $m > n$ and let $\lambda_{3m}^{s}, \lambda_{3n}^{s}$ be given by
 \begin{align*}
 \lambda_{3m}^{s}=\inf_{\substack{u\in L^2 \\ ||u||_{L^2}=1}} \langle \Theta^{3m} u,u\rangle\quad \text{and}\quad \lambda_{3n}^{s}=\inf_{\substack{u\in L^2 \\ ||u||_{L^2}=1}} \langle \Theta^{3n}u,u\rangle .
 \end{align*}
Then
\begin{align*}
 \lambda_{3m}^{s} \geq \lambda_{3n}^{s}
\end{align*}

\end{lemma}

\begin{proof}
First we will show that for every $m\geq 1$ and $u\in L^2$ the following inequality  holds $$\langle \Theta^{3m}u,u\rangle \geq \langle\Theta^{3m}|u|,|u|\rangle.$$
By using the positivity of $T^{3m}(\rho,\rho')$ proved in Lemma \ref{positivo} we have that
\begin{align*}
\langle \Theta^{3m}|u|,|u| \rangle & = \int \tilde{I}(\rho)|u(\rho)|^{2} d\rho +\int \int \frac{1}{\rho}\left(\frac{\rho'}{\rho}\right)T^{3m}(\rho,\rho')f_\rho(\rho')|u(\rho)||u(\rho')| d\rho d\rho' \\
& = \int I(\rho)|u(\rho)|^{2} d\rho + \int \int \frac{1}{\rho}\left(\frac{\rho'}{\rho}\right)T^{3m}(\rho,\rho')f_\rho(\rho')u^{+}(\rho)u^{+}(\rho') d\rho d\rho'\\ & + \int \int \frac{1}{\rho}\left(\frac{\rho'}{\rho}\right)T^{3m}(\rho,\rho')f_\rho(\rho')u^{-}(\rho)u^{-}(\rho') d\rho d\rho' \\
& + \int \int \frac{1}{\rho}\left(\frac{\rho'}{\rho}\right)T^{3m}(\rho,\rho')f_\rho(\rho')u^{+}(\rho)u^{-}(\rho') d\rho d\rho'\\& + \int \int \frac{1}{\rho}\left(\frac{\rho'}{\rho}\right)T^{3m}(\rho,\rho')f_\rho(\rho')u^{-}(\rho)u^{+}(\rho') d\rho d\rho' \\
& \leq \int I(\rho)|u(\rho)|^{2} d\rho + \int \int \frac{1}{\rho}\left(\frac{\rho'}{\rho}\right)T^{3m}(\rho,\rho')f_\rho(\rho')u^{+}(\rho)u^{+}(\rho') d\rho d\rho' \\ & + \int \int \frac{1}{\rho}\left(\frac{\rho'}{\rho}\right)T^{3m}(\rho,\rho')f_\rho(\rho')u^{-}(\rho)u^{-}(\rho') d\rho d\rho' \\
& - \int \int\frac{1}{\rho}\left(\frac{\rho'}{\rho}\right) T^{3m}(\rho,\rho')f_\rho(\rho')u^{+}(\rho)u^{-}(\rho') d\rho d\rho' \\ &- \int \int \frac{1}{\rho}\left(\frac{\rho'}{\rho}\right)T^{3m}(\rho,\rho')f_\rho(\rho')u^{-}(\rho)u^{+}(\rho') d\rho d\rho' \\
& = \int I(\rho)u(\rho)^{2} d\rho + \int \int \frac{1}{\rho}\left(\frac{\rho'}{\rho}\right)T^{3m}(\rho,\rho')f_\rho(\rho')u(\rho)u(\rho') d\rho d\rho' \\
& = \langle \Theta^{3m}u, u \rangle
\end{align*}
Then $$\inf_{u\in L^2\atop ||u||_{L^2}=1}\langle \Theta^{3m} u , u\rangle=\inf_{u\in L^2\atop ||u||_{L^2}=1,\, u\geq 0}\langle \Theta^{3m} u , u\rangle,$$
Now, for a positive $u\in L^2$, we write
 \begin{align*}
 \langle \Theta^{3m}u,u \rangle = \langle \left(\Theta^{3m}-\Theta^{3n}\right)u,u\rangle + \langle \Theta^{3m}u,u \rangle.
 \end{align*}
 where
\begin{align*}
 \left(\Theta^{3m}-\Theta^{3n}\right)u=\frac{1}{\rho}\inti f_\rho(\rho')\left(K^{3m}-K^{3n}\right)\left(\frac{\rho}{\rho'}\right) u(\rho')d\rho'
 \end{align*}
and notice that
\begin{align*}
\frac{1}{\rho}\inti f_\rho(\rho')\left(K^{3m}-K^{3n}\right)\left(\frac{\rho}{\rho'}\right) u(\rho')d\rho'\geq 0
\end{align*}
by Lemma \ref{positivo}.

We then obtain that
 \begin{align*}
 \langle \Theta^{3m}u,u \rangle \geq  \langle \Theta^{3n}u,u \rangle,
 \end{align*}
 for every positive $u\in L^2$.
 This concludes the proof of the lemma.
 \end{proof}

 Now our purpose is to find a bound from below for the number $\lambda_s^6$ defined by  $$\lambda_s^{6} =\inf_{u\in L^2 \atop ||u||_{L^2}=1} \langle \Theta^6 u, u\rangle.$$
 In order to do this we will need Lemmas \ref{lemacotae6} and \ref{lemacotascestrella6} below.

 \begin{lemma}\label{lemacotae6} Let $B^{6\text{aprox}}_s= \frac{1}{\sqrt{a-a\beta}}{1}_{[1-a+\frac{a\beta}{2},1-\frac{a\beta}{2}]}$ and $\lambda^{s}_{6\text{aprox}}= 0.573$. Then
 \begin{align*}
 \Theta^6_S B^{6\text{aprox}}_s-\lambda^{s}_{6\text{aprox}}B^{6\text{aprox}}_s=e^6
 \end{align*}
with
$$||e^6||_{L^2} < 0.0893.$$
\end{lemma}
 \begin{proof} The proof is computer-assisted and the codes can be found in the supplementary material. We refer to the appendix for the implementation.
 \end{proof}

\begin{lemma}\label{lemacotascestrella6} Let $B^{6\text{aprox}}_s$ be the approximation in Lemma \ref{lemacotae6}. Then, if we define the number $c^{6*}$ by
\begin{align*}
c^{6*}=\inf_{v\in \left(B^{6\text{aprox}}_s\right)^\perp\atop ||v||_{L^2}=1} \langle \Theta^6 v,v\rangle,
\end{align*}
the following bound holds
\begin{align*}
c^{6*}> 0.8355.
\end{align*}
\end{lemma}

\begin{proof}
The proof is computer-assisted and the codes can be founded in the supplementary material. We refer to the appendix for details on the implementation.
\end{proof}

\begin{lemma}\label{lemacotalambda6}
 Let  $\lambda_{6}^{s}\in\RR$ given by \begin{align*}
 \lambda_6^{s} =\inf_{u\in L^2 \atop ||u||_{L^2}=1} \langle \Theta^6 u, u\rangle.
 \end{align*}   Then, we have the following bound:

\begin{align*}
 \lambda_{6}^{s} > 0.4837.
\end{align*}

\end{lemma}

\begin{proof} For a generic $B\in L^2$ with $||B||_{L^2}=1$ we can decompose $B=\alpha B^{6\text{aprox}}_{s}+\beta v$, where $v\in \left(B^{6\text{aprox}}_{s}\right)^\perp$, $||v||_{L^2}=1$  and $\alpha^2+\beta^2=1$. Therefore
\begin{align*}
\langle \Theta^6_{S}B, B\rangle &=\alpha^2\lambda^{s}_{6\text{aprox}}+ \beta^2\langle \Theta^6_{S}v,v \rangle+ 2\alpha \beta\langle e^6, v\rangle \\
& \geq \alpha^2\lambda^{s}_{6\text{aprox}}+ \beta^2\langle \Theta^6_{S}v,v \rangle - 2\alpha \sqrt{1-\alpha^2}\langle e^6, v\rangle.
\end{align*}
Finally we obtain that
\begin{align*}
\langle \Theta^6_{S}B, B\rangle \geq \lambda^{s}_{6\text{aprox}}-||e^6||_{L^2},
\end{align*}
since $$\inf_{v\in\left(B^{6\text{aprox}}_{s}\right)^\perp\atop ||v||_{L^2}=1 }\langle \Theta^6_{S} v,v\rangle\geq c^{6*}>\lambda^{s}_{6\text{aprox}}$$ by Lemmas \ref{lemacotae6} and \ref{lemacotascestrella6}.
\end{proof}

Using  Lemmas \ref{cotalambda1}, \ref{mn} and \ref{lemacotalambda6} we can prove the following proposition:

\begin{prop} \label{thetaicoer}The bilinear forms $\langle (\Theta^{3m}-\lambda_3)v,v\rangle$ are coercive in $L^2$ for $m>1$. In addition
\begin{align*}
\langle (\Theta^{3m}-\lambda_3)v,v\rangle\geq \frac{1}{\lambda^{s}_6-\lambda_3}||v||_{L^2}^2.
\end{align*}
Therefore,  if $u\in L^2$ satisfies\begin{align*}
 \Theta^{3m} u = \lambda_3 u,
\end{align*}
for $m>1$ then $u = 0$.
\end{prop}

Proposition \ref{kernel} is then proven.

\end{proof}

To finish this section we study the codimension of the image of the operator $\pa_rF[\rho,\lambda_3]$.
\begin{prop}\label{codimension} The space $H^{3,3}_{3,\text{odd}}(\Omega_a)/\mathcal{R}\left(\pa_rF[\rho,\lambda_3]\right)$ has dimension one.
\end{prop}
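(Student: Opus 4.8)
The plan is to show that $\pa_rF[\rho,\lambda_3]$ is a Fredholm operator of index zero between the Banach spaces $X = H^{4,3}_{3,\text{even}}(\Omega_a)$ and $Y = H^{3,3}_{3,\text{odd}}(\Omega_a)$, and then combine this with the one-dimensionality of the kernel established in Proposition \ref{kernel}. Recall from Lemma \ref{partialderivatives} that $\pa_rF[\rho,\lambda]\tilde r = I_0[\tilde r,\lambda] + I_1[\tilde r] + I_2[\tilde r] + I_3[\tilde r]$, where $I_0[\tilde r,\lambda] = \lambda\tilde r_\alpha$ is (after restricting to the $3$-fold even subspace and passing to the odd target) the ``principal part'' and the operators $I_1,I_2,I_3$ involve the kernels coming from the elliptic integrals with $f_\rho$ compactly supported in $(1-a,1)$. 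First I would observe that on the subspace of functions with frequencies multiples of $3$, the operator $\tilde r \mapsto \lambda_3\tilde r_\alpha$ already shifts $\cos(3m\alpha)\mapsto -3m\lambda_3\sin(3m\alpha)$, so after projecting onto Fourier modes the analysis reduces to the family of one-dimensional operators $\Theta^{3m} - \lambda_3$ on $L^2((1-a,1))$ (in the radial variable), exactly as in the kernel analysis.

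The key steps, in order: (i) Decompose $\pa_rF[\rho,\lambda_3]$ Fourier mode by mode as $\tilde r(\alpha,\rho) = \sum_m \rho\, b_{3m}(\rho)\cos(3m\alpha) \mapsto \sum_m \big(-3m\big)\big((\Theta^{3m}-\lambda_3) b_{3m}\big)(\rho)\,\rho\sin(3m\alpha)$, using the computation carried out in Section \ref{checking3} (the passage from \eqref{sinsimplificar} to \eqref{simplificada}); here one must check that the same algebraic simplification that produced $\Theta^m$ goes through for a general $\tilde r$ in $X$, not only for the kernel ansatz, and that the operator maps $X$ into $Y$ with the claimed regularity — but this is essentially already contained in Lemma \ref{lema42} and Lemma \ref{partialderivatives}. (ii) Use Lemma \ref{lemacompacidad} and Corollary \ref{H2H3}: the operators $\tilde T^m$ (hence $\tilde T^m$ in $\Theta^m = \tilde I + \tilde T^m$) are compact from $H^k$ to $H^{k+1}$, and multiplication by $\tilde I(\rho) - \lambda_3$ is (by Lemma \ref{lemmaImayorquelambda}, since $\tilde I - \lambda_3 > 0.8526 > 0$) an invertible bounded multiplication operator on each $H^k((1-a,1))$ with bounded inverse. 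Therefore each $\Theta^{3m} - \lambda_3 = (\tilde I - \lambda_3)\big(\mathrm{Id} + (\tilde I-\lambda_3)^{-1}\tilde T^{3m}\big)$ is a compact perturbation of an invertible operator, hence Fredholm of index zero on $L^2$, and by elliptic bootstrapping (as in the regularity argument for $B^3$ at the end of the proof of Proposition \ref{btres}) its cokernel is spanned by elements that are automatically in $H^3$. (iii) By Proposition \ref{thetaicoer}, for $m > 1$ the bilinear form $\langle(\Theta^{3m}-\lambda_3)v,v\rangle$ is coercive, so $\Theta^{3m}-\lambda_3$ is invertible for $m>1$ and contributes nothing to kernel or cokernel. (iv) For $m=1$, $\Theta^3 - \lambda_3$ has a one-dimensional kernel spanned by $B^3$ (Proposition \ref{btres}) and, being self-index-zero Fredholm, a one-dimensional cokernel; moreover, since $\lambda_3$ is simple and $\Theta^{3*}B^3_* = \lambda_3 B^3_*$ with $\langle B^3, B^3_*\rangle > 0$ (Proposition \ref{btresadj} and Lemma \ref{BBstar}), the cokernel of $\Theta^3-\lambda_3$ is spanned (in $L^2$, hence by the bootstrap, in $H^3$) by $B^3_*$. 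Assembling the modes, $\mathcal{R}(\pa_rF[\rho,\lambda_3])$ consists precisely of those $g = \sum_m \hat g_m(\rho)\sin(3m\alpha) \in Y$ such that the $m=1$ component, written in the radial variable, is orthogonal to $B^3_*$ in $L^2((1-a,1))$; this is a single linear condition, so the quotient is one-dimensional, with an explicit complement spanned by $\rho B^3_*(\rho)\sin(3\alpha)$ (or any fixed representative not satisfying the orthogonality condition).

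The main obstacle I expect is step (i)–(ii) bookkeeping at the level of the anisotropic spaces $H^{k,l}$: one must make sure that the mode-by-mode reduction is compatible with the $H^{4,3}\to H^{3,3}$ mapping properties, i.e.\ that summing the one-dimensional Fredholm statements over $m$ yields a genuine Fredholm statement for $\pa_rF[\rho,\lambda_3]$ on the full anisotropic space (this requires uniform-in-$m$ control — provided by the fact that $\tilde I - \lambda_3$ is bounded below uniformly, and that the $\tilde T^{3m}$ are uniformly bounded by $\tilde T^3$ via Lemma \ref{positivo} and monotonicity of $f$, and by Proposition \ref{thetaicoer} the $m>1$ operators are uniformly invertible with norm of the inverse bounded by $\lambda_6^s - \lambda_3$). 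Once the uniform bounds are in place, the compactness furnished by Corollary \ref{H2H3} upgrades to compactness of $\pa_rF[\rho,\lambda_3] - I_0[\cdot,\lambda_3]$ as a map $X \to Y$ relative to the invertible principal part, and the Fredholm alternative closes the argument; the dimension count then follows immediately from Proposition \ref{kernel}.
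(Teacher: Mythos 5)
Your proposal follows the same high-level strategy as the paper (mode-by-mode reduction, coercivity of $\Theta^{3m}-\lambda_3$ for $m>1$, one-dimensional obstruction at $m=1$ identified via the adjoint eigenfunction $B^3_*$, complement spanned by $\rho B^3_*(\rho)\sin(3\alpha)$), but the assembly step has a genuine gap that your sketch does not close. You frame the argument as ``principal part invertible $+$ compact remainder $\Rightarrow$ Fredholm index zero,'' but the claimed compactness is not established and is in fact the crux of the problem. First, your remainder $\pa_rF-I_0$ is certainly not compact: it still contains the multiplication operator $\tilde I(\rho)\pa_\alpha$, which maps $X\to Y$ boundedly but not compactly. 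What you should isolate as the principal part is $(\tilde I-\lambda_3)\pa_\alpha$, leaving $\sum_m 3m\,\rho\,\tilde T^{3m}b_{3m}\sin(3m\alpha)$ as the remainder. But whether \emph{this} is compact from $H^{4,3}_{3,\text{even}}$ to $H^{3,3}_{3,\text{odd}}$ is exactly what requires uniform-in-$m$ control of the $H^3$ operator norm of $\tilde T^{3m}$, and the paper's Corollary \ref{H2H3} explicitly says the $H^k\to H^{k+1}$ bound for $\tilde T^m$ \emph{depends on $m$}. Lemma \ref{lemacompacidad} gives a uniform $L^2\to L^2$ bound but an $m$-dependent $L^2\to H^1$ bound, so the invocation ``compactness furnished by Corollary \ref{H2H3}'' does not deliver the anisotropic-space compactness you need.

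What the paper actually does instead is bypass abstract Fredholm theory entirely and prove the quantitative estimate $\|g^{3m}\|_{H^3}\leq \frac{C}{3m}\|G^{3m}\|_{H^3}$ with $C$ independent of $m$ (Lemma \ref{rmm1}, Step 2). This is precisely the estimate that lets you sum the modes back into $H^{4,3}$, and it is nontrivial: taking $\pa_\rho$-derivatives of $(\Theta^{3m}-\lambda_3)g^{3m}=\frac{1}{3m}G^{3m}$ produces commutator terms involving $\pa_\rho\tilde T^{3m}$, which do not close naively because the $L^2\to H^1$ bound is $m$-dependent. The paper resolves this by rewriting the commutator (equations \eqref{primera}--\eqref{tercera}), applying an integration by parts, and invoking the specific cancellation structure of Lemma \ref{commutator}, namely that $\pa_s\big((1-s)T^m(s)\big)=-T^m(s)+E^m(s)$ with $\|E^m\|_{L^\infty}$ \emph{independent of $m$}. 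Without this structural input, the uniform $H^3$ bound --- and therefore the assembly of the one-dimensional cokernel statement in $H^{3,3}_{3,\text{odd}}$ --- does not follow from the generic facts you cite. The mode-by-mode picture and the identification of the cokernel representative are correct; what is missing is a proof of the uniform regularity estimate (or, equivalently, a proof of compactness of the remainder that does the same analytic work).
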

\begin{proof}
In order to prove this proposition we will study the range of $\pa_r F[\rho,\lambda_3]$. Let  $G(\alpha,\rho)\in H^{3,3}_{3,\text{odd}}(\Omega_a)$. We shall try to find $g(\alpha,\rho)\in H^{4,3}_{3,\text{even}}(\Omega_a)$ such that
\begin{align}\label{codeq}
\pa_{r}F[\rho,\lambda_3]g(\alpha,\rho)=G(\alpha,\rho)\end{align}
 By using the expansions
\begin{align*}
g(\alpha,\rho)= &\sum_{m=1}^\infty \rho g^{3m}(\rho)\cos(3m\alpha)\\
G(\alpha,\rho)= &\sum_{m=1}^\infty \rho G^{3m}(\rho)\sin(3m\alpha)
\end{align*} in \eqref{codeq} we have that
\begin{align*}
\sum_{m=1}^\infty 3m\rho\left(\Theta^{3m}g^{3m}(\rho)-\lambda_3 g^{3m}(\rho)\right)\sin(3m\alpha)=\sum_{m=1}^{\infty}\rho G^{3m}(\rho)\sin(3m\alpha).
\end{align*}
Taking the projection onto the $3m-$mode yields
\begin{align}\label{gG}
3m\left(\Theta^{3m}g^{3m}-\lambda_3 g^{3m}(\rho)\right)=G^{3m}(\rho)\quad \text{for $m=1,2,3,...$}
\end{align}

Next we shall study the existence of solutions for the equation \eqref{gG} in $L^2$ and after that  the $H^3$-regularity.

\begin{enumerate}
\item \textbf{Existence in $L^2$.} We will deal separately with  two cases: in Lemma \ref{emm1} we take $m>1$ and in Lemma \ref{emi1} $m=1$.

\begin{lemma}\label{emm1} For $m>1$ there exists an inverse operator
\begin{align*}
\left(\Theta^{3m}-\lambda_3\right)^{-1} \,:\, L^2\to L^2
\end{align*}
with norm bounded independently of $m$.
\end{lemma}
\begin{proof}
By Lemmas \ref{cotalambda1}, \ref{mn} and the explicit bound for $\lambda^s_6$ in Lemma \ref{lemacotalambda6} we have that the bilinear form $\langle(\Theta^{3m}-\lambda_3)v,v\rangle$ is coercive in $L^2$ with $$\langle(\Theta^{3m}-\lambda_3)v,v\rangle\geq \frac{1}{\lambda_6^s-\lambda_3}||v||_{L^2}^2\quad \text{for $m>1$}.$$ Since $G^{3m}\in H^3\subset L^2$ we can apply Lax-Milgram theorem in order to obtain the existence of an inverse operator $$(\Theta^{3m}-\lambda_3)^{-1} \,:\, L^2\to L^2.$$
\end{proof}

 \begin{lemma}\label{emi1} Let $B^{3}_*$ be defined by Proposition \ref{btresadj}. Then, if $G^3\in (B^{3}_*)^\perp$, there exists a solution, $g^{3}\in L^2$, to the equation \eqref{gG} with $m=1$. However, if $G^3\in \text{span}\left\{B_*^3\right\}$ there is no function in $L^2$ satisfying the equation \eqref{gG} with $m=1$.
 \end{lemma}
 \begin{proof}
 First we notice that we have already checked that $\langle (\Theta^3-\lambda_3)v,v\rangle$ is coercive in the space $B_{sj}^\perp$. In the next lemma we prove that it is also coercive in $(B^3_*)^\perp.$
\begin{lemma}\label{c}There exists a constant $c>\lambda_3$ such that
\begin{align*}
\inf_{v\in (B^3)^\perp\atop ||v||_{L_2}=1}\langle (\Theta^3-\lambda_3)v,v\rangle\geq c-\lambda_3.
\end{align*}
\end{lemma}
\begin{proof}
We take $v\in (B_*^3)^\perp$, with $L^2-$norm equal to 1. We can decompose $v$ in the following form $v=\alpha B_{sj}+\beta h$ where $h\in B^\perp_{sj}$ and $||h||_{L^2}=1$. Since $B^3_*=B_{sj}+v^{\lambda_3}_*$ we have that
\begin{align*}
\alpha ||B_{sj}||^2_{L^2}+\beta \langle h, v^{\lambda_3}_*\rangle=&0\\
\alpha^2 ||B_{sj}||_{L^2}^2+\beta^2=1.
\end{align*}
Therefore, we can write,
\begin{align*}
\langle (\Theta^3-\lambda_3)v,v\rangle =\langle (\Theta^3_S-\lambda_3)v,v\rangle,
\end{align*}
using that $\Theta^3_S B_{sj}=\lambda^* B_{sj}+e$,
\begin{align*}
&\langle (\Theta^3_S-\lambda_3)v,v\rangle=\alpha^2\langle(\Theta^3_S-\lambda_3)B_{sj},B_{sj}\rangle+ 2\alpha\beta\langle(\Theta^3_S-\lambda_3)B_{sj},h \rangle+\beta^2\langle (\Theta^3_S-\lambda_3)h,h \rangle \\
&=\alpha^2\langle (\lambda^*-\lambda_3)B_{sj}+e,B_{sj}\rangle+2\alpha\beta\langle (\lambda^*-\lambda_3)B_{sj}+e,h \rangle+\beta^2\langle (\Theta^3_S-\lambda_3)h,h \rangle.
\end{align*}

And substituting
\begin{align*}
\alpha=-\beta \frac{\langle h,v_*^{\lambda_3} \rangle}{||B_{sj}||_{L^2}^2}
\end{align*}
yields
\begin{align}
\langle (\Theta^3_S-\lambda_3)v,v\rangle& = \beta^2\left(\frac{\langle h,v_*^{\lambda_3} \rangle^2}{||B_{sj}||_{L^2}^4}\left((\lambda^*-\lambda_3)\|B_{sj}\|_{L^{2}}^{2} + \langle e, B_{sj} \rangle\right) +\langle (\Theta^3_S-\lambda_3)h,h\rangle- 2\frac{\langle h,v_*^{\lambda_3} \rangle}{||B_{sj}||_{L^2}^2}\left\langle e,h\right\rangle\right). \label{cuadraticahv}
\end{align}
Here we recall that
\begin{align*}
||v_*^{\lambda_3}||_{L^2}\leq & \frac{||-\Theta^3_A B_{sj}+e||_{L^2}}{c^*-\lambda_3}\\
\langle (\Theta_S^3-\lambda_3)h,h\rangle\geq & (c^*-\lambda_3)
\end{align*}
and notice that
\begin{align*}
\beta^2\geq \frac{1}{1+\frac{||-\Theta_A^3B_{sj}+e||^2_{L^2}}{(c^*-\lambda_3)^2||B_{sj}||_{L^2}^2}} > 0
\end{align*}
Therefore, it is enough to show that

\begin{align*}
\left(\frac{\langle h,v_*^{\lambda_3} \rangle^2}{||B_{sj}||_{L^2}^4}\left((\lambda^*-\lambda_3)\|B_{sj}\|_{L^{2}}^{2} + \langle e, B_{sj} \rangle\right) +\langle (\Theta^3_S-\lambda_3)h,h\rangle- 2\frac{\langle h,v_*^{\lambda_3} \rangle}{||B_{sj}||_{L^2}^2}\left\langle e,h\right\rangle\right) > 0.
\end{align*}

However, the LHS can be bounded from below by

\begin{align*}
c^* - \lambda_0 + \left(\frac{||-\Theta^3_A B_{sj}+e||_{L^2}}{\|B_{sj}\|_{L^{2}}^{2}(c^*-\lambda_0)}\right)^{2}\left(((\lambda^{*} - \lambda_{0})\|B_{sj}\|_{L^{2}}^{2} -|\langle e, B_{sj} \rangle |\right)
-  2\frac{||-\Theta^3_A B_{sj}+e||_{L^2}}{\|B_{sj}\|_{L^{2}}^{2}(c^*-\lambda_0)}\|e\|_{L^{2}},
\end{align*}

since

\begin{align*}
\langle (\Theta_S^3-\lambda_3)h,h\rangle & \geq  (c^*-\lambda_3) \geq c^{*} - \lambda_0 \\
\frac{\langle h,v_*^{\lambda_3} \rangle^2}{||B_{sj}||_{L^2}^4}(\lambda^*-\lambda_3)\|B_{sj}\|_{L^{2}}^{2} & \geq 
\left(\frac{||-\Theta^3_A B_{sj}+e||_{L^2}}{(c^*-\lambda_3)\|B_{sj}\|_{L^{2}}^{2}}\right)^{2}(\lambda^*-\lambda_3)\|B_{sj}\|_{L^{2}}^{2} \\
& \geq
\left(\frac{||-\Theta^3_A B_{sj}+e||_{L^2}}{(c^*-\lambda_0)\|B_{sj}\|_{L^{2}}^{2}}\right)^{2}(\lambda^*-\lambda_0)\|B_{sj}\|_{L^{2}}^{2} \\
\frac{\langle h,v_*^{\lambda_3} \rangle^2}{||B_{sj}||_{L^2}^4}\langle e, B_{sj} \rangle & \geq 
-\left(\frac{||-\Theta^3_A B_{sj}+e||_{L^2}}{(c^*-\lambda_3)\|B_{sj}\|_{L^{2}}^{2}}\right)^{2}\left|\langle e, B_{sj} \rangle\right| \\
& \geq
-\left(\frac{||-\Theta^3_A B_{sj}+e||_{L^2}}{(c^*-\lambda_0)\|B_{sj}\|_{L^{2}}^{2}}\right)^{2}\left|\langle e, B_{sj} \rangle\right| \\
- 2\frac{\langle h,v_*^{\lambda_3} \rangle}{||B_{sj}||_{L^2}^2}\left\langle e,h\right\rangle & \geq -  2\frac{||-\Theta^3_A B_{sj}+e||_{L^2}}{\|B_{sj}\|_{L^{2}}^{2}(c^*-\lambda_3)}\|e\|_{L^{2}} \geq -  2\frac{||-\Theta^3_A B_{sj}+e||_{L^2}}{\|B_{sj}\|_{L^{2}}^{2}(c^*-\lambda_0)}\|e\|_{L^{2}}
\end{align*}

Using Lemmas \ref{lemacotascestrella} and Corollary \ref{corollarylambda0}, we get

\begin{align*}
c^{*} - \lambda_{0} \geq 0.8526 - 0.4117 = 0.4409.
\end{align*}

Via Lemmas \ref{lemacotae} and \ref{lemacotasABC} we obtain

\begin{align*}
\left(\frac{||-\Theta^3_A B_{sj}+e||_{L^2}}{(c^*-\lambda_0)\|B_{sj}\|_{L^{2}}^{2}}\right)^{2}(\lambda^*-\lambda_0)\|B_{sj}\|_{L^{2}}^{2}
\geq \left(\frac{0.1534}{0.4409}\right)^{2}(0.3482-0.4117) \geq -0.01
\end{align*}

and similarly:

\begin{align*}
-\left(\frac{||-\Theta^3_A B_{sj}+e||_{L^2}}{(c^*-\lambda_0)\|B_{sj}\|_{L^{2}}^{2}}\right)^{2}\left|\langle e, B_{sj} \rangle\right|
\geq -\left(\frac{0.1534}{0.4409}\right)^{2} 0.0101 \geq -0.002
\end{align*}

Finally, the last term can be bounded by:

\begin{align*}
-  2\frac{||-\Theta^3_A B_{sj}+e||_{L^2}}{\|B_{sj}\|_{L^{2}}^{2}(c^*-\lambda_0)}\|e\|_{L^{2}} \geq (-2) \frac{0.1534}{0.4409} 0.0905 \geq -0.013
\end{align*}

Adding all the numbers we get the desired positivity result. This finishes the proof.

\end{proof}

By Lemma \ref{c} and Lax-Milgram theorem we find $g^3\in (B_*^3)^\perp$ such that
\begin{align*}
\langle (\Theta^3-\lambda_3)g^3, v\rangle =\left\langle \frac{1}{3}G^3, v\right\rangle
\end{align*}
for all $v\in (B_*^3)^\perp$ and then, there exists a real $\gamma \in \RR$ such that
\begin{align*}
(\Theta^3-\lambda_3)g^3=\frac{1}{3} G^3+\gamma B_*^3.
\end{align*}
But taking scalar product with $B_*^3$ we have that
\begin{align*}
||B_*^3||_{L^2}^2\gamma= \langle (\Theta^3-\lambda_3)g^3, B^3_* \rangle =\langle g^3, (\Theta^{3*}-\lambda_3)B^3_*\rangle=0.
\end{align*}
This last equality implies that $\gamma=0$ and therefore
\begin{align*}
(\Theta^3-\lambda_3)g^3= \frac{1}{3}G^3.
\end{align*}
However the equation
\begin{align}\label{notiene}
(\Theta^3-\lambda_3)g^3=B_*^3
\end{align}
does not have any solution in $L^2$. In order to check it let us assume that there exists $g^3\in L^2$ such that the equation \eqref{notiene} is satisfied. Multiplying \eqref{notiene} by $B^3_*$ and integrating yields
\begin{align*}
\langle (\Theta^3-\lambda_3)g^3, B_*^3 \rangle =||B_*^3||_{L^2}^2=\langle g^3,(\Theta^{3*}-\lambda_3)B_*^3\rangle=0,
\end{align*}
which is a contradiction.
\end{proof}

\item \textbf{$H^3-$regularity.} We again deal separately with two cases: in Lemma \ref{rmm1} we take $m>1$ and in Lemma \ref{rmi1} $m=1$.

\begin{lemma}\label{rmm1} The solution $g^{3m}\in L^2$ to the equation \eqref{gG} given by Lemma \ref{emm1} is actually in $H^3$ with the bound $$||g^{3m}||_{H^3}\leq \frac{C}{3m}||G^{3m}||_{H^3},$$ where $C$ does not depend on $m$.
\end{lemma}
\begin{proof}
For $m>1$ let us consider the equation \eqref{gG}. We split the proof in two steps: in the first one we will show that $g^{3m}\in H^3$ but its $H^3-$norm will depend on $m$; in the second one we will prove that the $H^3-$norm is actually independent of $m$.
\begin{enumerate}
\item Step 1.  Since $\Theta^{3m}g^{3m}-\lambda_3 g^{3m}=\frac{1}{3m}G^{3m}$ and $\frac{1}{3m}G^{3m}$ is $H^1$, we can take a derivative on both sides to obtain
\begin{align*}
(\tilde{I}(\rho)-\lambda_3)\pa_\rho g^{3m}(\rho)=-\frac{1}{3m}\pa_\rho G^{3m}(\rho)-\pa_\rho\tilde{I}(\rho)g^{3m}(\rho)-\pa_\rho \tilde{T}^{3m}g^{3m},
\end{align*}
where we remark that $\pa_\rho \tilde{T}^{3m}g^{3m}\in L^2$ since we know that $g^{3m}\in L^2$ and $\tilde{T}^{3m}\,:\, L^2\to H^1$. The problem here is that $||\tilde{T}^{3m}||_{L^2\to H^1}$ depends on $m$. In addition, since $\tilde{I}(\rho)\in C^3$ and $\tilde{I}-\lambda_3>0$, we have that $\pa_\rho g^{3m}$ is $L^2$ with norm bounded by a constant depending on $m$. Taking 2 derivatives in the equation \eqref{gG} we have that
\begin{align*}
(\tilde{I}(\rho)-\lambda_3)\pa^{2}_\rho g^{3m}(\rho)=\frac{1}{3m}\pa_\rho G^{3m}(\rho)-\sum_{j=1}^2\pa^{j)}_\rho\tilde{I}(\rho)\pa^{2-j)}_\rho g^{3m}-\pa^{2}_{\rho}\tilde{T}^{3m}g^{3m}(\rho).
\end{align*}
By Lemma \ref{Tderivadas} we know that
\begin{align*}
\pa_\rho \tilde{T}^{3m}g^{3m}(\rho)=&\tilde{T}^{3m}_1\pa_\rho g^{3m} (\rho)+\frac{1}{\rho}\inti \pa^{2}_\rho f(\rho') g^{3m}(\rho)(\rho')\frac{\rho'}{\rho}K^{3m}\left(\frac{1}{\gamma}\right)d\rho'.
\end{align*}
Thus we can write
\begin{align*}
\pa^{2}_\rho \tilde{T}^{3m}g^{3m}(\rho)=&\pa_\rho\tilde{T}^{3m}_1\pa_\rho g^{3m} (\rho)+\pa_\rho \frac{1}{\rho}\inti \pa^{2}_\rho f(\rho') g^{3m}(\rho)(\rho')\frac{\rho'}{\rho}K^{3m}\left(\frac{1}{\gamma}\right)d\rho'.
\end{align*}
Then, by Lemma \ref{lemacompacidad} we know that $\pa_\rho\tilde{T}^{3m}_1\pa_\rho g^{3m}\in H^3$ (with norm depending on $m$) and therefore $\pa^{2}_\rho \tilde{T}^{3m}g^{3m}(\rho)\in L^2$. Again, using that $\tilde{I}\in C^3$, $\tilde{I}(\rho)-\lambda_3>0$, and $g^{3m}\in H^2$ we have that $\pa^{2}_\rho g^{3m}\in L^2$ with $L^2-$norm depending on $m$. Finally since
\begin{align*}
(\tilde{I}(\rho)-\lambda_3)\pa^{3}_\rho g^{3m}=\frac{1}{3m}\pa^{3}_\rho G^{3m}(\rho)-\sum_{j=1}^3 \pa^{j)}_\rho \tilde{I}(\rho)\pa^{3-j)}_\rho g^{3m}(\rho)-\pa^{3}_\rho \tilde{T}^{3m}g^{3m}(\rho)
\end{align*}
and
\begin{align*}
\pa^{2}_\rho \tilde{T}^{3m}g^{3m}(\rho)= & T^{3m}_2\pa^{2}_\rho B(\rho)+\frac{1}{\rho}\sum_{j=1}^2\inti \pa^{j+1)}_\rho f(\rho')\pa^{2-j)}_\rho B(\rho')\left(\frac{\rho'}{\rho}\right)^2K^{3m}\left(\frac{\rho}{\rho'}\right)d\rho',
\end{align*}
and $\tilde{T}^{3m}_2$ is compact from $L^2\to H^1$, a similar argument yields that $\pa^{3}_\rho g^{3m}\in L^2$ with $L^2-$norm depending on $m$.

\item Step 2.
Now, taking one, two and three derivatives on the equation \eqref{gG} and applying Lemma \ref{Tderivadas}  we have that,
\begin{align}
(\Theta^{3m}-\lambda_3)\pa_\rho g^{3m}(\rho)=&\frac{1}{3m}\pa_\rho G^{3m}(\rho)\nonumber-\pa_\rho \tilde{I}(\rho) g^{3m}(\rho)-\frac{1}{\rho}\inti \pa^{2}_\rho f(\rho') g^{3m}(\rho')\frac{\rho'}{\rho}K^{3m}\left(\frac{1}{\gamma}\right)d\rho'\nonumber\\ &+\frac{1}{\rho}\inti f_\rho(\rho')\pa_\rho g^{3m}(\rho')\left(1-\frac{\rho'}{\rho}\right)K^{m}\left(\frac{1}{\gamma}\right)d\rho' \label{primera}\\
(\Theta^{3m}-\lambda_3)\pa_\rho^{2} g^{3m}(\rho)=&\frac{1}{3m}\pa^{2}_\rho G^{3m}(\rho)-\sum_{j=1}^2 \pa^{j)}_\rho \tilde{I}(\rho)\pa^{2-j)}_\rho g^{3m}(\rho)\nonumber\\
&-\frac{1}{\rho}\sum_{j=1}^2\inti \pa^{j+1)}_\rho f(\rho')\pa^{2-j)}_\rho g^{3m}(\rho')\left(\frac{\rho'}{\rho}\right)^2K^{3m}\left(\frac{\rho}{\rho'}\right)d\rho'\nonumber\\
&+\frac{1}{\rho}\inti f_\rho(\rho')\pa_\rho^2 g^{3m}(\rho')\left(1-\left(\frac{\rho'}{\rho}\right)^2\right)K^{m}\left(\frac{\rho}{\rho'}\right)d\rho'\label{segunda}\\
(\Theta^{3m}-\lambda_3)\pa_\rho^{3} g^{3m}(\rho)=&\frac{1}{3m}\pa^{3}_\rho G^{3m}(\rho)-\sum_{j=1}^3 \pa^{j)}_\rho \tilde{I}(\rho)\pa^{3-j)}_\rho g^{3m}(\rho)\nonumber\\
&-\frac{1}{\rho}\sum_{j=1}^3\inti \pa^{j+1)}_\rho f(\rho')\pa^{3-j)}_\rho g^{3m}(\rho')\left(\frac{\rho'}{\rho}\right)^3K^{3m}\left(\frac{\rho}{\rho'}\right)d\rho'\nonumber\\
& +\frac{1}{\rho}\inti f_\rho(\rho')\pa_\rho^3 g^{3m}(\rho')\left(1-\left(\frac{\rho'}{\rho}\right)^3\right)K^{m}\left(\frac{\rho}{\rho'}\right)d\rho'\label{tercera}.
\end{align}

Since $g^{3m}\in H^3$, by the step 1 above, the coercivity property in  Proposition \ref{thetaicoer} applies. We first apply it to \eqref{primera}, then to \eqref{segunda} and \eqref{tercera} yielding the bound
$$||g^{3m}||_{H^3} \leq \frac{C}{m}||G^{3m}||_{H^3}$$
with $C$ independent of $m$. The only problem comes from the last term on \eqref{primera}, \eqref{segunda} and \eqref{tercera}. In order to bound these terms we apply an integration by parts, Lemma \ref{commutator} and the same argument that we used in Lemma \ref{lemacompacidad} to control the $L^2-$norm.
\end{enumerate}
\end{proof}
\begin{lemma}\label{rmi1}
The solution, $g^3\in L^2$ of the equation $(\Theta^3-\lambda_3)g^3=\frac{1}{3}G^3$ with $G^{3}\in H^3\cap (B_*^3)^\perp$ given by Lemma \ref{emi1} is actually in $H^3$.
\end{lemma}
\begin{proof}
 We can show that $g^3\in H^3$ in the same way we did in the proof of the first part of Lemma \ref{rmm1}.
\end{proof}
\end{enumerate}
Then Proposition \ref{codimension} is already proven.
\end{proof}
\subsection{Step 4. The transversality property 4}
In this section we prove the transversality condition \eqref{condTransversality} of the C-R theorem, i.e., the fourth hypothesis. In order to do this is enough to show that
\begin{align}
\label{eqtransversal}
 (\Theta^{3} - \lambda_3)b^{3} = B^{3}
\end{align}
does not have a solution in $b^{3}\in L^2$.

Let's suppose that there exists $b^3\in L^2$ such that \eqref{eqtransversal} holds. Then, taking scalar product in $L^2$ with $B^3_*$, we have that
\begin{align*}
\langle B^3, B^3_*\rangle =0.
\end{align*}
This is impossible as it was proved in Lemma \ref{BBstar}.

This concludes the proof of Theorem \ref{main}.

\section*{Acknowledgements}

AC, DC and JGS were partially supported by the grant MTM2014-59488-P (Spain), MTM2017-89976-P (Spain) and ICMAT Severo Ochoa projects SEV-2011-0087 and SEV-2015-556. AC was partially supported by the Ram\'on y Cajal program RyC-2013-14317, ERC grant 307179-GFTIPFD and Europa Excelencia program ERC 2018-092824. DC was partially supported by a Minerva Distinguished
Visitorship at Princeton University, and by the ERC Advanced Grant 788250. JGS was partially supported by an AMS-Simons Travel Grant, by the NSF through Grant NSF DMS-1763356, and by the ERC Starting Grant 852741. We thank Princeton University for computing facilities. Part of this work was done while some the authors were visiting Princeton University or ICMAT, to which they are grateful for their support.

Supplementary material has been provided and can be found at \url{https://arxiv.org/e-print/1603.03325}. The downloaded file should be renamed to, for example, 1603.03325.tar.gz, as it is in an archived format.

\appendix

\section{Asymptotics}
\label{sectionasymptotics}

Part of the computer-assisted proof involves having to compute the kernels $T^{1}, T^{3}$ and $T^{6}$, which are given by elliptic integrals. As far as we know, we are not aware of any rigorous implementation of them in any library. One possibility could be to leave the (singular) integrals as they are and integrate over a domain of one more dimension. This would be very time consuming in terms of the computer performance. Instead, we do the laborious work of deriving explicit approximations (to order 0) of the kernels by hand, with computable error bounds of order greater than 1. Once we do this, whenever we have to code either $T^{1}, T^{3}$ or $T^{6}$, we substitute it by the explicit expression found here.

We start with the elliptic integral
\begin{align*}
 I & = \int_{-\pi}^{\pi} \frac{\cos(x)}{\sqrt{1+r^2-2r\cos(x)}}dx.
\end{align*}

Taking $\displaystyle u = \frac{1-r}{1+r}$:

\begin{align*}
I & = \frac{1}{1+r} \int_{-\pi}^{\pi} \frac{\cos(x)dx}{\sqrt{\sin^{2}\left(\frac{x}{2}\right) + u^{2}\cos^{2}\left(\frac{x}{2}\right)}}= \frac{4}{1+r}\int_{0}^{\frac{\pi}{2}} \frac{\cos(2y)dy}{\sqrt{\sin^{2}(y)+u^{2}\cos^{2}(y)}} \\& = \frac{4}{1+r}\int_{0}^{\frac{\pi}{2}} \frac{1-2\sin^{2}(y)}{\sqrt{\sin^{2}(y)+u^{2}\cos^{2}(y)}}dy  = \left\{
\begin{array}{ccc}
z & = & \tan(y) \\
\frac{dz}{1+z^{2}} & = & dy
\end{array}
\right\} = \frac{4}{1+r}\int_{0}^{\infty} \frac{1-z^{2}}{(1+z^{2})^{3/2}} \frac{dz}{\sqrt{z^{2}+u^{2}}}.
\end{align*}

We remark that $u$ will be close to zero. We need to derive the asymptotics in powers of $u$ as $u \rightarrow 0$ of

\begin{align*}
I & = \frac{4}{1+r} \int_{0}^{\infty} \frac{1-z^2}{(1+z^2)^{3/2}} \frac{dz}{\sqrt{z^2+u^2}} \\
& = \frac{4}{1+r} \int_{0}^{|u|} \frac{1-z^2}{(1+z^2)^{3/2}} \frac{dz}{\sqrt{z^2+u^2}}
 + \frac{4}{1+r} \int_{|u|}^{1} \frac{1-z^2}{(1+z^2)^{3/2}} \frac{dz}{\sqrt{z^2+u^2}}
 + \frac{4}{1+r} \int_{1}^{\infty} \frac{1-z^2}{(1+z^2)^{3/2}} \frac{dz}{\sqrt{z^2+u^2}} \\
& = I_1 + I_2 + I_3.
\end{align*}

We start with $I_1$:

\begin{align*}
I_1 & = \frac{4}{1+r} \int_{0}^{1} \frac{1-u^2 w^2}{(1+u^2 w^2)^{3/2}} \frac{dw}{\sqrt{1+w^2}}.
\end{align*}

We expand $(1+u^2 w^2)^{-3/2}$ as a power series around $w = 0$:

\begin{align*}
(1+u^2 w^2)^{-3/2} = 1 - \frac32 u^2w^2 + \frac{1}{2}\frac{15}{4}(u^2 w^2)^2E^{1}_{2},
\end{align*}

where $(1 + u^2)^{-7/2}  \leq E^{1}_2 \leq 1$. A naive integration and bounding then yields:

\begin{align*}
I_1 & = \frac{4}{1+r}\left(\int_{0}^{1} \frac{1-u^2 w^2}{\sqrt{1+w^2}} dw - \frac32 u^2 \int_{0}^{1} \frac{1-u^2 w^2}{\sqrt{1+w^2}} w^2 dw + \frac{15}{8} u^4 \tilde{E}^{1}_2\right) \\
& = \frac{4}{1+r}\left(\arcsinh(1) + u^2 \left(\frac{5}{4} (-\sqrt{2} + \arcsinh(1))\right) + u^4 \left(-\frac{3}{16}\left(\sqrt{2}-3\arcsinh(1)\right) + \frac{15}{8}\tilde{E}^{1}_2\right)\right),
\end{align*}

where

\begin{align*}
\frac{1}{\sqrt{2}}\frac{1-u^2}{(1+u^2)^{7/2}}\leq \tilde{E}^{1}_{2} \leq 1.
\end{align*}

Next, we proceed with $I_3$. We can write it as:

\begin{align*}
I_3 & = \frac{4}{1+r} \int_{0}^{1} \frac{w^2-1}{(w^2+1)^{3/2}} \frac{dw}{\sqrt{1+w^2u^2}}.
\end{align*}

Expanding in series $(1+w^2u^2)^{-1/2}$:

\begin{align*}
(1+u^2 w^2)^{-1/2} = 1 - \frac12 u^2w^2 + \frac{1}{2}\frac{3}{4}(u^2 w^2)^2E^{2}_{2},
\end{align*}

where $\frac{1}{(1+u^2)^{5/2}} \leq E^{2}_2 \leq 1$. A naive integration and bounding then yields:

\begin{align*}
I_3 & = \frac{4}{1+r}\left(\int_{0}^{1} \frac{w^2-1}{(w^2+1)^{3/2}}dw - \frac12 u^{2} \int_{0}^{1} w^2\frac{w^2-1}{(w^2+1)^{3/2}}dw + \frac{3}{8} u^4 \tilde{E}^{2}_{2}\right) \\
& = \frac{4}{1+r}\left(\arcsinh(1)-\sqrt{2} + \frac{1}{4} u^2\left(-3\sqrt{2}+5\arcsinh(1)\right) + \frac{3}{8} u^4 \tilde{E}^{2}_{2}\right),
\end{align*}

and

\begin{align*}
-1 \leq \tilde{E}^{2}_{2} \leq 0.
\end{align*}

Finally, we do the asymptotics for $I_2$. This is the most careful part. We start by writing

\begin{align*}
I_2 & = \frac{4}{1+r} \frac12 \int_{u^{2}}^{1} \frac{1}{w} \frac{1-w}{(1+w)^{3/2}} \frac{dw}{\sqrt{1+\frac{u^{2}}{w}}} \\
& = \frac{4}{1+r} \frac12 \int_{u^{2}}^{1} \frac{1}{w} \frac{1}{(1+w)^{3/2}} \frac{dw}{\sqrt{1+\frac{u^{2}}{w}}} - \frac{4}{1+r} \frac12 \int_{u^{2}}^{1} \frac{1}{w} \frac{w}{(1+w)^{3/2}} \frac{dw}{\sqrt{1+\frac{u^{2}}{w}}}\\ &
 = I_{21} + I_{22}.
\end{align*}

Next, we expand the term $\left(1+\frac{u^{2}}{w}\right)^{-1/2}$, but we do not truncate. Hence

\begin{align*}
I_{21} & = \frac{2}{1+r}\left(\sum_{k \geq 0}{-1/2 \choose k}\underbrace{ u^{2k} \int_{u^{2}}^{1} \frac{1}{w^{1+k}} \frac{dw}{(1+w)^{3/2}}}_{F_k}\right).
\end{align*}

Using the following integration by parts formula for $k > 0$:

\begin{align*}
\int_{u^{2}}^{1} w^{-k}(1+w)^{-3/2} dw & = \left.(-2)(1+w)^{-1/2}w^{-k}\right|_{w=u^{2}}^{w=1}
-2k\int_{u^{2}}^{1} w^{-k-1}(1+w)^{-1/2} dw \\
& = \left.(-2)(1+w)^{-1/2}w^{-k}\right|_{w=u^{2}}^{w=1}
-2k\int_{u^{2}}^{1} w^{-k-1}(1+w)^{-3/2} dw \\ &-2k\int_{u^{2}}^{1} w^{-k}(1+w)^{-3/2} dw,
\end{align*}

which implies:

\begin{align*}
\int_{u^{2}}^{1} w^{-k-1}(1+w)^{-3/2} dw & = \left.-\frac{1}{k}(1+w)^{-1/2}w^{-k}\right|_{w=u^{2}}^{w=1}
- \frac{2k+1}{2k} \int_{u^{2}}^{1}w^{-k}(1+w)^{-3/2} dw,
\end{align*}

yielding

\begin{align*}
F_{k} & = \frac{1}{k}\left(\frac{1}{\sqrt{1+u^{2}}} - \frac{u^{2k}}{\sqrt{2}}\right) - u^{2}\left(\frac{2k+1}{2k}\right)F_{k-1}.
\end{align*}

We now get back to $I_{21}$. We have to compute

\begin{align*}
I_{21} & = \frac{2}{1+r}\left(\sum_{k \geq 0} {-1/2 \choose k} F_{k}\right) =\\ & \frac{2}{1+r}\left(F_{0} + \sum_{k \geq 1}{-1/2 \choose k} \frac{1}{k}\left(\frac{1}{\sqrt{1+u^{2}}} - \frac{u^{2k}}{\sqrt{2}}\right) - u^{2} \sum_{k\geq 1}{-1/2 \choose k} \left(\frac{2k+1}{2k}\right)F_{k-1}\right).
\end{align*}

We calculate the following explicit numbers:

\begin{align*}
F_0 & = \sqrt{2} - \frac{2}{\sqrt{1+u^{2}}} - 2\arcsinh(1) + 2\arcsinh\left(\frac{1}{|u|}\right). \\
\sum_{k \geq 1}{-1/2 \choose k} \frac{1}{k}\left(\frac{1}{\sqrt{1+u^{2}}}-\frac{u^{2k}}{\sqrt{2}}\right) & = \frac{-2\arcsinh(1)+\log(4)}{\sqrt{1+u^{2}}} + \sqrt{2}\log\left(\frac{1}{2}(1+\sqrt{1+u^{2}})\right).
\end{align*}

We treat the rightmost sum as an error. Using the fact that the terms are alternating (since $\displaystyle {-1/2 \choose k}$ is alternating and the other factors are positive), and that, for $k \geq 1$:

\begin{align*}
 F_{k-1} = \frac{1}{u^{2}} \int_{u^{2}}^{1} \left(\frac{u^{2}}{w}\right)^{k} \frac{dw}{(1+w)^{3/2}} \geq \frac{1}{u^{2}} \int_{u^{2}}^{1} \left(\frac{u^{2}}{w}\right)^{k+1} \frac{dw}{(1+w)^{3/2}} = F_{k} \quad \forall \quad 0 < u^{2} < 1,
\end{align*}

\begin{align*}
 \left| {-1/2 \choose k} \right| \frac{2k+1}{2k} = \frac{2 (1 + k)^2}{k (3 + 2 k)} \left| {-1/2 \choose k+1} \right| \frac{2(k+1)+1}{2(k+1)}
\geq \left| {-1/2 \choose k+1} \right| \frac{2(k+1)+1}{2(k+1)}
\end{align*}

we can bound the absolute value of the sum by the absolute value of its first term, yielding:

\begin{align*}
 \left| u^{2} \sum_{k\geq 1}{-1/2 \choose k} \left(\frac{2k+1}{2k}\right)F_{k-1}\right| \leq \frac{3}{4} u^{2}|F_{0}|.
\end{align*}

%

We move on to $I_{22}$. We can write it as:

\begin{align*}
I_{22} & = \frac{2}{1+r}\left(-u^{2}F_{-1} - u^{2}\sum_{k \geq 1} {-1/2 \choose k} F_{k-1}\right).
\end{align*}

Together with the explicit calculation

\begin{align*}
-u^{2}F_{-1} & = \sqrt{2} - \frac{2}{\sqrt{1+u^{2}}},
\end{align*}

and the bound on the last term by the same reason as above:

\begin{align*}
\left|u^{2}\sum_{k \geq 1} {-1/2 \choose k} F_{k-1}\right| \leq \frac12 u^{2}|F_{0}|
\end{align*}

we can conclude, after gathering all the contributions, that $I$ splits in the following way:




\begin{align*}
 I & = I_{ho,ns} + I_{ho,s} + I_{e,s} + I_{e,ns},
\end{align*}

where

\begin{align*}
 I_{ho,ns} & = \frac{4}{1+r}\left(\arcsinh(1) + u^2 \left(\frac{5}{4} (-\sqrt{2} + \arcsinh(1))\right) + u^4 \left(-\frac{3}{16}\left(\sqrt{2}-3\arcsinh(1)\right) \right)\right)\\
& + \frac{4}{1+r}\left(\arcsinh(1)-\sqrt{2} + \frac{1}{4} u^2\left(-3\sqrt{2}+5\arcsinh(1)\right)\right) \\
& + \frac{2}{1+r}\left(\sqrt{2} - \frac{2}{\sqrt{1+u^{2}}} - 2\arcsinh(1) \right) \\
& + \frac{2}{1+r}\left( \frac{-2\arcsinh(1)+\log(4)}{\sqrt{1+u^{2}}} + \sqrt{2}\log\left(\frac{1}{2}(1+\sqrt{1+u^{2}})\right) \right) \\
& + \frac{2}{1+r}\left(\sqrt{2} - \frac{2}{\sqrt{1+u^{2}}} \right) \\
& = \frac{4}{1+r}\left(\frac{1}{\sqrt{2}}\log\left(\frac{1}{2}(1+\sqrt{1+u^{2}})\right) + \frac{1}{\sqrt{1+u^{2}}}\left(-2+\left(-1+\sqrt{1+u^{2}}\right)\arcsinh(1)+\log(2)\right)\right) \\
& + \frac{4}{1+r}\left(u^2 \left(-2 \sqrt{2} + \frac{5}{2}\arcsinh(1)\right) - \frac{3}{16} u^4 (\sqrt{2} - 3 \arcsinh(1))  \right) \\
I_{ho,s} & = \frac{4}{1+r} \arcsinh\left(\frac{1}{|u|}\right)
\end{align*}

and

\begin{align*}
|I_{e,ns}| & \leq \frac{4}{1+r}\frac{15}{8} u^{4} + \frac{4}{1+r}\frac{3}{8} u^4  + \frac{2}{1+r}\frac{5}{4}u^{2}\left|\sqrt{2} - \frac{2}{\sqrt{1+u^{2}}} - 2\arcsinh(1)\right| \\
& = \frac{4}{1+r}\frac{9}{4} u^{4}  + \frac{4}{1+r}\frac{5}{8}u^{2}\left|\sqrt{2} - \frac{2}{\sqrt{1+u^{2}}} - 2\arcsinh(1)\right| \\
 |I_{e,s}| & \leq \frac{4}{1+r}\frac{5}{4}u^{2} \arcsinh\left(\frac{1}{|u|}\right).
\end{align*}

Note that in the computer implementation any error term $I_{e}$ will be implemented as the interval $[-M,M]$ whenever $|I_{e}| \leq M$.

We do the same for the other elliptic integral:

\begin{align*}
 J = \int_{-\pi}^{\pi} \frac{\cos(3x)}{\sqrt{1+s^2-2s\cos(x)}}dx.
\end{align*}

Taking $\displaystyle u = \frac{1-r}{1+r}$:

\begin{align*}
J & = \frac{1}{1+r} \int_{-\pi}^{\pi} \frac{\cos(3x)dx}{\sqrt{\sin^{2}\left(\frac{x}{2}\right) + u^{2}\cos^{2}\left(\frac{x}{2}\right)}} = \frac{4}{1+r}\int_{0}^{\frac{\pi}{2}} \frac{\cos(6y)dy}{\sqrt{\sin^{2}(y)+u^{2}\cos^{2}(y)}}  \\
& = \left\{
\begin{array}{ccc}
z & = & \tan(y) \\
\frac{dz}{1+z^{2}} & = & dy
\end{array}
\right\} = \frac{4}{1+r}\int_{0}^{\infty} \frac{1-15z^2+15z^4-z^6}{(1+z^{2})^{7/2}} \frac{dz}{\sqrt{z^{2}+u^{2}}}.
\end{align*}

We repeat the splitting that we did for $I$, this time for $J$:

\begin{align*}
J & = \frac{4}{1+r} \int_{0}^{\infty} \frac{1-15z^2+15z^4-z^6}{(1+z^2)^{7/2}} \frac{dz}{\sqrt{z^2+u^2}} \\
& = \frac{4}{1+r} \int_{0}^{|u|} \frac{1-15z^2+15z^4-z^6}{(1+z^2)^{7/2}} \frac{dz}{\sqrt{z^2+u^2}}
 + \frac{4}{1+r} \int_{|u|}^{1} \frac{1-15z^2+15z^4-z^6}{(1+z^2)^{7/2}} \frac{dz}{\sqrt{z^2+u^2}} \\
 & + \frac{4}{1+r} \int_{1}^{\infty} \frac{1-15z^2+15z^4-z^6}{(1+z^2)^{7/2}} \frac{dz}{\sqrt{z^2+u^2}}
= J_1 + J_2 + J_3.
\end{align*}

We start with $J_1$:

\begin{align*}
J_1 & = \frac{4}{1+r} \int_{0}^{1} \frac{1-15 u^2 w^2+15 u^4 w^4- u^6w^6}{(1+u^2 w^2)^{7/2}} \frac{dw}{\sqrt{1+w^2}}.
\end{align*}

We expand $(1+u^2 w^2)^{-7/2}$ as a power series around $w = 0$:

\begin{align*}
(1+u^2 w^2)^{-7/2} = 1 - \frac{7}{2} u^2w^2 F^{1}_{2},
\end{align*}

where $(1 + u^2)^{-9/2}  \leq F^{1}_2 \leq 1$. A naive integration and bounding then yields:
\begin{align*}
J_1 & = \frac{4}{1+r}\left(\int_{0}^{1} \frac{1-15 u^2 w^2+15 u^4 w^4- u^6w^6}{\sqrt{1+w^2}} dw - \frac{7}{2} u^2 \tilde{F}^{1}_2\right) \\
& = \frac{4}{1+r}\left(\arcsinh(1) - u^2 \left(\frac{15}{2} (\sqrt{2} - \arcsinh(1))\right) - u^4 \left(\frac{15}{8}\left(\sqrt{2}-3\arcsinh(1)\right)\right)\right. \\
& \left. - \frac{1}{48}u^{6}\left(13 \sqrt{2} - 15 \arcsinh(1)\right) - \frac{7}{2}u^{2}\tilde{F}^{1}_2\right),
\end{align*}


where

\begin{align*}
0 \leq \tilde{F}^{1}_{2} \leq 1,
\end{align*}

and we have used that $|u|$ is small enough to guarantee the positiveness of the integrand.

Next, we proceed with $J_3$. We can write it as:

\begin{align*}
J_3 & = \frac{4}{1+r} \int_{0}^{1} \frac{w^6-15w^4+15w^2-1}{(w^2+1)^{7/2}} \frac{dw}{\sqrt{1+w^2u^2}}.
\end{align*}

Expanding in series $(1+w^2u^2)^{-1/2}$:

\begin{align*}
(1+u^2 w^2)^{-1/2} = 1 - \frac12 u^2w^2 F^{2}_{2},
\end{align*}

where $\frac{1}{(1+u^2)^{3/2}} \leq F^{2}_2 \leq 1$. We use the fact that

\begin{align*}
\max_{w \in [0,1]} |w^6-15w^4+15w^2-1| \leq 176 - 80 \sqrt{5} < 3,
\end{align*}

to obtain, via integration and bounding:

\begin{align*}
J_3 & = \frac{4}{1+r}\left(\int_{0}^{1} \frac{w^6-15w^4+15w^2-1}{(w^2+1)^{7/2}}dw - \frac{1}{2} u^2 \tilde{F}^{2}_{2}\right) \\
& = \frac{4}{1+r}\left(\arcsinh(1)-\frac{7}{15}\sqrt{2} - \frac{1}{2} u^2 \tilde{F}^{2}_{2}\right),
\end{align*}

and

\begin{align*}
 |\tilde{F}^{2}_{2}| \leq 3.
\end{align*}

We finally move on to $J_2$. As before, we have the following formula obtained by integration by parts:

\begin{align*}
\int_{u^{2}}^{1} w^{-k-1}(1+w)^{-7/2}dw = \left.-\frac{1}{k}w^{-k}(1+w)^{-5/2}\right|_{w=u^{2}}^{w=1} - \left(\frac{2k+5}{2k}\right)\int_{u^{2}}^{1} w^{-k}(1+w)^{-7/2}dw.
\end{align*}

Defining now

\begin{align*}
G_{k} & = u^{2k} \int_{u^{2}}^{1} \frac{1}{w^{1+k}} \frac{dw}{(1+w)^{7/2}}
\end{align*}

we arrive to

\begin{align}
\label{formulaJ}
G_{k} & = \frac{1}{k}\left(\frac{1}{(1+u^{2})^{5/2}} - \frac{u^{2k}}{\sqrt{32}}\right) - u^{2}\left(\frac{2k+5}{2k}\right)G_{k-1}.
\end{align}

We can write

\begin{align*}
 J_2 & = \frac{2}{1+r}\left(\sum_{k\geq 0}  {-1/2 \choose k} G_{k} - 15 u^{2} \sum_{k\geq 0}  {-1/2 \choose k} G_{k-1} + 15 u^{4}\sum_{k\geq 0}  {-1/2 \choose k} G_{k-2} - u^{6} \sum_{k\geq 0}  {-1/2 \choose k} G_{k-3}\right) \\
& = J_{21} + J_{22} + J_{23} + J_{24}.
\end{align*}

The last three terms are easier. We deal with them first.

\begin{align*}
 J_{24} & = \frac{2}{1+r}\left(-u^{6}G_{-3} - u^{6}\sum_{k\geq 1}  {-1/2 \choose k} G_{k-3}\right).
\end{align*}

The first term can be explicitly calculated and it amounts to

\begin{align*}
 -u^{6}G_{-3} & = -\frac{1}{60}\frac{64 + 160u^{2} + 120 u^{4} - 43 \sqrt{2}(1+u^{2})^{5/2}}{(1+u^{2})^{5/2}}.
\end{align*}

The last series is alternating and convergent and we can get the following bound:

\begin{align*}
 \left| u^{6}\sum_{k\geq 1}  {-1/2 \choose k} G_{k-3}\right| \leq u^{6}\frac{1}{2}G_{-2}
= \frac{1}{120}u^{2}\left|-7\sqrt{2} + \frac{40}{(1+u^{2})^{3/2}} - \frac{24}{(1+u^{2})^{5/2}}\right|.
\end{align*}

We move on to $J_{23}$. By the same reasoning:

\begin{align*}
 J_{23} & = \frac{2}{1+r}\left(15u^{4}G_{-2} + 15 u^{4}\sum_{k\geq 1}  {-1/2 \choose k} G_{k-2}\right) \\
& = \frac{2}{1+r}\left(-\frac{7}{2\sqrt{2}} + \frac{10}{(1+u^{2})^{3/2}} -\frac{6}{(1+u^{2})^{5/2}}  + 15 u^{4}\sum_{k\geq 1}  {-1/2 \choose k} G_{k-2}\right), \\
\end{align*}

and

\begin{align*}
 \left|15 u^{4}\sum_{k\geq 1}  {-1/2 \choose k} G_{k-2}\right| \leq u^{4}\frac{15}{2}G_{-1}
= \frac{3}{8}u^{2}\left|-\sqrt{2} + \frac{8}{(1+u^{2})^{5/2}}\right|.
\end{align*}

The same applies to $J_{22}$. We have:

\begin{align*}
 J_{22} & = \frac{2}{1+r}\left(-15u^{2}G_{-1} - 15 u^{2}\sum_{k\geq 1}  {-1/2 \choose k} G_{k-1}\right) \\
& = \frac{2}{1+r}\left(\frac{3}{4}\sqrt{2} -\frac{6}{(1+u^{2})^{5/2}}  - 15 u^{2}\sum_{k\geq 1}  {-1/2 \choose k} G_{k-1}\right) \\
\end{align*}

and

\begin{align*}
 &\left|-15 u^{2}\sum_{k\geq 1}  {-1/2 \choose k} G_{k-1}\right|  \leq u^{2}\frac{15}{2}G_{0} \\
& = \frac{1}{8}u^{2}\left|73\sqrt{2} - \frac{24}{(1+u^{2})^{5/2}} - \frac{40}{(1+u^{2})^{3/2}} - \frac{120}{(1+u^{2})^{1/2}} - 120 \arcsinh(1) + 120\arcsinh\left(\frac{1}{|u|}\right)\right|.
\end{align*}

In order to compute $J_{21}$ we use formula \eqref{formulaJ}:

\begin{align*}
\sum_{k\geq 0}  {-1/2 \choose k} G_{k} & = G_{0} + \sum_{k\geq 1}  {-1/2 \choose k} G_{k} \\
& = G_{0} + \sum_{k\geq 1}  {-1/2 \choose k} \frac{1}{k}\left(\frac{1}{(1+u^{2})^{5/2}} - \frac{u^{2k}}{\sqrt{32}}\right) -  u^{2} \sum_{k\geq 1}  {-1/2 \choose k}\left(\frac{2k+5}{2k}\right)G_{k-1}\\
& = \frac{73}{30\sqrt{2}} - \frac{2}{\sqrt{1+u^{2}}} - \frac{2}{3}\frac{1}{(1+u^2)^{3/2}} - \frac{2}{5} \frac{1}{(1+u^2)^{5/2}} - 2 \arcsinh(1) + 2 \arcsinh\left(\frac{1}{|u|}\right) \\
& + \frac{1}{8}\left(\frac{16(\log(2) - \arcsinh(1))}{(1+u^2)^{5/2}} - \sqrt{2}\log(4) + 2\sqrt{2}\log(1+\sqrt{1+u^2})\right) \\
& -  u^{2} \sum_{k\geq 1}  {-1/2 \choose k}\left(\frac{2k+5}{2k}\right)G_{k-1}.
\end{align*}

The last sum can be bounded as usual by

\begin{align*}
& \left|u^{2} \sum_{k\geq 1}  {-1/2 \choose k}\left(\frac{2k+5}{2k}\right)G_{k-1}\right| \leq \frac{1}{2}\frac{7}{2}u^{2}G_{0} \\
& = \frac{7}{4}u^{2}\left|\frac{73}{30\sqrt{2}} - \frac{2}{\sqrt{1+u^{2}}} - \frac{2}{3}\frac{1}{(1+u^2)^{3/2}} - \frac{2}{5} \frac{1}{(1+u^2)^{5/2}} - 2 \arcsinh(1) + 2 \arcsinh\left(\frac{1}{|u|}\right)\right|.
\end{align*}

We finally add everything together to write $J$ as

\begin{align*}
J & = J_{ho,ns} + J_{ho,s} + J_{e,ns} + J_{e,s}
\end{align*}

where




\begin{align*}
J_{ho,ns}
& =
\frac{4}{1+r}\left(\arcsinh(1) - u^2 \left(\frac{15}{2} (\sqrt{2} - \arcsinh(1))\right)\right.\\ & \quad\qquad\qquad \left. - u^4 \left(\frac{15}{8}\left(\sqrt{2}-3\arcsinh(1)\right)\right)- \frac{1}{48}u^{6}\left(13 \sqrt{2} - 15 \arcsinh(1)\right)\right) \\
& +\frac{4}{1+r}\left(\arcsinh(1)-\frac{7}{15}\sqrt{2} \right) \\
& + \frac{2}{1+r}\left(-\frac{1}{60}\frac{64 + 160 u^2 + 120 u^4 - 43 \sqrt{2}(1 + u^2)^{5/2}}{(1+u^2)^{5/2}}\right) \\
& + \frac{2}{1+r}\left(-\frac{7}{2\sqrt{2}} - \frac{6}{(1+u^2)^{5/2}} + \frac{10}{(1+u^2)^{3/2}}\right) \\
& + \frac{2}{1+r}\left(\frac{3}{4}\left(\sqrt{2} - \frac{8}{(1+u^2)^{5/2}}\right)\right) \\
& + \frac{2}{1+r}\left(\frac{1}{8}\left(\frac{16(\log(2) - \arcsinh(1))}{(1+u^2)^{5/2}} - \sqrt{2}\log(4) + 2\sqrt{2}\log(1+\sqrt{1+u^2})\right)\right) \\
& + \frac{2}{1+r}\left(\frac{73}{30\sqrt{2}} - \frac{2}{\sqrt{1+u^{2}}} - \frac{2}{3}\frac{1}{(1+u^2)^{3/2}} - \frac{2}{5} \frac{1}{(1+u^2)^{5/2}} - 2 \arcsinh(1) \right) \\
& = \frac{4}{1+r}\left(-\frac{46}{15(1+u^2)^{5/2}} - \frac{1}{48}u^{6}(13\sqrt{2}-15\arcsinh(1)) + \arcsinh(1)\right) \\
& + \frac{4}{1+r}\left(-\frac{\arcsinh(1)}{(1+u^2)^{5/2}} + u^{4}\left(-\frac{15}{4\sqrt{2}} - \frac{2}{(1+u^2)^{5/2}} + \frac{45}{8}\arcsinh(1)\right)\right) \\
& + \frac{4}{1+r}\left(\frac{1}{6}u^{2}\left(-45\sqrt{2} + \frac{8}{(1+u^{2})^{5/2}} + 45 \arcsinh(1)\right)\right) \\
& + \frac{4}{1+r}\left(\frac{\log(2)}{(1+u^2)^{5/2}} + \frac{\log\left(\frac{1}{2}\left(1+\sqrt{1+u^2}\right)\right)}{4\sqrt{2}}\right) \\
J_{ho,s} & = \frac{4}{1+r} \arcsinh\left(\frac{1}{|u|}\right)
\end{align*}

and


\begin{align*}
|J_{e,ns}| & \leq \frac{4}{1+r}\left(\frac{7}{2}u^2 + \frac{3}{2} u^2\right)\\
& + \frac{4}{1+r}\left(\frac{1}{240}u^2\left|-7 \sqrt{2} - \frac{24}{(1+u^2)^{5/2}} + \frac{40}{(1+u^2)^{3/2}}\right|\right) \\
& + \frac{4}{1+r}\left(\frac{3}{16}u^2\left|-\sqrt{2} + \frac{8}{(1+u^2)^{5/2}}\right|\right) \\
& + \frac{4}{1+r}\left(\frac{37}{8}u^2\left|\frac{73}{30\sqrt{2}} - \frac{2}{\sqrt{1+u^{2}}} - \frac{2}{3}\frac{1}{(1+u^2)^{3/2}} - \frac{2}{5} \frac{1}{(1+u^2)^{5/2}} - 2 \arcsinh(1)\right|\right) \\
|J_{e,s}| & \leq \frac{4}{1+r}\frac{37}{4}u^{2} \arcsinh\left(\frac{1}{|u|}\right).
\end{align*}

Finally, we do $K_6$, corresponding to:

\begin{align*}
 L = \int_{-\pi}^{\pi} \frac{\cos(6x)}{\sqrt{1+s^2-2s\cos(x)}}dx.
\end{align*}

Taking $\displaystyle u = \frac{1-r}{1+r}$:

\begin{align*}
L & = \frac{1}{1+r} \int_{-\pi}^{\pi} \frac{\cos(6x)dx}{\sqrt{\sin^{2}\left(\frac{x}{2}\right) + u^{2}\cos^{2}\left(\frac{x}{2}\right)}} = \frac{4}{1+r}\int_{0}^{\frac{\pi}{2}} \frac{\cos(12y)dy}{\sqrt{\sin^{2}(y)+u^{2}\cos^{2}(y)}}  \\
& = \left\{
\begin{array}{ccc}
z & = & \tan(y) \\
\frac{dz}{1+z^{2}} & = & dy
\end{array}
\right\} = \frac{4}{1+r}\int_{0}^{\infty} \frac{1 - 66 z^2 + 495 z^4 - 924 z^6 + 495 z^8 - 66 z^{10} + z^{12}}{(1+z^{2})^{13/2}} \frac{dz}{\sqrt{z^{2}+u^{2}}}.
\end{align*}

We repeat the splitting again:

\begin{align*}
L & = \frac{4}{1+r} \int_{0}^{\infty} \frac{1 - 66 z^2 + 495 z^4 - 924 z^6 + 495 z^8 - 66 z^{10} + z^{12}}{(1+z^2)^{13/2}} \frac{dz}{\sqrt{z^2+u^2}} \\
& = \frac{4}{1+r} \int_{0}^{|u|} \frac{1 - 66 z^2 + 495 z^4 - 924 z^6 + 495 z^8 - 66 z^{10} + z^{12}}{(1+z^2)^{13/2}} \frac{dz}{\sqrt{z^2+u^2}} \\
& + \frac{4}{1+r} \int_{|u|}^{1} \frac{1 - 66 z^2 + 495 z^4 - 924 z^6 + 495 z^8 - 66 z^{10} + z^{12}}{(1+z^2)^{13/2}} \frac{dz}{\sqrt{z^2+u^2}} \\
 & + \frac{4}{1+r} \int_{1}^{\infty} \frac{1 - 66 z^2 + 495 z^4 - 924 z^6 + 495 z^8 - 66 z^{10} + z^{12}}{(1+z^2)^{13/2}} \frac{dz}{\sqrt{z^2+u^2}}
= L_1 + L_2 + L_3.
\end{align*}

We start with $L_1$:

\begin{align*}
L_1 & = \frac{4}{1+r} \int_{0}^{1} \frac{1 - 66 u^2w^2 + 495 u^4w^4 - 924 u^6w^6 + 495 u^8w^8 - 66 u^{10}w^{10} + u^{12}w^{12}}{(1+u^2 w^2)^{13/2}} \frac{dw}{\sqrt{1+w^2}}.
\end{align*}

We expand $(1+u^2 w^2)^{-13/2}$ as a power series around $w = 0$:

\begin{align*}
(1+u^2 w^2)^{-13/2} = 1 - \frac{13}{2} u^2w^2 G^{1}_{2},
\end{align*}

where $(1 + u^2)^{-15/2}  \leq F^{1}_2 \leq 1$. A naive integration and bounding then yields:
\begin{align*}
L_1 & = \frac{4}{1+r}\left(\int_{0}^{1} \frac{1 - 66 u^2w^2 + 495 u^4w^4 - 924 u^6w^6 + 495 u^8w^8 - 66 u^{10}w^{10} + u^{12}w^{12}}{\sqrt{1+w^2}} dw - \frac{13}{2} u^2 \tilde{G}^{1}_2\right) \\
& = \frac{4}{1+r}\left(\arcsinh(1) - 33 u^2  (\sqrt{2} - \arcsinh(1)) - u^4 \left(\frac{495}{8}\left(\sqrt{2}-3\arcsinh(1)\right)\right)\right. \\
& \left. - \frac{77}{4}u^{6}\left(13 \sqrt{2} - 15 \arcsinh(1)\right) -\frac{165}{128} u^8 (43 \sqrt{2} - 105 \arcsinh(1))  - \frac{33}{640} u^{10} (257 \sqrt{2} - 315 \arcsinh(1)) \right. \\
& \left.-\frac{7}{15360}u^{12}(221 \sqrt{2} - 495 \arcsinh(1)) -  \frac{13}{2}u^{2}\tilde{G}^{1}_2\right),
\end{align*}

where

\begin{align*}
0 \leq \tilde{G}^{1}_{2} \leq 1,
\end{align*}

and we have used that $|u|$ is small enough to guarantee the positiveness of the integrand.

Next, we proceed with $L_3$. We can write it as:

\begin{align*}
L_3 & = \frac{4}{1+r} \int_{0}^{1} \frac{1 - 66 w^2 + 495 w^4 - 924w^6 + 495 w^8 - 66 w^{10} + w^{12}}{(w^2+1)^{13/2}} \frac{dw}{\sqrt{1+w^2u^2}}.
\end{align*}

Expanding in series $(1+w^2u^2)^{-1/2}$:

\begin{align*}
(1+u^2 w^2)^{-1/2} = 1 - \frac12 u^2w^2 G^{2}_{2},
\end{align*}

where $\frac{1}{(1+u^2)^{3/2}} \leq G^{2}_2 \leq 1$. We use the fact that

\begin{align*}
\max_{w \in [0,1]} |1 - 66 w^2 + 495 w^4 - 924 w^6 + 495 w^8 - 66 w^{10} + w^{12}| =
|w(1)| = 64,
\end{align*}

to obtain, via integration and bounding:

\begin{align*}
L_3 & = \frac{4}{1+r}\left(\int_{0}^{1} \frac{1 - 66 w^2 + 495 w^4 - 924 w^6 + 495 w^8 - 66 w^{10} + w^{12}}{(w^2+1)^{13/2}}dw - \frac{1}{2} u^2 \tilde{G}^{2}_{2}\right) \\
& = \frac{4}{1+r}\left(\arcsinh(1)-\frac{2182}{3465}\sqrt{2} - \frac{1}{2} u^2 \tilde{G}^{2}_{2}\right),
\end{align*}

and

\begin{align*}
 |\tilde{G}^{2}_{2}| \leq 64.
\end{align*}

We finally move on to $L_2$. As before, we have the following formula obtained by integration by parts:

\begin{align*}
\int_{u^{2}}^{1} w^{-k-1}(1+w)^{-13/2}dw = \left.-\frac{1}{k}w^{-k}(1+w)^{-11/2}\right|_{w=u^{2}}^{w=1} - \left(\frac{2k+11}{2k}\right)\int_{u^{2}}^{1} w^{-k}(1+w)^{-11/2}dw.
\end{align*}

Defining now

\begin{align*}
H_{k} & = u^{2k} \int_{u^{2}}^{1} \frac{1}{w^{1+k}} \frac{dw}{(1+w)^{13/2}}
\end{align*}

we arrive to

\begin{align}
\label{formulaL}
H_{k} & = \frac{1}{k}\left(\frac{1}{(1+u^{2})^{11/2}} - \frac{u^{2k}}{\sqrt{2^{11}}}\right) - u^{2}\left(\frac{2k+11}{2k}\right)H_{k-1}.
\end{align}

We can write

\begin{align*}
 L_2 & = \frac{2}{1+r}\left(\sum_{k\geq 0}  {-1/2 \choose k} H_{k} - 66 u^{2} \sum_{k\geq 0}  {-1/2 \choose k} H_{k-1} + 495 u^{4}\sum_{k\geq 0}  {-1/2 \choose k} H_{k-2} - 924 u^{6} \sum_{k\geq 0}  {-1/2 \choose k} H_{k-3}\right) \\
 & + \frac{2}{1+r}\left(495 u^{8}\sum_{k\geq 0}  {-1/2 \choose k} H_{k-4} - 66 u^{10} \sum_{k\geq 0}  {-1/2 \choose k} H_{k-5} + u^{12}\sum_{k\geq 0}  {-1/2 \choose k} H_{k-6}\right) \\
& = L_{21} + L_{22} + L_{23} + L_{24} + L_{25} + L_{26} + L_{27}.
\end{align*}

The last six terms are easier. We deal with them first.

\begin{align*}
 L_{27} & = \frac{2}{1+r}\left(u^{12}H_{-6} + u^{12}\sum_{k\geq 1}  {-1/2 \choose k} H_{k-6}\right).
\end{align*}

The first term can be explicitly calculated and it amounts to

\begin{align*}
 u^{12}H_{-6} & = \frac{1}{22176}\frac{16384 + 90112 u^2 + 202752 u^4 + 236544 u^6 + 147840 u^8 + 44352 u^{10} - 11531 \sqrt{2} (1 + u^2)^{11/2}}{(1+u^{2})^{11/2}}.
\end{align*}

The last series is alternating and convergent and we can get the following bound:

\begin{align*}
 \left| u^{12}\sum_{k\geq 1}  {-1/2 \choose k} H_{k-6}\right|  \leq u^{12}\frac{1}{2}H_{-5}
 & = \frac{1}{221760(1+u^2)^{11/2}}u^{2} \\
& \times \left|8192 + 45056 u^2 + 101376 u^4 + 118272 u^6 + 73920 u^8 - 5419 \sqrt{2}(1 + u^2)^{11/2}\right|.
\end{align*}

We move on to $L_{26}$. By the same reasoning:

\begin{align*}
 L_{26} & = \frac{2}{1+r}\left(-66u^{10}H_{-5} - 66 u^{10}\sum_{k\geq 1}  {-1/2 \choose k} H_{k-5}\right) \\
& = \frac{2}{1+r}\left(-\frac{8192 + 45056 u^2 + 101376 u^4 + 118272 u^6 + 73920 u^8 - 5419 \sqrt{2} (1 + u^2)^{11/2}}{1680(1+u^2)^{11/2}}\right) \\
& + \frac{2}{1+r}\left(- 66 u^{10}\sum_{k\geq 1}  {-1/2 \choose k} H_{k-5}\right),
\end{align*}

\begin{align*}
 \left|66 u^{10}\sum_{k\geq 1}  {-1/2 \choose k} H_{k-5}\right| \leq 33u^{10}H_{-4}
= \frac{1}{1120}u^{2}\left|\frac{1024 + 5632 u^2 + 12672 u^4 + 14784 u^6 - 533 \sqrt{2} (1 + u^2)^{11/2}}{(1+u^2)^{11/2}}\right|.
\end{align*}

The same applies to $L_{25}$. We have:

\begin{align*}
 L_{25} & = \frac{2}{1+r}\left(495 u^{8} H_{-4} + 495 u^{8}\sum_{k\geq 1}  {-1/2 \choose k} H_{k-4}\right) \\
& = \frac{2}{1+r}\left(\frac{3(1024 + 5632 u^2 + 12672 u^4 + 14784 u^6 - 533 \sqrt{2} (1 + u^2)^{11/2})}{224(1+u^{2})^{11/2}}  + 495 u^{8}\sum_{k\geq 1}  {-1/2 \choose k} H_{k-4}\right) \\
\end{align*}

and

\begin{align*}
 &\left|495 u^{8}\sum_{k\geq 1}  {-1/2 \choose k} H_{k-4}\right|  \leq u^{8}\frac{495}{2}H_{-3} \\
& = \frac{5}{448}u^{2}\left|\frac{512 + 2816 u^2 + 6336 u^4 - 151 \sqrt{2} (1 + u^2)^{11/2}}{(1+u^2)^{11/2}}\right|.
\end{align*}

The next term is $L_{24}$. We have:

\begin{align*}
 L_{24} & = \frac{2}{1+r}\left(-924 u^{6} H_{-3} -924 u^{6}\sum_{k\geq 1}  {-1/2 \choose k} H_{k-3}\right) \\
& = \frac{2}{1+r}\left(\frac{-512 - 2816 u^2 - 6336 u^4 + 151 \sqrt{2} (1 + u^2)^{11/2})}{24(1+u^{2})^{11/2}}  - 924 u^{6}\sum_{k\geq 1}  {-1/2 \choose k} H_{k-3}\right) \\
\end{align*}

and

\begin{align*}
 &\left|924 u^{6}\sum_{k\geq 1}  {-1/2 \choose k} H_{k-3}\right|  \leq 462u^{6}H_{-2} = \frac{7}{48}u^{2}\left|-13 \sqrt{2} - \frac{576}{(1+u^2)^{11/2}} + \frac{704}{(1+u^2)^{9/2}}\right|.
\end{align*}

The term $L_{23}$ can be decomposed as:

\begin{align*}
 L_{23} & = \frac{2}{1+r}\left(495 u^{4} H_{-2} + 495 u^{4}\sum_{k\geq 1}  {-1/2 \choose k} H_{k-2}\right) \\
& = \frac{2}{1+r}\left(- \frac{65}{16 \sqrt{2}} - \frac{90}{(1+u^2)^{11/2}} + \frac{110}{(1+u^2)^{9/2}}  + 495 u^{4}\sum_{k\geq 1}  {-1/2 \choose k} H_{k-2}\right) \\
\end{align*}

and we have the bound

\begin{align*}
 &\left|495 u^{4}\sum_{k\geq 1}  {-1/2 \choose k} H_{k-2}\right|  \leq \frac{495}{2}u^{4}H_{-1} =
 \frac{45}{64} u^2 \left(-\sqrt{2} +\frac{64}{(1+u^2)^{11/2}}\right).
\end{align*}

We can split $L_{22}$ into:

\begin{align*}
 L_{22} & = \frac{2}{1+r}\left(-66 u^{2} H_{-1} - 66 u^{2}\sum_{k\geq 1}  {-1/2 \choose k} H_{k-1}\right) \\
& = \frac{2}{1+r}\left(\frac{3}{8 \sqrt{2}} - \frac{12}{(1+u^2)^{11/2}}  - 66 u^{2}\sum_{k\geq 1}  {-1/2 \choose k} H_{k-1}\right) \\
\end{align*}

and bound the second sum by

\begin{align*}
 &\left|66 u^{2}\sum_{k\geq 1}  {-1/2 \choose k} H_{k-1}\right|  \leq 33u^{2}H_{0} \\
 & = 33u^{2}\left(\frac{137969}{55440 \sqrt{2}} - \frac{2}{11(1+u^2)^{11/2}} - \frac{2}{9(1+u^2)^{9/2}}\right. \\
& \left.-\frac{2}{7(1+u^2)^{7/2}} - \frac{2}{5(1+u^2)^{5/2}} - \frac{2}{3(1+u^2)^{3/2}}\right. \\
& \left. - \frac{2}{\sqrt{1+u^{2}}} - 2 \arcsinh(1) + 2 \arcsinh\left(\frac{1}{|u|}\right)\right).
\end{align*}

In order to compute $L_{21}$ we use formula \eqref{formulaL}:

\begin{align*}
\sum_{k\geq 0}  {-1/2 \choose k} H_{k} & = H_{0} + \sum_{k\geq 1}  {-1/2 \choose k} H_{k} \\
& = H_{0} + \sum_{k\geq 1}  {-1/2 \choose k} \frac{1}{k}\left(\frac{1}{(1+u^{2})^{11/2}} - \frac{u^{2k}}{\sqrt{2^{11}}}\right) -  u^{2} \sum_{k\geq 1}  {-1/2 \choose k}\left(\frac{2k+11}{2k}\right)H_{k-1}\\
& = \frac{137969}{55440 \sqrt{2}} - \frac{2}{11(1+u^2)^{11/2}} - \frac{2}{9(1+u^2)^{9/2}} -\frac{2}{7(1+u^2)^{7/2}} - \frac{2}{5(1+u^2)^{5/2}} \\
& - \frac{2}{3(1+u^2)^{3/2}} - \frac{2}{\sqrt{1+u^{2}}} - 2 \arcsinh(1) + 2 \arcsinh\left(\frac{1}{|u|}\right) \\
& + \frac{1}{64}\left(\frac{128(\log(2) - \arcsinh(1))}{(1+u^2)^{5/2}} - \sqrt{2}\log(4) + 2\sqrt{2}\log(1+\sqrt{1+u^2})\right) \\
& -  u^{2} \sum_{k\geq 1}  {-1/2 \choose k}\left(\frac{2k+11}{2k}\right)H_{k-1}.
\end{align*}

The last sum can be bounded as usual by

\begin{align*}
& \left|u^{2} \sum_{k\geq 1}  {-1/2 \choose k}\left(\frac{2k+11}{2k}\right)H_{k-1}\right| \leq \frac{1}{2}\frac{11}{2}u^{2}H_{0} \\
& = \frac{11}{4}u^{2}\left|\frac{137969}{55440 \sqrt{2}} - \frac{2}{11(1+u^2)^{11/2}} - \frac{2}{9(1+u^2)^{9/2}} -\frac{2}{7(1+u^2)^{7/2}} - \frac{2}{5(1+u^2)^{5/2}}\right. \\
& \left.- \frac{2}{3(1+u^2)^{3/2}} - \frac{2}{\sqrt{1+u^{2}}} - 2 \arcsinh(1) + 2 \arcsinh\left(\frac{1}{|u|}\right)\right|.
\end{align*}

We finally add everything together to write $L$ as

\begin{align*}
L & = L_{ho,ns} + L_{ho,s} + L_{e,ns} + L_{e,s}
\end{align*}

where

\begin{align*}
L_{ho,ns}
& = \frac{4}{1+r}\left(\arcsinh(1) - 33 u^2  (\sqrt{2} - \arcsinh(1)) - u^4 \left(\frac{495}{8}\left(\sqrt{2}-3\arcsinh(1)\right)\right)\right) \\
& + \frac{4}{1+r}\left(- \frac{77}{4}u^{6}\left(13 \sqrt{2} - 15 \arcsinh(1)\right) -\frac{165}{128} u^8 (43 \sqrt{2} - 105 \arcsinh(1)) \right.\\ &\left. \qquad\qquad - \frac{33}{640} u^{10} (257 \sqrt{2} - 315 \arcsinh(1))\right)\\
& + \frac{4}{1+r}\left( -\frac{7}{15360}u^{12}(221 \sqrt{2} - 495 \arcsinh(1)) \right) \\
& + \frac{4}{1+r}\left(\arcsinh(1)-\frac{2182}{3465}\sqrt{2}\right) \\
& + \frac{2}{1+r}\left(u^{12}H(-6) - 66 u^{10}H(-5) + 495u^8H(-4) - 924u^6H(-3) +
  495u^4H(-2) - 66u^2H(-1)\right) \\
&  + \frac{2}{1+r}\left( H(0) - 2 \arctan\left(\frac{1}{|u|}\right)+\sum_{k\geq 1}  {-1/2 \choose k} \frac{1}{k}\left(\frac{1}{(1+u^{2})^{11/2}} - \frac{u^{2k}}{\sqrt{2^{11}}}\right)\right) \\
& = \frac{4}{1+r}\left(\arcsinh(1) + 33u^2(\arcsinh(1)-\sqrt{2}) - \frac{495}{8}u^4\left(\sqrt{2}-3\arcsinh(1)\right)\right. \\ &\left. \qquad\qquad - \frac{77}{4}u^6\left(13\sqrt{2} - 15 \arcsinh(1)\right)\right) \\
& + \frac{4}{1+r}\left(-\frac{165}{128}u^8\left(43\sqrt{2} - 105 \arcsinh(1)\right) - \frac{33}{640}u^{10}\left(257\sqrt{2} - 315 \arcsinh(1)\right)\right. \\ & \left.  \qquad\qquad - \frac{7 u^{12}}{15360}\left(221\sqrt{2} - 495\arcsinh(1)\right)\right) \\
& + \frac{4}{1+r}\left(-\frac{8}{3465(1+u^2)^{11/2}}(1627+11u^2(-604 + 3366 u^2 - 2268 u^4 + 945 u^6))
\right) \\
& + \frac{4}{1+r}\left(\frac{\log(2)-\arcsinh(1)}{(1+u^2)^{11/2}} + \frac{\log\left(\frac{1}{2}\left(1+\sqrt{1+u^2}\right)\right)}{32\sqrt{2}}\right) \\
L_{ho,s} & = \frac{4}{1+r} \arcsinh\left(\frac{1}{|u|}\right)
\end{align*}

and


\begin{align*}
|L_{e,ns}| & \leq \frac{4}{1+r}\left(\frac{13}{2}u^2 + 32 u^2\right)\\
& + \frac{4}{1+r}\left|\frac{u^{2}}{443520}\frac{8192 + 45056 u^2 + 101376 u^4 + 118272 u^6 + 73920 u^8 - 5419 \sqrt{2} (1 + u^2)^{11/2}}{(1+u^2)^{11/2}}\right| \\
& + \frac{4}{1+r}\left|\frac{u^{2}}{2240}\frac{1024 + 5632 u^2 + 12672 u^4 + 14784 u^6 - 533 \sqrt{2} (1 + u^2)^{11/2}}{(1+u^2)^{11/2}}\right| \\
& + \frac{4}{1+r}\left|\frac{5u^{2}}{896}\frac{512 + 2816 u^2 + 6336 u^4 - 151 \sqrt{2}(1 + u^2)^{11/2}}{(1+u^2)^{11/2}}\right| \\
& + \frac{4}{1+r}\left|\frac{7}{96} u^2 \left(-13 \sqrt{2} - \frac{576}{(1+u^2)^{11/2}} + \frac{704}{(1+u^2)^{9/2}}\right)\right| \\
& + \frac{4}{1+r}\left|\frac{45 u^{2}}{128}\left(-\sqrt{2} + \frac{64}{(1+u^2)^{11/2}}\right)\right| \\
& + \frac{4}{1+r}\left|\frac{143}{8}u^{2}\left(\frac{137969}{55440 \sqrt{2}} - \frac{2}{11(1+u^2)^{11/2}} - \frac{2}{9(1+u^2)^{9/2}} -\frac{2}{7(1+u^2)^{7/2}} - \frac{2}{5(1+u^2)^{5/2}}\right.\right. \\
& \left.\left.- \frac{2}{3(1+u^2)^{3/2}} - \frac{2}{\sqrt{1+u^{2}}} - 2 \arcsinh(1)\right)\right| \\
|L_{e,s}| & \leq \frac{4}{1+r}\frac{143}{4}u^{2} \arcsinh\left(\frac{1}{|u|}\right).
\end{align*}

\section{Implementation of the computer-assisted part and rigorous numerical results}
\label{sectioncomputerassisted}

In this section we will discuss the technical details about the implementation of the different integrals that appear in the proofs. We remark that we are computing explicit (but complicated) functions over a one dimensional domain. In order to perform the rigorous computations we used the C-XSC library \cite{CXSC}. The code can be found in the supplementary material.

The implementation is split into several files, and many of the headers of the functions (such as the integration methods) contain pointers to functions (the integrands) so that they can be reused for an arbitrary number of integrals with minimal changes and easy and safe debugging. For the sake of clarity, and at the cost of numerical performance and duplicity in the code, we decided to treat many simple integrals instead of a single big one.

We will only integrate in $\rho'$ (or more specifically, in $\trho'$: see below for the change of variables).  We outline the computation for $\tilde{I}(\rho)$ here but the other parts of the functions are calculated in the same way.

In order to minimize the impact of $a$ and $\beta$ being too small, we transform the original domain $[1-a,1]$ into a reference one: $[-1,1]$.

\begin{align*}
 \tilde{I}(\rho) & = -\frac{1}{2\pi \rho} \int_{1-a}^{1} f'(\rho') K^{1}\left(\frac{\rho}{\rho'}\right) d\rho' \\
& = -\frac{1}{2\pi \left(\frac{a}{2}(\trho-1)+1\right)} \int_{-1}^{1} \frac{a}{2}f_{\rho}\left(\frac{a}{2}(\trho'-1)+1\right)K^{1}\left(\frac{(\trho-1)+\frac{2}{a}}{(\trho'-1)+\frac{2}{a}}\right) d\trho' \\
& = -\frac{1}{2\pi \left(\frac{a}{2}(\trho-1)+1\right)} \int_{-1}^{1} \frac{a}{2}f_{\rho}\left(\frac{a}{2}(\trho'-1)+1\right)(I_{ho,s} + I_{ho,ns} + I_{e,s} + I_{e,ns})\left(\frac{(\trho-1)+\frac{2}{a}}{(\trho'-1)+\frac{2}{a}}\right) d\trho' \\
& \equiv \tilde{I}(\trho),
\end{align*}

where $\trho = \frac{2}{a}(\rho-1)+1$.

There are two basic classes in the programs that enclose all the necessary information used throughout the computations. The first one is called \texttt{ParameterSet} and has the following members: two doubles, \texttt{abs\_tol} and \texttt{rel\_tol}, providing the desired tolerances used to accept or reject the enclosure of the integral in the adaptive integration scheme described below; two intervals, \texttt{a} and \texttt{beta}, which are parameters of the system. We take them to be intervals since the actual value of $a$ is not representable by a computer. A \texttt{ParameterSet} also contains two intervals, \texttt{Left} and \texttt{Right}, describing the boundaries of the integration region; two integers, \texttt{region\_rho} and \texttt{region\_rhop}, denoting whether $\trho$ and $\trho'$ are in $[-1,-1+\beta]$,$[-1+\beta,1-\beta]$ or $[1-\beta,1]$ respectively. Finally, there is also an interval \texttt{rho\_normalized} indicating the value of $\trho$ (we remark that we are integrating in $\trho'$). The second data structure is called \texttt{IntegrationResult} and is composed of a \texttt{ParameterSet}, an interval \texttt{result} containing the result of the integration, an ivector (vector of intervals) \texttt{error\_by\_coordinate} which has information about the error in the different directions and an integer \texttt{flag} which is set to 1 if we ever encounter an error in the program (e.g. a division by zero due to overestimation).

We now explain how the integrals are calculated. By technical issues explained below, we will split the integration region $[-1,1]$ into smaller pieces and sum the contributions over each piece. Regardless of the domain, the integration is done in an adaptive way unless specified. We keep track of the regions over which we need to integrate in a Standard Template Library \texttt{priority\_queue}, which keeps the \texttt{IntegrationResults} sorted by absolute width of their member \texttt{result}. We operate by taking the topmost element (i.e. the one with the highest absolute width) and deciding to accept the result or reject it. This is done based on the width of the result in an absolute and a relative (to the length of the integration region) way (it has to be smaller than \texttt{abs\_tol} and \texttt{rel\_tol} respectively). In the latter case, we split the region and recomputed the integral on both subregions. The splitting is done by the midpoint. In order to avoid infinite loops -- which could potentially happen since there is uncertainty in the value of $\trho$ --, we repeat this step at most \texttt{MAX\_ELEMENTS\_EVALUATED} times. In our code, \texttt{MAX\_ELEMENTS\_EVALUATED} $= 100.000$. All integrations are done using a Gauss-Legendre quadrature of order 2, given by:

\begin{align*}
\int_{a}^{b} f(x) dx \in  \frac{b-a}{2}\left(f\left(\frac{b-a}{2}\frac{\sqrt{3}}{3} + \frac{b+a}{2}\right)+f\left(-\frac{b-a}{2}\frac{\sqrt{3}}{3} + \frac{b+a}{2}\right)\right)
+\frac{1}{4320}(b-a)^{5}f^{4}([a,b]).
\end{align*}

Once we have defined the basic classes and explained the integration method, we now turn into the discussion of the splitting of the interval $[-1,1]$. We will compute 4 different integrals depending on whether we are integrating $I_{ho,ns}$,$I_{ho,s}$,$I_{e,ns}$,$I_{e,s}$ (see Appendix \ref{sectionasymptotics}). On the one hand, the integrals of $I_{ho,ns}$ and $I_{e,ns}$ will be performed on the full interval $[-1,1]$ taking care of adjusting the regions of $\trho$ and $\trho'$ accordingly (see Figure \ref{regionsF} for a depiction of the different regions) in order to adjust the expression of $f_{\rho}$ accordingly to the region. We remark that because of the monotonicity of $f_{\rho}$, whenever we want to evaluate $f_{\rho}$ in an interval it is enough to compute it at the endpoints and take the hull of the two results. On the other hand, the integrals of $I_{ho,s}$ and $I_{e,s}$ will be split into a staircase domain and a singularity region depending on $(\rho,\rho')$. The staircase domain is shown in Figure \ref{staircase} for $N = 20, \beta = \frac{2}{N}$.

\begin{figure}[ht]
\centering
\includegraphics[width=0.6\textwidth]{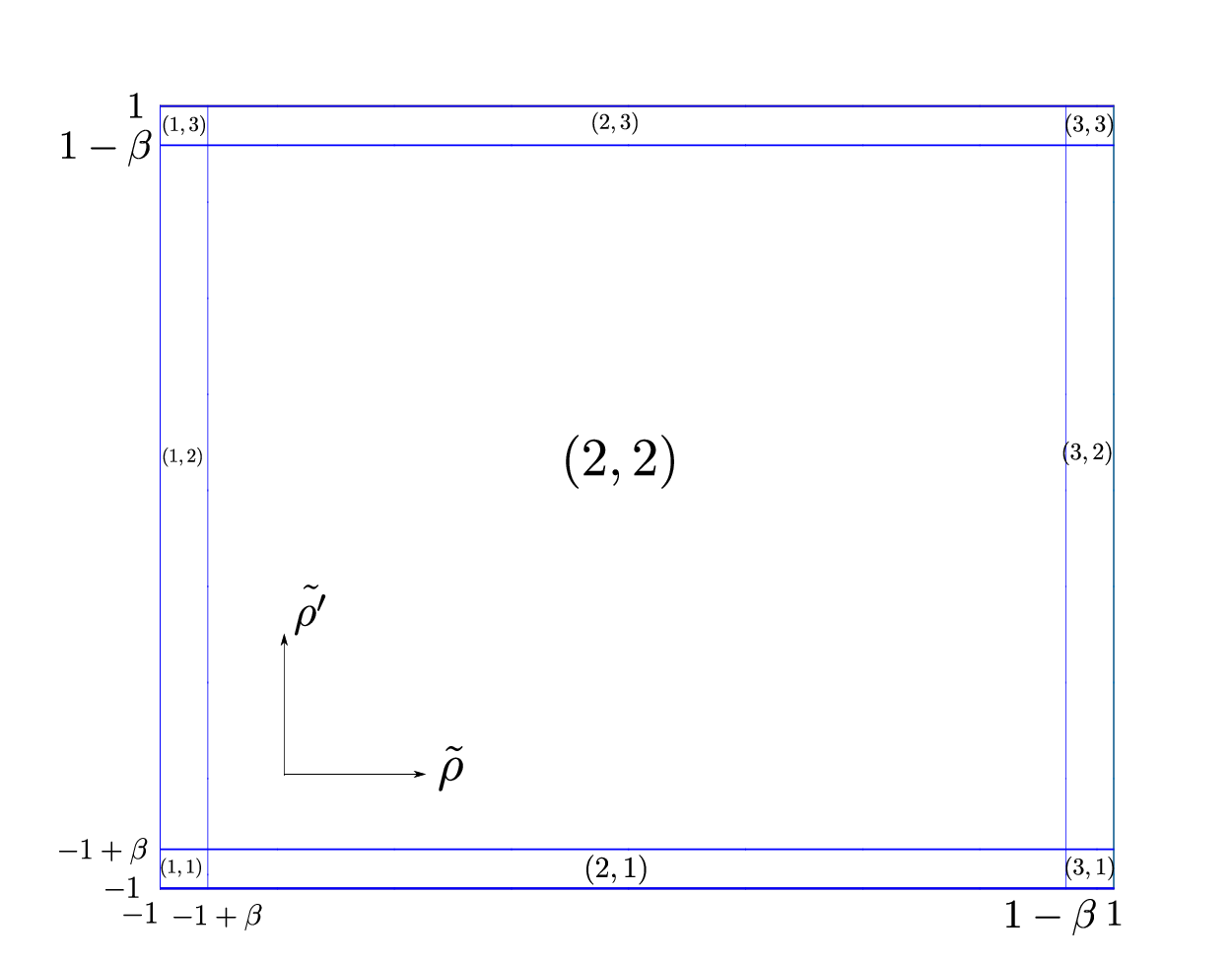}
\caption{The 9 different possibilities in $(\rho,\rho')$ leading to different values of $f'$.}
\label{regionsF}
\end{figure}

\begin{figure}[ht]
\centering
\includegraphics[width=0.6\textwidth]{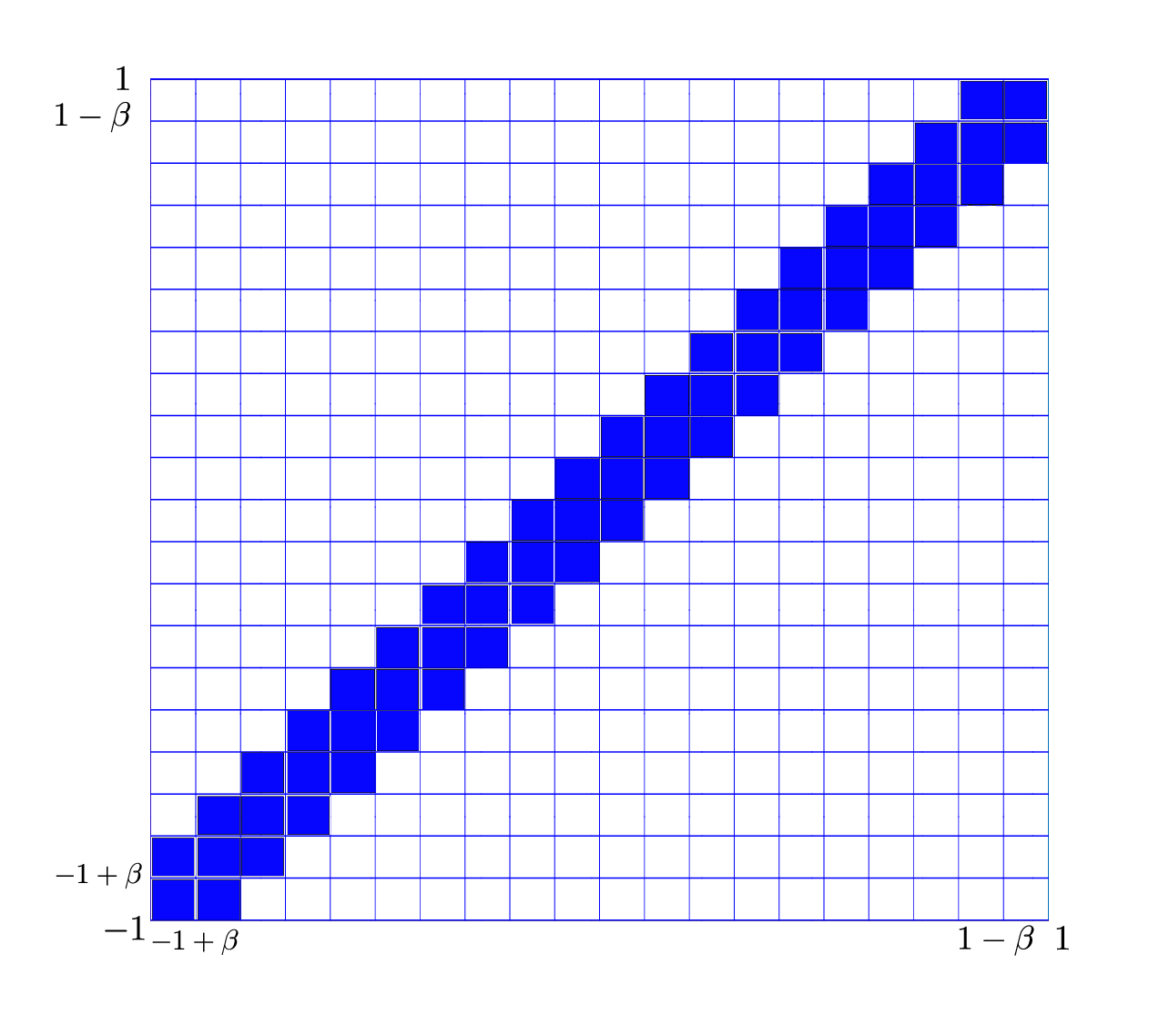}
\caption{Different integration regions. Colored: singular region, white: staircase region.}
\label{staircase}
\end{figure}

In order to integrate over the singularity region, we will integrate by factoring out everything out of the factor $\arcsinh(\frac{1}{|u|})$ and integrating explicitly.

For example, if we want to integrate

\begin{align}\label{integralarcsinh}
 \int A(\rho,\rho') \arcsinh\left(\frac{1}{|u|}\right) d\rho'
\end{align}

and we have uniform bounds on $A$ of the type

\begin{align*}
 \underline{a} \leq A(\rho,\rho') \leq \overline{a},
\end{align*}

then the integral \eqref{integralarcsinh} can be bounded by

\begin{align*}
 \underline{a}\int \arcsinh\left(\frac{1}{|u|}\right) d\rho' \leq \int A(\rho,\rho') \arcsinh\left(\frac{1}{|u|}\right) d\rho' \leq \overline{a}\int \arcsinh\left(\frac{1}{|u|}\right) d\rho',
\end{align*}

yielding the enclosure

\begin{align*}
 \int A(\rho,\rho') \arcsinh\left(\frac{1}{|u|}\right) d\rho' \in \left[\underline{a} \int \arcsinh\left(\frac{1}{|u|}\right)d\rho' ,\overline{a} \int \arcsinh\left(\frac{1}{|u|}\right)d\rho'\right].
\end{align*}

A strightforward but long calculation yields the following lemma, which is useful for that purpose:

\begin{lemma}
 Let $-1 \leq \tilde{\rho},\tilde{\rho}' \leq 1$, $0 < a < 1$. We have that:
\begin{align*}
 & \int_{c}^{d} \arcsinh\left(\frac{|\tilde{\rho} + \tilde{\rho}' - 2 + \frac{4}{a}|}{|\tilde{\rho} - \tilde{\rho}'|}\right) d\tilde{\rho}' \\
& = -c \arcsinh\left(\frac{4+a(-2+c+\tilde{\rho})}{a|c-\trho|}\right) + d \arcsinh\left(\frac{4+a(-2+d+\tilde{\rho})}{a|d-\trho|}\right) + \trho\log\left(\frac{|d-\trho|}{|c-\trho|}\right) \\
& + \frac{\sqrt{2}}{a}(-2+a-a\trho)\log\left(\frac{2+a(-1+c)+\sqrt{8+4a(-2+c+\trho)+a^2(2+(-2+c)c+(-2+\trho)\trho)}}{2+a(-1+d)+\sqrt{8+4a(-2+d+\trho)+a^2(2+(-2+d)d+(-2+\trho)\trho)}}\right) \\
& + \trho \log\left(\frac{4+a(-2+c+\trho)+\sqrt{2}\sqrt{8+4a(-2+c+\trho)+a^2(2+(-2+c)c+(-2+\trho)\trho)}}{4+a(-2+d+\trho)+\sqrt{2}\sqrt{8+4a(-2+d+\trho)+a^2(2+(-2+d)d+(-2+\trho)\trho)}}\right) \\
& = -c \log\left(\frac{4}{a}+(-2+c+\tilde{\rho}) + \sqrt{(c-\trho)^2 + \left(\frac{4}{a}+(-2+c+\tilde{\rho})\right)^2}\right) + (c-\trho)\log\left(|c-\trho|\right) \\
& +d \log\left(\frac{4}{a}+(-2+d+\tilde{\rho}) + \sqrt{(d-\trho)^2 + \left(\frac{4}{a}+(-2+d+\tilde{\rho})\right)^2}\right) - (d-\trho)\log\left(|d-\trho|\right) \\
& + \frac{\sqrt{2}}{a}(-2+a-a\trho)\log\left(\frac{2+a(-1+c)+\sqrt{8+4a(-2+c+\trho)+a^2(2+(-2+c)c+(-2+\trho)\trho)}}{2+a(-1+d)+\sqrt{8+4a(-2+d+\trho)+a^2(2+(-2+d)d+(-2+\trho)\trho)}}\right) \\
& + \trho \log\left(\frac{4+a(-2+c+\trho)+\sqrt{2}\sqrt{8+4a(-2+c+\trho)+a^2(2+(-2+c)c+(-2+\trho)\trho)}}{4+a(-2+d+\trho)+\sqrt{2}\sqrt{8+4a(-2+d+\trho)+a^2(2+(-2+d)d+(-2+\trho)\trho)}}\right). \\
\end{align*}

\end{lemma}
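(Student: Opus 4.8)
The plan is to verify the closed-form antiderivative stated in the lemma by straightforward differentiation, treating the identity as an elementary (if lengthy) calculus fact. First I would set up notation: write $\varphi(\trho') := \arcsinh\!\left(\dfrac{|\trho+\trho'-2+\tfrac{4}{a}|}{|\trho-\trho'|}\right)$ for the integrand, and note that since $-1\le \trho,\trho'\le 1$ and $0<a<1$ we have $\trho+\trho'-2+\tfrac{4}{a} \ge -4 + \tfrac4a > 0$ on the relevant range, so the outer absolute value can be dropped (it equals $\tfrac4a + (\trho+\trho'-2)$), while $|\trho-\trho'|$ genuinely needs the absolute value and produces a $\sign(\trho-\trho')$ when differentiated. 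Using $\arcsinh(x) = \log\!\left(x+\sqrt{x^2+1}\right)$ (the identity already recorded in the proof of Lemma \ref{log+c1}) I would rewrite $\varphi(\trho') = \log\!\left(A + \sqrt{A^2+B^2}\right) - \log|B|$ with $A = \tfrac4a+(\trho+\trho'-2)$ and $B = \trho-\trho'$; this is exactly the second form of the right-hand side in the lemma, so the two displayed expressions for the antiderivative agree with each other once one knows $\arcsinh(A/|B|) = \log(A+\sqrt{A^2+B^2}) - \log|B|$.

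Next I would differentiate the claimed primitive $\Phi(\trho')$ term by term with respect to $\trho'$ (holding $\trho$ and $a$ fixed) and check it equals $\varphi(\trho')$; evaluating $\Phi(d)-\Phi(c)$ then gives the integral. It is cleanest to work with the second form: $\Phi$ is a sum of four types of terms, namely $\pm \trho'\log\!\left(\tfrac4a+(\trho'+\trho-2)+\sqrt{(\trho'-\trho)^2+(\tfrac4a+\trho'+\trho-2)^2}\right)$ evaluated at the endpoints, $\pm(\trho'-\trho)\log|\trho'-\trho|$ at the endpoints, a term linear in $\log$ of an expression of the shape $2+a(-1+\trho')+\sqrt{\cdots}$ with coefficient $\tfrac{\sqrt2}{a}(-2+a-a\trho)$, and a term $\trho\log$ of $4+a(-2+\trho'+\trho)+\sqrt2\sqrt{\cdots}$. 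Differentiating a term $g\log(g+\sqrt{g^2+h^2})$ produces $g'\log(g+\sqrt{g^2+h^2}) + g\cdot\dfrac{g g' + h h'}{\sqrt{g^2+h^2}(g+\sqrt{g^2+h^2})}$, and one checks the logarithmic pieces from the $\trho'\log(\cdots)$ term and the $(\trho'-\trho)\log|\trho'-\trho|$ term combine to give precisely $\varphi(\trho')$, while all the remaining algebraic-fraction pieces — including the contributions of the two extra $\log$ terms involving the $2+a(-1+\trho')$ and $4+a(-2+\trho'+\trho)$ arguments — must cancel identically. Verifying that cancellation is bookkeeping: one clears the common denominator $\sqrt{(\trho'-\trho)^2 + (\tfrac4a + \trho'+\trho-2)^2}$ (after rescaling, the square root appearing as $\tfrac{1}{a}\sqrt{8+4a(-2+\trho'+\trho)+a^2(2+(-2+\trho')\trho'+(-2+\trho)\trho)}$) and checks a polynomial identity in $\trho',\trho,a$.

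The main obstacle — really the only obstacle — is the sheer algebraic bulk: correctly differentiating each $\log$-term, keeping track of the $\sign(\trho-\trho')$ (which drops out because $(\trho'-\trho)\log|\trho'-\trho|$ has derivative $\log|\trho'-\trho|+1$ regardless of sign and the "$+1$" cancels against a piece coming from the other terms), and confirming the grand cancellation of all non-logarithmic terms. There is no conceptual difficulty and no subtlety about convergence: the integrand $\varphi$ has only a logarithmic (hence integrable) singularity at $\trho'=\trho$, and the primitive $\Phi$ extends continuously across $\trho'=\trho$ since $(\trho'-\trho)\log|\trho'-\trho|\to 0$ there, so the formula is valid even when the interval $[c,d]$ contains $\trho$. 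In the write-up I would simply state that the identity is checked by differentiating the right-hand side, note the two displayed forms are equal via the $\arcsinh$–$\log$ relation, and — as the paper already does for its other long explicit computations — suppress the routine verification, perhaps remarking that it was also confirmed with a symbolic algebra system.
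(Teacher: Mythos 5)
Your proposal is correct and matches the paper's approach: the paper itself only asserts that the identity follows from ``a straightforward but long calculation,'' which is exactly the differentiation-and-cancellation check you outline, and you correctly identify the two points that need care (the positivity of $\tilde{\rho}+\tilde{\rho}'-2+\tfrac{4}{a}$ so the outer absolute value drops, and the continuous extension of the primitive across $\tilde{\rho}'=\tilde{\rho}$ via $(\tilde{\rho}'-\tilde{\rho})\log|\tilde{\rho}'-\tilde{\rho}|\to 0$, which is what makes the formula valid on intervals containing the logarithmic singularity). The observation that the two displayed forms coincide via $\arcsinh(A/|B|)=\log(A+\sqrt{A^2+B^2})-\log|B|$, with the $\tilde{\rho}\log(|d-\tilde{\rho}|/|c-\tilde{\rho}|)$ term absorbed into the $(c-\tilde{\rho})$ and $(d-\tilde{\rho})$ coefficients, is also exactly right.
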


The computations were run on a desktop with 8 cores at 3.10 GHz and 8 GB of RAM. The different runtimes are described in each of the lemmas separately.

We will explain the algorithms and the procedures of the different lemmas from easier to harder, irrespective of the order in which the lemmas are found in the main sections of the paper.

\begin{proofsth}{Lemma \ref{lemmaImayorquelambda}}

We compute the hull (an enclosure of the range) of $\tilde{I}\left(\frac{2}{a}(\trho-1)+1\right)$ for $\trho \in [-1,1]$. To do so, we split the interval $[-1,1]$ into $N = 512$ intervals $I_{j}, j=0\ldots N-1$ of equal size and compute an enclosure $II_{j} = \tilde{I}\left(\frac{2}{a}(\trho-1)+1\right)$, for $\trho = I_{j}$. We can prove the following estimate:

\begin{align*}
 \min_{\tilde{\rho} \in [-1,1]} \tilde{I}\left(\frac{2}{a}(\trho-1)+1\right) \geq \min_{j} II_{j} > 1.2655
\end{align*}


The minimum is attained in the last region, $\trho \in \left[1-\frac{2}{N},1\right]$. A nonrigorous computation shows that indeed this minimum is attained at $\trho = 1$. The detailed breakdown of the regions can be found in the file \texttt{output/output\_Min\_I\_512.out} in the supplementary material. The tolerances \texttt{abs\_tol} and \texttt{rel\_tol} were set to $10^{-5}$. The computation took approximately 242 minutes, giving an average time of around 7 seconds per integral.

\end{proofsth}

\begin{proofsth}{Lemma \ref{lemacotae}}

For every $\trho \in [-1,1]$, we complete an enclosure of $e\left(\frac{2}{a}(\trho-1)+1\right)$. To do so, we split the interval $[-1,1]$ into $N = 512$ intervals $I_{j}, j=0\ldots N-1$ of equal size and compute an enclosure $\mathcal{E}^{3}_{j} = e\left(\frac{2}{a}(\trho-1)+1\right)$, for $\trho = I_{j}$. Finally, we can estimate an enclosure of the $L^{2}$ norm of $e$ and the scalar product with $B_{sj}$ by

\begin{align*}
 \|e\|_{L^{2}} \in \left(\frac{a}{N}\sum_{j=0}^{N-1}(\mathcal{E}^{3}_{j})^{2}\right)^{\frac12}, \\
|\langle e, B_{sj} \rangle| \in \left|\frac{a}{N}\sum_{j=0}^{N-1}(\mathcal{E}^{3}_{j}B_{sj}(I_{j}))\right| = \left|\frac{a}{N}\sum_{j=1}^{N-2}(\mathcal{E}^{3}_{j})\right|,
\end{align*}

where we have used that $B_{sj}$ is piecewise constant in each of the $I_{j}$: it is zero if $j = 0$ or $j = N-1$ and 1 otherwise. This yields in this particular case

\begin{align*}
\|e\|_{L^{2}} & < 0.0905 \\
|\langle e,B_{sj} \rangle | & < 0.0101
\end{align*}

The detailed breakdown of the regions can be found in the file \texttt{output/output\_E\_3.out} in the supplementary material. The tolerances \texttt{AbsTol} and \texttt{RelTol} were set to $10^{-5}$. The computation took approximately 563 minutes, giving an average time of around 16.5 seconds per integral.

\end{proofsth}

\begin{proofsth}{Lemma \ref{lemacotasABC}}

For every $\trho \in [-1,1]$, we complete an enclosure of $\Theta^{3}_{A}B_{sj}\left(\frac{2}{a}(\trho-1)+1\right)$. To do so, we split the interval $[-1,1]$ into $N = 512$ intervals $I_{j}, j=0\ldots N-1$ of equal size and compute an enclosure $T^{3}_{A,j} = \Theta^{3}_{A}B_{sj}\left(\frac{2}{a}(\trho-1)+1\right)$, for $\trho = I_{j}$. Finally, we can estimate an enclosure of the $L^{2}$ norm of $\Theta^{3}_{A}B_{sj}$ by

\begin{align*}
 \|\Theta^{3}_{A}B_{sj}\|_{L^{2}} \in \left(\frac{a}{N}\sum_{j=0}^{N-1}(T^{3}_{A,j})^{2}\right)^{\frac12},
\end{align*}

yielding in this particular case

\begin{align*}
\|\Theta^{3}_{A}B_{sj}\|_{L^{2}} < 0.0629
\end{align*}


The detailed breakdown of the regions can be found in the file \texttt{output/output\_Theta\_A\_N\_512.out} in the supplementary material. The tolerances \texttt{abs\_tol} and \texttt{rel\_tol} were set to $10^{-5}$. The computation took approximately 405 minutes, giving an average time of around 12 seconds per integral.

\end{proofsth}

\begin{proofsth}{Lemma \ref{lemacotascestrella}}

To obtain a bound on $c^{*}$ we will employ the following strategy. First, we can bound $c^{*}$ in the following way:

\begin{align*}
 c^{*} \geq \min \tilde{I}(\rho) + \min_{\|u\|_{L^{2}} = 1, u \in (B_{sj})^{\perp}} \langle T^{3}_{S} u, u\rangle,
\end{align*}

where $T_{S}^{3}$ is the symmetric part of $\Theta^{3} - \tilde{I}$, given by

\begin{align*}
 T_{S}^{3} u(\rho) & = \frac12 \int_{1-a}^{1}\left(f_\rho(\rho')K^3\left(\frac{\rho}{\rho'}\right)\frac{1}{\rho} +
f_\rho(\rho)K^3\left(\frac{\rho'}{\rho}\right)\frac{1}{\rho'}\right)B(\rho') d\rho'
\end{align*}

We know that $T_{S}^{3}$ is symmetric and compact. We approximate it by a finite rank operator. For any $\trho \in [-1,1]$:

\begin{align*}
 (T_{S}^{3}u)(\frac{a}{2}(\trho-1)+1) = T_{S}^{3}u(\trho) \approx T_{fin}^{3}u(\trho) = \int_{-1}^{1}\frac{a}{2}\sum_{i,j=0}^{23} (T_{fin}^{3})_{ij}u_i(\trho)u_j(\trho') u(\frac{a}{2}(\trho'-1)+1) d\trho',
\end{align*}

The matrix $T_{fin}^{3}$ is symmetric and given explicitly in Appendix \ref{appendixprojections}. The functions $u_i$ are an orthonormal basis chosen in the following way:

\begin{align*}
 u_i(\trho) & =
\left\{
\begin{array}{cc}
  \sqrt{\frac{2(2[i/3]+1)}{a\beta}}\text{Leg}([i/3],\frac{2}{\beta}(\trho+1)-1)1_{\{-1 \leq \trho \leq -1+\beta\}}& \text{ if } i \equiv 0 (mod \, 3) \\
  \sqrt{\frac{2[i/3]+1}{a-a\beta}}\text{Leg}([i/3],\frac{\trho}{1-\beta})1_{\{-1+\beta \leq \trho \leq 1-\beta\}} & \text{ if } i \equiv 1 (mod \, 3) \\
  \sqrt{\frac{2(2[i/3]+1)}{a\beta}}\text{Leg}([i/3],\frac{2}{\beta}(\trho-1)+1)1_{\{1-\beta \leq \trho \leq 1\}} & \text{ if } i \equiv 2 (mod \, 3) \\
\end{array}
\right.
\end{align*}

where Leg($a,x$) stands for the standard Legendre polynomial of order $a$, defined for $x \in [-1,1]$ by

\begin{align*}
 \text{Leg}(0,x) = 1, & \quad \text{Leg}(1,x) = x, \\
(n+1)\text{Leg}(n+1,x) & = (2n+1)x\text{Leg}(n,x) - n\text{Leg}(n-1,x), \quad n \geq 1.
\end{align*}

Note that $B_{sj}$ corresponds to $u_1$. We now decompose $T_{S}^{3}$ as:

\begin{align*}
 \min_{\|u\|_{L^{2}} = 1, u \in (B_{sj})^{\perp}} \langle T^{3}_{S} u, u\rangle
\geq \min_{\|u\|_{L^{2}} = 1, u \in (B_{sj})^{\perp}} \langle T^{3}_{fin} u, u\rangle
+ \min_{\|u\|_{L^{2}} = 1, u \in (B_{sj})^{\perp}} \langle (T^{3}_{S} - T^{3}_{fin})u, u\rangle
\end{align*}

The first term in the sum is simply the smallest eigenvalue of the matrix $T_{fin}^{3}$ without the second row and the second column. By Gershgorin's theorem  \cite{Gerschgorin:eigenvalues-theorem}, the eigenvalues of an $n \times n$ matrix $A$ lie inside the union of the disks

\begin{align*}
 D_{i} = \left\{z \in \mathbb{C}, |z-A_{ii}| \leq \sum_{\substack{j=1 \\ j\neq i}} |A_{ij}|\right\}, \quad i =1, \ldots, n.
\end{align*}

In this particular case, this implies that

\begin{align*}
 \min_{\|u\|_{L^{2}} = 1, u \in (B_{sj})^{\perp}} \langle T^{3}_{fin} u, u\rangle > -0.3125,
\end{align*}

where the leftmost disk of $T_{fin}^{3}$ is $D_{5}$. The second term can be bounded via the operator norm

\begin{align*}
 \left|\min_{\|u\|_{L^{2}} = 1, u \in (B_{sj})^{\perp}} \langle (T^{3}_{S} - T^{3}_{fin})u, u\rangle\right|
\leq \|T^{3}_{S} - T^{3}_{fin}\|_{L^{2} \rightarrow L^{2}} \\
\leq
\max_{\trho \in [-1,1]} \int_{-1}^{1} \left|\frac{a}{2}\frac12 \left(f_\rho\left(\frac{a}{2}(\trho'-1)+1\right)K^3\left(\frac{\frac{a}{2}(\trho-1)+1}{\frac{a}{2}(\trho'-1)+1}\right)\frac{1}{(\frac{a}{2}(\trho-1)+1)} \right.\right. \\
\left.\left.+
f_\rho\left(\frac{a}{2}(\trho-1)+1\right)K^3\left(\frac{\frac{a}{2}(\trho'-1)+1}{\frac{a}{2}(\trho-1)+1}\right)\frac{1}{(\frac{a}{2}(\trho'-1)+1)}\right)\right. \\
\left. - \frac{a}{2}\sum_{i,j=0}^{23} (T_{fin}^{3})_{ij}u_i(\trho)u_j(\trho')\right|
d\trho'
\end{align*}
where we have used that the operator is symmetric and the Generalized Young inequality. We computed the following bound:

\begin{align*}
  \|T^{3}_{S} - T^{3}_{fin}\|_{L^{2}} \leq 0.1004
\end{align*}

To do so, we split the subdomains $[-1,-1+\beta]$, $[-1+\beta,1-\beta]$ and $[1-\beta,1]$ for $\trho'$ into a uniform mesh of $N_1 = 512, N_2 = 510 \cdot 16, N_3 = 512$ subintervals respectively. Since we are expecting bounds of the order of the width of the integration integral and we can't do better (the integrand is not $C^{1}$), we simply compute the integrand evaluated in the full interval (a quadrature of order 0). We are careful and the singular part of the integrand is bounded first (all terms that multiply the arcsinh term) and then integrated explicitly separately. This is done whenever $\trho \in I_j, \trho' \in I_k$ and $|j-k| < 2$. Otherwise we evaluate the full integrand. For every $\trho \in I_i$ we sum over all $j$ such that $\trho' \in I_j$ to obtain the $L^1$ bound. Finally, we take the maximum over every $\trho$.

Putting everything together we obtain

\begin{align*}
 c^{*} > 1.2655 - 0.3125 - 0.1004 = 0.8526
\end{align*}

The detailed breakdown of the regions can be found in the file \texttt{output/out\_L1\_Estimates\_T3\_N\_512.out} in the supplementary material. The computation took approximately 12 hours, 53 minutes, giving an average time of around 5 seconds per subinterval in $\trho$.

\end{proofsth}

\begin{proofsth}{Lemma \ref{lemacotae6}}

The proof follows the same strategy as the proof of Lemma \ref{lemacotae}. We get the following:

\begin{align*}
||e^6||_{L^2} < 0.0893.
\end{align*}

The detailed breakdown of the regions can be found in the file \texttt{output/output\_E\_6.out} in the supplementary material. The tolerances \texttt{abs\_tol} and \texttt{rel\_tol} were set to $10^{-5}$. The computation took approximately 604 minutes, giving an average time of around 17.5 seconds per integral.

\end{proofsth}

\begin{proofsth}{Lemma \ref{lemacotascestrella6}}

The proof follows the same strategy as the proof of Lemma \ref{lemacotascestrella}. We get the following figures:

\begin{align*}
 \min_{\|u\|_{L^{2}} = 1, u \in (B_{s}^{6\text{aprox}})^{\perp}} \langle T^{6}_{fin} u, u\rangle > -0.3121.
\end{align*}

\begin{align*}
  \|T^{6}_{S} - T^{6}_{fin}\|_{L^{2}} \leq 0.1179
\end{align*}

\begin{align*}
 c^{6*} > 1.2655 - 0.3121 - 0.1179 = 0.8355
\end{align*}

As before, the leftmost disk is $D_{5}$. The detailed breakdown of the regions can be found in the file \texttt{output/out\_L1\_Estimates\_T6\_N\_512.out} in the supplementary material. The computation took approximately 13 hours, 3 minutes, giving an average time of around 5 seconds per subinterval in $\trho$.

\end{proofsth}

In summary, we have proved the following bounds (we are using that $\|B_{sj}\|_{L^{2}} = \|B_{s}^{6\text{aprox}}\|_{L^{2}} = 1$):

\begin{itemize}
 \item $c^{*} > 0.8526$
\item $\mathcal{A} = \lambda^{*} + |\langle e, B_{sj} \rangle| < 0.3583$
\item $\sqrt{\mathcal{B}} \leq \|e\|_{L^{2}} + \|\Theta_{A}^{3} B_{sj}\|_{L^{2}} < 0.1534 $
\item $\lambda_{3} \leq \lambda_{0} < 0.4117 $
\item $\tilde{I} - \lambda_{3} > 0.8538$
 \item $c^{6*} > 0.8355$
\end{itemize}

\section{Finite projections}
\label{appendixprojections}

In this section we provide the two matrices of size $24 \times 24$: $T^{3}_{fin}$ and $T^{6}_{fin}$ used to approximate the finite dimensional projections of $T^{3}$ and $T^{6}$ respectively. The matrices were computed using nonrigorous integration. The matrices (in a slight different format) can be found in the files \texttt{input/good\_projection\_N\_512.out} and \texttt{input/good\_projection\_T6\_N\_512.out}. In order to write the matrices, because of spacing issues, we will decompose $T^{3}_{fin}$ and $T^{6}_{fin}$ into the following blocks:

\begin{align*}
 T^{3}_{fin} & =
\left(
\begin{array}{cccc}
 T^{3}_{fin,1} & T^{3}_{fin,2} & T^{3}_{fin,3} & T^{3}_{fin,4}
\end{array}
\right) \\
 T^{6}_{fin} & =
\left(
\begin{array}{cccc}
 T^{6}_{fin,1} & T^{6}_{fin,2} & T^{6}_{fin,3} & T^{6}_{fin,4}
\end{array}
\right),
\end{align*}

where every block is $24 \times 6$. The exact expressions are:
\tiny
\begin{align*}
 T^{3}_{fin,1} & = \left(
\begin{array}{cccccc}
-0.00313914631  & -0.03306228140  & -0.00063757204  & -0.00130670573  & 0.00933189622  & 0.00027670636 \\
-0.03306228140  & -1.12901165788  & -0.03145756256  & -0.00822746089  & 0.00729039148  & 0.00876971358 \\
-0.00063757204  & -0.03145756256  & -0.00299812421  & -0.00022549348  & -0.00911506433  & 0.00124776818 \\
-0.00130670573  & -0.00822746089  & -0.00022549348  & -0.00021202888  & 0.00270210646  & 0.00000013750 \\
0.00933189622  & 0.00729039148  & -0.00911506433  & 0.00270210646  & -0.23717252091  & 0.00223673972 \\
0.00027670636  & 0.00876971358  & 0.00124776818  & 0.00000013750  & 0.00223673972  & -0.00020166430 \\
0.00006084465  & -0.00000985095  & -5.18229665685\cdot 10^{-8}  & -0.00005068073  & 0.00001298234  & 4.63109155689\cdot 10^{-11} \\
-0.00402764479  & 0.06018656936  & -0.00384397048  & -0.00114915910  & 0.00170736716  & 0.00102743751 \\
-4.43314493090\cdot 10^{-8}  & -0.00000908783  & 0.00005786424  & -4.60911625268\cdot 10^{-11}  & -0.00001219458  & 0.00004810665 \\
0.00033343322  & 0.00192283641  & 0.00005295158  & 0.00004143653  & -0.00062130092  & -1.82960948332\cdot 10^{-8} \\
0.00233871666  & -0.00072531006  & -0.00222568341  & 0.00067663595  & 0.04134607065  & 0.00062743044 \\
-0.00006498459  & -0.00205039765  & -0.00031825067  & -1.39917284412\cdot 10^{-8}  & -0.00051248314  & 0.00003940755 \\
0.00000533557  & 0.00000206570  & 1.15781378363\cdot 10^{-8}  & 0.00003985325  & -0.00000267070  & -6.84832886920\cdot 10^{-12} \\
-0.00155761794  & 0.00542593789  & -0.00148129715  & -0.00046479429  & -0.00053855113  & 0.00043588867 \\
9.90200697265\cdot 10^{-9}  & 0.00000190393  & 0.00000505660  & 5.62487168223\cdot 10^{-12}  & 0.00000250964  & -0.00003785447 \\
-0.00007592114  & -0.00048164344  & -0.00001327566  & 0.00000498929  & 0.00015501214  & 4.58706059629\cdot 10^{-9} \\
0.00112127249  & -0.00008203811  & -0.00106597793  & 0.00034694746  & 0.00518713553  & 0.00032698003 \\
0.00001629250  & 0.00051364377  & 0.00007249718  & 3.50789736574\cdot 10^{-9}  & 0.00012776307  & 0.00000473106 \\
0.00000125611  & 8.65982848231\cdot 10^{-8}  & -3.19226015591\cdot 10^{-9}  & -0.00000437207  & -0.00000039251  & 1.95786950003\cdot 10^{-12} \\
-0.00084878958  & 0.00138998159  & -0.00080678323  & -0.00027291240  & -0.00008220557  & 0.00025788023 \\
-2.73010821736\cdot 10^{-9}  & 1.02554282905\cdot 10^{-7}  & 0.00000119690  & -1.62202100706\cdot 10^{-12}  & 0.00000038596  & 0.00000415121 \\
0.00001432963  & 0.00008343141  & 0.00000227980  & 0.00000129772  & -0.00002784648  & -7.87753818442\cdot 10^{-10} \\
0.00066557241  & -0.00002363432  & -0.00063256170  & 0.00022255360  & 0.00154868510  & 0.00021062293 \\
-0.00000279787  & -0.00008889372  & -0.00001367678  & -6.02460704593\cdot 10^{-10}  & -0.00002311176  & 0.00000123395 \\
\end{array}
\right)
\end{align*}
\begin{align*}
 T^{3}_{fin,2} & = \left(
\begin{array}{cccccc}
 0.00006084465  & -0.00402764479  & -4.43314493090\cdot 10^{-8}  & 0.00033343322  & 0.00233871666  & -0.00006498459 \\
 -0.00000985095  & 0.06018656936  & -0.00000908783  & 0.00192283641  & -0.00072531006  & -0.00205039765 \\
 -5.18229665685\cdot 10^{-8}  & -0.00384397048  & 0.00005786424  & 0.00005295158  & -0.00222568341  & -0.00031825067 \\
 -0.00005068073  & -0.00114915910  & -4.60911625268\cdot 10^{-11}  & 0.00004143653  & 0.00067663595  & -1.39917284412\cdot 10^{-8} \\
 0.00001298234  & 0.00170736716  & -0.00001219458  & -0.00062130092  & 0.04134607065  & -0.00051248314 \\
 4.63109155689\cdot 10^{-11}  & 0.00102743751  & 0.00004810665  & -1.82960948332\cdot 10^{-8}  & 0.00062743044  & 0.00003940755 \\
 -0.00010321148  & -0.00001414574  & -2.80290333759\cdot 10^{-14}  & -0.00003848684  & 0.00001470528  & -3.67487307816\cdot 10^{-12} \\
 -0.00001414574  & -0.12947879697  & -0.00001337930  & 0.00025555410  & 0.00095216359  & -0.00022761700 \\
 -2.80290333759\cdot 10^{-14}  & -0.00001337930  & -0.00009797657  & 4.90797112113\cdot 10^{-12}  & -0.00001393579  & 0.00003651136 \\
 -0.00003848684  & 0.00025555410  & 4.90797112113\cdot 10^{-12}  & -0.00006239992  & -0.00014435698  & -3.49966215326\cdot 10^{-15} \\
 0.00001470528  & 0.00095216359  & -0.00001393579  & -0.00014435698  & -0.08886148634  & -0.00013349522 \\
 -3.67487307816\cdot 10^{-12}  & -0.00022761700  & 0.00003651136  & -3.49966215326\cdot 10^{-15}  & -0.00013349522  & -0.00005919817 \\
 0.00003020315  & 0.00000287744  & 2.98104624843\cdot 10^{-15}  & -0.00001908529  & -0.00000297094  & 1.50361388430\cdot 10^{-14} \\
 -0.00001495138  & 0.03027233689  & -0.00001418074  & 0.00009471142  & 0.00066179035  & -0.00008862582 \\
 3.14813570358\cdot 10^{-15}  & 0.00000272492  & 0.00002869680  & -1.48184820399\cdot 10^{-14}  & 0.00000282037  & 0.00001807875 \\
 0.00002961061  & -0.00006309436  & -1.21506679426\cdot 10^{-12}  & 0.00002355898  & 0.00003503703  & -7.01151105584\cdot 10^{-16} \\
 0.00001500747  & -0.00041014040  & -0.00001423998  & -0.00006730898  & 0.02375644464  & -0.00006331650 \\
 9.04914568531\cdot 10^{-13}  & 0.00005613296  & -0.00002812073  & -6.97673484952\cdot 10^{-15}  & 0.00003236439  & 0.00002238336 \\
 0.00000406644  & 0.00000066295  & -5.79173351464\cdot 10^{-14}  & 0.00002138146  & -0.00000090330  & -5.68804673351\cdot 10^{-14} \\
 -0.00001493806  & 0.00436069767  & -0.00001417756  & 0.00005029006  & -0.00032965875  & -0.00004744310 \\
 -7.69293477455\cdot 10^{-14}  & 0.00000063871  & 0.00000386310  & 5.43227769034\cdot 10^{-14}  & 0.00000086668  & -0.00002030190 \\
 -0.00000252131  & 0.00001241650  & 7.34844588938\cdot 10^{-14}  & 0.00000330787  & -0.00000788818  & 8.17156504431\cdot 10^{-14} \\
 0.00001478127  & -0.00007108881  & -0.00001403083  & -0.00003888592  & 0.00368849025  & -0.00003674960 \\
 -4.37312250174\cdot 10^{-14}  & -0.00001114858  & 0.00000239297  & 1.14001332420\cdot 10^{-13}  & -0.00000734374  & 0.00000314218 \\
\end{array}
\right)
\end{align*}
\begin{align*}
 T^{3}_{fin,3} & =
\left(
\begin{array}{cccccc}
 0.00000533557  & -0.00155761794  & 9.90200697265\cdot 10^{-9}  & -0.00007592114  & 0.00112127249  & 0.00001629250 \\
 0.00000206570  & 0.00542593789  & 0.00000190393  & -0.00048164344  & -0.00008203811  & 0.00051364377 \\
 1.15781378363\cdot 10^{-8}  & -0.00148129715  & 0.00000505660  & -0.00001327566  & -0.00106597793  & 0.00007249718 \\
 0.00003985325  & -0.00046479429  & 5.62487168223\cdot 10^{-12}  & 0.00000498929  & 0.00034694746  & 3.50789736574\cdot 10^{-9} \\
 -0.00000267070  & -0.00053855113  & 0.00000250964  & 0.00015501214  & 0.00518713553  & 0.00012776307 \\
 -6.84832886920\cdot 10^{-12}  & 0.00043588867  & -0.00003785447  & 4.58706059629\cdot 10^{-9}  & 0.00032698003  & 0.00000473106 \\
 0.00003020315  & -0.00001495138  & 3.14813570358\cdot 10^{-15}  & 0.00002961061  & 0.00001500747  & 9.04914568531\cdot 10^{-13} \\
 0.00000287744  & 0.03027233689  & 0.00000272492  & -0.00006309436  & -0.00041014040  & 0.00005613296 \\
 2.98104624843\cdot 10^{-15}  & -0.00001418074  & 0.00002869680  & -1.21506679426\cdot 10^{-12}  & -0.00001423998  & -0.00002812073 \\
 -0.00001908529  & 0.00009471142  & -1.48184820399\cdot 10^{-14}  & 0.00002355898  & -0.00006730898  & -6.97673484952\cdot 10^{-15} \\
 -0.00000297094  & 0.00066179035  & 0.00000282037  & 0.00003503703  & 0.02375644464  & 0.00003236439 \\
 1.50361388430\cdot 10^{-14}  & -0.00008862582  & 0.00001807875  & -7.01151105584\cdot 10^{-16}  & -0.00006331650  & 0.00002238336 \\
 -0.00004067472  & 0.00000301020  & -3.70438406451\cdot 10^{-15}  & -0.00001108514  & -0.00000301927  & 3.33415541431\cdot 10^{-14} \\
 0.00000301020  & -0.06718915938  & 0.00000286072  & -0.00002243646  & 0.00050351668  & 0.00002096805 \\
 -3.70438406451\cdot 10^{-15}  & 0.00000286072  & -0.00003855497  & -4.36948352896\cdot 10^{-14}  & 0.00000287165  & 0.00001047733 \\
 -0.00001108514  & -0.00002243646  & -4.36948352896\cdot 10^{-14}  & -0.00002712959  & 0.00001542937  & 5.10048792691\cdot 10^{-14} \\
 -0.00000301927  & 0.00050351668  & 0.00000287165  & 0.00001542937  & -0.05364315620  & 0.00001449165 \\
 3.33415541431\cdot 10^{-14}  & 0.00002096805  & 0.00001047733  & 5.10048792691\cdot 10^{-14}  & 0.00001449165  & -0.00002568508 \\
 0.00001920997  & 0.00000111905  & -2.13045262845\cdot 10^{-15}  & -0.00000572563  & -0.00000131717  & -1.45197814251\cdot 10^{-14} \\
 0.00000301060  & 0.01951726310  & 0.00000286489  & -0.00001103849  & 0.00040280165  & 0.00001039344 \\
 2.27927040204\cdot 10^{-14}  & 0.00000107185  & 0.00001825080  & 3.89829629231\cdot 10^{-14}  & 0.00000126051  & 0.00000538519 \\
 0.00001662305  & 0.00000597029  & 1.21914579746\cdot 10^{-14}  & 0.00001614121  & -0.00000498558  & -7.83391499939\cdot 10^{-14} \\
 -0.00000299111  & -0.00027523003  & 0.00000284750  & 0.00000806555  & 0.01654806391  & 0.00000760366 \\
 1.26943093273\cdot 10^{-14}  & -0.00000562167  & -0.00001578051  & -3.69080589571\cdot 10^{-14}  & -0.00000471758  & 0.00001533483 \\
\end{array}
\right)
\end{align*}
\begin{align*}
 T^{3}_{fin,4} & =
\left(
\begin{array}{cccccc}
 0.00000125611  & -0.00084878958  & -2.73010821736\cdot 10^{-9}  & 0.00001432963  & 0.00066557241  & -0.00000279787 \\
 8.65982848231\cdot 10^{-8}  & 0.00138998159  & 1.02554282905\cdot 10^{-7}  & 0.00008343141  & -0.00002363432  & -0.00008889372 \\
 -3.19226015591\cdot 10^{-9}  & -0.00080678323  & 0.00000119690  & 0.00000227980  & -0.00063256170  & -0.00001367678 \\
 -0.00000437207  & -0.00027291240  & -1.62202100706\cdot 10^{-12}  & 0.00000129772  & 0.00022255360  & -6.02460704593\cdot 10^{-10} \\
 -0.00000039251  & -0.00008220557  & 0.00000038596  & -0.00002784648  & 0.00154868510  & -0.00002311176 \\
 1.95786950003\cdot 10^{-12}  & 0.00025788023  & 0.00000415121  & -7.87753818442\cdot 10^{-10}  & 0.00021062293  & 0.00000123395 \\
 0.00000406644  & -0.00001493806  & -7.69293477455\cdot 10^{-14}  & -0.00000252131  & 0.00001478127  & -4.37312250174\cdot 10^{-14} \\
 0.00000066295  & 0.00436069767  & 0.00000063871  & 0.00001241650  & -0.00007108881  & -0.00001114858 \\
 -5.79173351464\cdot 10^{-14}  & -0.00001417756  & 0.00000386310  & 7.34844588938\cdot 10^{-14}  & -0.00001403083  & 0.00000239297 \\
 0.00002138146  & 0.00005029006  & 5.43227769034\cdot 10^{-14}  & 0.00000330787  & -0.00003888592  & 1.14001332420\cdot 10^{-13} \\
 -0.00000090330  & -0.00032965875  & 0.00000086668  & -0.00000788818  & 0.00368849025  & -0.00000734374 \\
 -5.68804673351\cdot 10^{-14}  & -0.00004744310  & -0.00002030190  & 8.17156504431\cdot 10^{-14}  & -0.00003674960  & 0.00000314218 \\
 0.00001920997  & 0.00000301060  & 2.27927040204\cdot 10^{-14}  & 0.00001662305  & -0.00000299111  & 1.26943093273\cdot 10^{-14} \\
 0.00000111905  & 0.01951726310  & 0.00000107185  & 0.00000597029  & -0.00027523003  & -0.00000562167 \\
 -2.13045262845\cdot 10^{-15}  & 0.00000286489  & 0.00001825080  & 1.21914579746\cdot 10^{-14}  & 0.00000284750  & -0.00001578051 \\
 -0.00000572563  & -0.00001103849  & 3.89829629231\cdot 10^{-14}  & 0.00001614121  & 0.00000806555  & -3.69080589571\cdot 10^{-14} \\
 -0.00000131717  & 0.00040280165  & 0.00000126051  & -0.00000498558  & 0.01654806391  & -0.00000471758 \\
 -1.45197814251\cdot 10^{-14}  & 0.00001039344  & 0.00000538519  & -7.83391499939\cdot 10^{-14}  & 0.00000760366  & 0.00001533483 \\
 -0.00001787015  & 0.00000149962  & 1.79534093854\cdot 10^{-14}  & -0.00000204184  & -0.00000166924  & -2.11654961276\cdot 10^{-13} \\
 0.00000149962  & -0.04435474412  & 0.00000143467  & 0.00000442712  & 0.00033271878  & -0.00000420077 \\
 1.79534093854\cdot 10^{-14}  & 0.00000143467  & -0.00001688740  & 2.11077743505\cdot 10^{-13}  & 0.00000159584  & 0.00000188523 \\
 -0.00000204184  & 0.00000442712  & 2.11077743505\cdot 10^{-13}  & -0.00001114387  & -0.00000409665  & -2.57393155040\cdot 10^{-13} \\
 -0.00000166924  & 0.00033271878  & 0.00000159584  & -0.00000409665  & -0.03758247246  & -0.00000389363 \\
 -2.11654961276\cdot 10^{-13}  & -0.00000420077  & 0.00000188523  & -2.57393155040\cdot 10^{-13}  & -0.00000389363  & -0.00001049667 \\
\end{array}
\right)
\end{align*}

\begin{align*}
 T_{fin,1}^{6} & =
\left(
\begin{array}{cccccc}
 -0.00291345537  & -0.02540749127  & -0.00041941419  & -0.00121786943  & 0.00933076659  & 0.00018206274 \\
-0.02540749127  & -0.90497888764  & -0.02424571525  & -0.00635382205  & 0.00563881494  & 0.00674636418 \\
-0.00041941419  & -0.02424571525  & -0.00278371098  & -0.00014838001  & -0.00907915550  & 0.00116335903 \\
-0.00121786943  & -0.00635382205  & -0.00014838001  & -0.00021204206  & 0.00262575863  & 0.00000013645 \\
0.00933076659  & 0.00563881494  & -0.00907915550  & 0.00262575863  & -0.23685105510  & 0.00230342523 \\
0.00018206274  & 0.00674636418  & 0.00116335903  & 0.00000013645  & 0.00230342523  & -0.00020165240 \\
0.00006084737  & -0.00000978359  & -4.86286767418\cdot 10^{-8}  & -0.00005068073  & 0.00001298540  & 4.66620681740\cdot 10^{-11} \\
-0.00403278487  & 0.06006461031  & -0.00384880693  & -0.00114914679  & 0.00170420893  & 0.00102804953 \\
-4.61137477934\cdot 10^{-8}  & -0.00000913619  & 0.00005786179  & -4.64964261953\cdot 10^{-11}  & -0.00001219898  & 0.00004810665 \\
0.00031255555  & 0.00148247120  & 0.00003482935  & 0.00004143807  & -0.00060335343  & -1.67484077527\cdot 10^{-8} \\
0.00233877011  & -0.00072279657  & -0.00222565755  & 0.00067668948  & 0.04134592791  & 0.00062737193 \\
-0.00004274417  & -0.00157496654  & -0.00029841490  & -1.52922857114\cdot 10^{-8}  & -0.00052815010  & 0.00003940615 \\
0.00000533496  & 0.00000205063  & 1.08637530482\cdot 10^{-8}  & 0.00003985325  & -0.00000267139  & -6.88851803831\cdot 10^{-12} \\
-0.00155761678  & 0.00542592858  & -0.00148129586  & -0.00046479549  & -0.00053855101  & 0.00043588952 \\
1.03004567414\cdot 10^{-8}  & 0.00000191475  & 0.00000505715  & 5.72582288790\cdot 10^{-12}  & 0.00000251061  & -0.00003785447 \\
-0.00007068683  & -0.00037123808  & -0.00000873218  & 0.00000498890  & 0.00015051244  & 4.19903510403\cdot 10^{-9} \\
0.00112127247  & -0.00008204165  & -0.00106597795  & 0.00034694746  & 0.00518713748  & 0.00032698004 \\
0.00001071653  & 0.00039444690  & 0.00006752409  & 3.83396386458\cdot 10^{-9}  & 0.00013169100  & 0.00000473141 \\
0.00000125628  & 9.07808750600\cdot 10^{-8}  & -2.99529358660\cdot 10^{-9}  & -0.00000437207  & -0.00000039232  & 1.97070264514\cdot 10^{-12} \\
-0.00084878958  & 0.00138995868  & -0.00080678324  & -0.00027291241  & -0.00008219826  & 0.00025788023 \\
-2.83996989618\cdot 10^{-9}  & 9.96003237305\cdot 10^{-8}  & 0.00000119675  & -1.65191684991\cdot 10^{-12}  & 0.00000038570  & 0.00000415121 \\
0.00001343075  & 0.00006447176  & 0.00000149956  & 0.00000129779  & -0.00002707375  & -7.21116613265\cdot 10^{-10} \\
0.00066557241  & -0.00002363043  & -0.00063256170  & 0.00022255360  & 0.00154868413  & 0.00021062293 \\
-0.00000184033  & -0.00006842433  & -0.00001282276  & -6.58453704807\cdot 10^{-10}  & -0.00002378631  & 0.00000123389 \\
\end{array}
\right)
\end{align*}

\begin{align*}
 T_{fin,2}^{6} & =
\left(
\begin{array}{cccccc}
0.00006084737  & -0.00403278487  & -4.61137477934\cdot 10^{-8}  & 0.00031255555  & 0.00233877011  & -0.00004274417 \\
-0.00000978359  & 0.06006461031  & -0.00000913619  & 0.00148247120  & -0.00072279657  & -0.00157496654 \\
-4.86286767418\cdot 10^{-8}  & -0.00384880693  & 0.00005786179  & 0.00003482935  & -0.00222565755  & -0.00029841490 \\
-0.00005068073  & -0.00114914679  & -4.64964261953\cdot 10^{-11}  & 0.00004143807  & 0.00067668948  & -1.52922857114\cdot 10^{-8} \\
0.00001298540  & 0.00170420893  & -0.00001219898  & -0.00060335343  & 0.04134592791  & -0.00052815010 \\
4.66620681740\cdot 10^{-11}  & 0.00102804953  & 0.00004810665  & -1.67484077527\cdot 10^{-8}  & 0.00062737193  & 0.00003940615 \\
-0.00010321148  & -0.00001414600  & -2.73005911389\cdot 10^{-14}  & -0.00003848684  & 0.00001470528  & -3.71351392015\cdot 10^{-12} \\
-0.00001414600  & -0.12947857817  & -0.00001337906  & 0.00025555125  & 0.00095215812  & -0.00022776080 \\
-2.73005911389\cdot 10^{-14}  & -0.00001337906  & -0.00009797657  & 4.89450772014\cdot 10^{-12}  & -0.00001393580  & 0.00003651136 \\
-0.00003848684  & 0.00025555125  & 4.89450772014\cdot 10^{-12}  & -0.00006239992  & -0.00014436957  & -4.92646031077\cdot 10^{-15} \\
0.00001470528  & 0.00095215812  & -0.00001393580  & -0.00014436957  & -0.08886148567  & -0.00013348148 \\
-3.71351392015\cdot 10^{-12}  & -0.00022776080  & 0.00003651136  & -4.92646031077\cdot 10^{-15}  & -0.00013348148  & -0.00005919817 \\
0.00003020315  & 0.00000287750  & -6.11527496408\cdot 10^{-16}  & -0.00001908529  & -0.00000297095  & 1.88925224729\cdot 10^{-14} \\
-0.00001495138  & 0.03027233185  & -0.00001418074  & 0.00009471169  & 0.00066178980  & -0.00008862601 \\
-3.31589309321\cdot 10^{-16}  & 0.00000272488  & 0.00002869680  & -1.82627377477\cdot 10^{-14}  & 0.00000282036  & 0.00001807875 \\
0.00002961061  & -0.00006309364  & -1.20807810807\cdot 10^{-12}  & 0.00002355898  & 0.00003504019  & 1.06789333275\cdot 10^{-14} \\
0.00001500747  & -0.00041014029  & -0.00001423998  & -0.00006730898  & 0.02375644411  & -0.00006331651 \\
9.12420912576\cdot 10^{-13}  & 0.00005616901  & -0.00002812073  & 1.93046228742\cdot 10^{-15}  & 0.00003236094  & 0.00002238336 \\
0.00000406644  & 0.00000066293  & -6.41351673174\cdot 10^{-14}  & 0.00002138146  & -0.00000090328  & -5.54519037512\cdot 10^{-14} \\
-0.00001493806  & 0.00436069008  & -0.00001417757  & 0.00005029007  & -0.00032965896  & -0.00004744310 \\
-8.14811085733\cdot 10^{-14}  & 0.00000063872  & 0.00000386310  & 5.45200014040\cdot 10^{-14}  & 0.00000086669  & -0.00002030190 \\
-0.00000252131  & 0.00001241640  & 7.05782109673\cdot 10^{-14}  & 0.00000330787  & -0.00000788873  & 8.42888401493\cdot 10^{-14} \\
0.00001478127  & -0.00007109159  & -0.00001403083  & -0.00003888592  & 0.00368849050  & -0.00003674960 \\
-4.67295111503\cdot 10^{-14}  & -0.00001115478  & 0.00000239297  & 1.14232496671\cdot 10^{-13}  & -0.00000734314  & 0.00000314218 \\
\end{array}
\right)
\end{align*}

\begin{align*}
 T_{fin,3}^{6} & =
\left(
\begin{array}{cccccc}
 0.00000533496  & -0.00155761678  & 1.03004567414\cdot 10^{-8}  & -0.00007068683  & 0.00112127247  & 0.00001071653 \\
0.00000205063  & 0.00542592858  & 0.00000191475  & -0.00037123808  & -0.00008204165  & 0.00039444690 \\
1.08637530482\cdot 10^{-8}  & -0.00148129586  & 0.00000505715  & -0.00000873218  & -0.00106597795  & 0.00006752409 \\
0.00003985325  & -0.00046479549  & 5.72582288790\cdot 10^{-12}  & 0.00000498890  & 0.00034694746  & 3.83396386458\cdot 10^{-9} \\
-0.00000267139  & -0.00053855101  & 0.00000251061  & 0.00015051244  & 0.00518713748  & 0.00013169100 \\
-6.88851803831\cdot 10^{-12}  & 0.00043588952  & -0.00003785447  & 4.19903510403\cdot 10^{-9}  & 0.00032698004  & 0.00000473141 \\
0.00003020315  & -0.00001495138  & -3.31589309321\cdot 10^{-16}  & 0.00002961061  & 0.00001500747  & 9.12420912576\cdot 10^{-13} \\
0.00000287750  & 0.03027233185  & 0.00000272488  & -0.00006309364  & -0.00041014029  & 0.00005616901 \\
-6.11527496408\cdot 10^{-16}  & -0.00001418074  & 0.00002869680  & -1.20807810807\cdot 10^{-12}  & -0.00001423998  & -0.00002812073 \\
-0.00001908529  & 0.00009471169  & -1.82627377477\cdot 10^{-14}  & 0.00002355898  & -0.00006730898  & 1.93046228742\cdot 10^{-15} \\
-0.00000297095  & 0.00066178980  & 0.00000282036  & 0.00003504019  & 0.02375644411  & 0.00003236094 \\
1.88925224729\cdot 10^{-14}  & -0.00008862601  & 0.00001807875  & 1.06789333275\cdot 10^{-14}  & -0.00006331651  & 0.00002238336 \\
-0.00004067472  & 0.00000301019  & -9.94013687038\cdot 10^{-15}  & -0.00001108514  & -0.00000301927  & 3.09891987571\cdot 10^{-14} \\
0.00000301019  & -0.06718916229  & 0.00000286072  & -0.00002243655  & 0.00050351656  & 0.00002096811 \\
-9.94013687038\cdot 10^{-15}  & 0.00000286072  & -0.00003855497  & -3.90779130797\cdot 10^{-14}  & 0.00000287165  & 0.00001047733 \\
-0.00001108514  & -0.00002243655  & -3.90779130797\cdot 10^{-14}  & -0.00002712959  & 0.00001542937  & 4.95308748195\cdot 10^{-14} \\
-0.00000301927  & 0.00050351656  & 0.00000287165  & 0.00001542937  & -0.05364315613  & 0.00001449165 \\
3.09891987571\cdot 10^{-14}  & 0.00002096811  & 0.00001047733  & 4.95308748195\cdot 10^{-14}  & 0.00001449165  & -0.00002568508 \\
0.00001920997  & 0.00000111906  & 2.94183176863\cdot 10^{-15}  & -0.00000572563  & -0.00000131717  & -1.48589541887\cdot 10^{-14} \\
0.00000301060  & 0.01951725986  & 0.00000286489  & -0.00001103849  & 0.00040280094  & 0.00001039344 \\
2.46715880694\cdot 10^{-14}  & 0.00000107186  & 0.00001825080  & 4.12671067396\cdot 10^{-14}  & 0.00000126051  & 0.00000538519 \\
0.00001662305  & 0.00000597031  & 6.81443231533\cdot 10^{-15}  & 0.00001614121  & -0.00000498558  & -8.56313692173\cdot 10^{-14} \\
-0.00000299111  & -0.00027522999  & 0.00000284750  & 0.00000806555  & 0.01654806366  & 0.00000760366 \\
1.62019892263\cdot 10^{-14}  & -0.00000562168  & -0.00001578051  & -4.16453813480\cdot 10^{-14}  & -0.00000471758  & 0.00001533483 \\
\end{array}
\right)
\end{align*}

\begin{align*}
 T_{fin,4}^{6} & =
\left(
\begin{array}{cccccc}
 0.00000125628  & -0.00084878958  & -2.83996989618\cdot 10^{-9}  & 0.00001343075  & 0.00066557241  & -0.00000184033 \\
9.07808750600\cdot 10^{-8}  & 0.00138995868  & 9.96003237305\cdot 10^{-8}  & 0.00006447176  & -0.00002363043  & -0.00006842433 \\
-2.99529358660\cdot 10^{-9}  & -0.00080678324  & 0.00000119675  & 0.00000149956  & -0.00063256170  & -0.00001282276 \\
-0.00000437207  & -0.00027291241  & -1.65191684991\cdot 10^{-12}  & 0.00000129779  & 0.00022255360  & -6.58453704807\cdot 10^{-10} \\
-0.00000039232  & -0.00008219826  & 0.00000038570  & -0.00002707375  & 0.00154868413  & -0.00002378631 \\
1.97070264514\cdot 10^{-12}  & 0.00025788023  & 0.00000415121  & -7.21116613265\cdot 10^{-10}  & 0.00021062293  & 0.00000123389 \\
0.00000406644  & -0.00001493806  & -8.14811085733\cdot 10^{-14}  & -0.00000252131  & 0.00001478127  & -4.67295111503\cdot 10^{-14} \\
0.00000066293  & 0.00436069008  & 0.00000063872  & 0.00001241640  & -0.00007109159  & -0.00001115478 \\
-6.41351673174\cdot 10^{-14}  & -0.00001417757  & 0.00000386310  & 7.05782109673\cdot 10^{-14}  & -0.00001403083  & 0.00000239297 \\
0.00002138146  & 0.00005029007  & 5.45200014040\cdot 10^{-14}  & 0.00000330787  & -0.00003888592  & 1.14232496671\cdot 10^{-13} \\
-0.00000090328  & -0.00032965896  & 0.00000086669  & -0.00000788873  & 0.00368849050  & -0.00000734314 \\
-5.54519037512\cdot 10^{-14}  & -0.00004744310  & -0.00002030190  & 8.42888401493\cdot 10^{-14}  & -0.00003674960  & 0.00000314218 \\
0.00001920997  & 0.00000301060  & 2.46715880694\cdot 10^{-14}  & 0.00001662305  & -0.00000299111  & 1.62019892263\cdot 10^{-14} \\
0.00000111906  & 0.01951725986  & 0.00000107186  & 0.00000597031  & -0.00027522999  & -0.00000562168 \\
2.94183176863\cdot 10^{-15}  & 0.00000286489  & 0.00001825080  & 6.81443231533\cdot 10^{-15}  & 0.00000284750  & -0.00001578051 \\
-0.00000572563  & -0.00001103849  & 4.12671067396\cdot 10^{-14}  & 0.00001614121  & 0.00000806555  & -4.16453813480\cdot 10^{-14} \\
-0.00000131717  & 0.00040280094  & 0.00000126051  & -0.00000498558  & 0.01654806366  & -0.00000471758 \\
-1.48589541887\cdot 10^{-14}  & 0.00001039344  & 0.00000538519  & -8.56313692173\cdot 10^{-14}  & 0.00000760366  & 0.00001533483 \\
-0.00001787015  & 0.00000149962  & 2.77623635774\cdot 10^{-14}  & -0.00000204184  & -0.00000166924  & -1.91830702198\cdot 10^{-13} \\
0.00000149962  & -0.04435472907  & 0.00000143468  & 0.00000442712  & 0.00033271917  & -0.00000420077 \\
2.77623635774\cdot 10^{-14}  & 0.00000143468  & -0.00001688740  & 2.06623856601\cdot 10^{-13}  & 0.00000159584  & 0.00000188523 \\
-0.00000204184  & 0.00000442712  & 2.06623856601\cdot 10^{-13}  & -0.00001114387  & -0.00000409665  & -2.62477024413\cdot 10^{-13} \\
-0.00000166924  & 0.00033271917  & 0.00000159584  & -0.00000409665  & -0.03758247285  & -0.00000389363 \\
-1.91830702198\cdot 10^{-13}  & -0.00000420077  & 0.00000188523  & -2.62477024413\cdot 10^{-13}  & -0.00000389363  & -0.00001049667 \\
\end{array}
\right)
\end{align*}

\normalsize

 \bibliographystyle{abbrv}
 \bibliography{references}

\begin{tabular}{l}
\textbf{Angel Castro} \\
  {\small Departamento de Matem\'aticas} \\
 {\small Universidad Aut\'onoma de Madrid} \\
 {\small Instituto de Ciencias Matem\'aticas-CSIC}\\
 {\small C/ Nicolas Cabrera, 13-15, 28049 Madrid, Spain} \\
  {\small Email: angel\_castro@icmat.es} \\
\\
\textbf{Diego C\'ordoba} \\
  {\small Instituto de Ciencias Matem\'aticas} \\
 {\small Consejo Superior de Investigaciones Cient\'ificas} \\
 {\small C/ Nicolas Cabrera, 13-15, 28049 Madrid, Spain} \\
  {\small Email: dcg@icmat.es} \\
\\
\textbf{Javier G\'omez-Serrano} \\
{\small Department of Mathematics} \\
{\small Princeton University}\\
{\small 610 Fine Hall, Washington Rd,}\\
{\small Princeton, NJ 08544, USA}\\
 {\small Email: jg27@math.princeton.edu} \\
  \\

\end{tabular}

\end{document}